\title[Weighted Generating Functions for {Type~II} Lattices and Codes]
 {Weighted Generating Functions \\ for Type~II Lattices and Codes}
\thanks{The authors thank Zachary Abel, Henry Cohn, John H.\ Conway,
 John~F.\ Duncan, Benedict~H.\ Gross, Abhinav Kumar, Barry Mazur,
 Gabriele Nebe, Ken Ono, Vera Pless, Eric~M.\ Rains, and Shrenik Shah
 for helpful comments and suggestions.  During parts of this research,
 Elkies was supported by NSF grants DMS-0501029 and DMS-1100511,
 and Kominers was supported by
 the Harvard College Program for Research in Science and Engineering (PRISE),
 a Harvard Mathematics Department Highbridge Fellowship,
 an NSF Graduate Research Fellowship,
 a Yahoo!\ Key Scientific Challenges Program Fellowship,
 and an AMS-Simons Travel Grant.}
\thanks{This work includes a part of the second author's undergraduate
 thesis~\cite{Kominers:thesis}.}
\author{Noam D.~Elkies}
\address{Department of Mathematics, Harvard University\newline\indent
 One Oxford Street\newline\indent Cambridge, MA 02138}
\email{elkies@math.harvard.edu}
\author{Scott Duke Kominers}
\address{
Becker Friedman Institute for Research in Economics\newline\indent
University of Chicago\newline\indent
1126 East 59th Street\newline\indent
Chicago, IL 60637}
\email{skominers@gmail.com}
\subjclass[2000]{Primary: 94B05; Secondary: 05B05, 11H71, 33C50, 33C55}
\keywords{Harmonic polynomial, weight enumerator, binary code,
extremal code, theta function, lattice, design, configuration result} 
\newtheorem{theorem}{Theorem}[section]
\newtheorem{lemma}[theorem]{Lemma}
\newtheorem{cor}[theorem]{Corollary}
\newtheorem{prop}[theorem]{Proposition}
\theoremstyle{definition}
\newcommand\C{\mathbb{C}}
\newcommand\F{\mathbb{F}}
\newcommand\R{\mathbb{R}}
\newcommand\Z{\mathbb{Z}}
\newcommand\somedesign{D}
\newcommand\dhpk{R}
\newcommand\up[1]{\upp{#1}'}
\newcommand\upp[1]{u_{#1}}
\newcommand\dhpkspace{\mathscr{O}}
\newcommand\proj[2]{\pi_{#1}(#2)}
\DeclareMathOperator\Sym{Sym}
\newcommand\setsep{:}
\newcommand\thmnamesep{--}
\newcommand\defeq{:=}
\newcommand\thmmulticitesep{;\ }
\newcommand\dispoplus{\bigoplus}
\newcommand\defn[1]{\emph{#1\/}}
\newcommand\earlyterm[1]{\emph{#1\/}}
\newcommand\slmat[4]{\left(\begin{smallmatrix}#1&#2\\#3&#4\end{smallmatrix}\right)}
\newcommand\sslmat[4]{\left(\begin{smallmatrix}#1&#2\\#3&#4\end{smallmatrix}\right)}
\newcommand\GL{\mathrm{GL}}
\newcommand\Slalg{\mathfrak{sl}}
\newcommand\idsl{\sslmat{1}{0}{0}{1}}
\newcommand\commut[2]{\left[#1,#2\right]}
\newcommand\inlinecup{\bigcup}
\newcommand\zonalify[2]{\mathscr{Z}\! #1}
\newcommand\polyspace{\mathscr{P}}
\newcommand\polyspaced[1]{\polyspace_{#1}}
\newcommand\dpolyspace{\mathscr{D}}
\newcommand\dpolyspaced[1]{\dpolyspace_{#1}}
\newcommand\dzpolyspace{\zonalify{\dpolyspace}{\cfixed}}
\newcommand\dzpolyspaced[1]{\dzpolyspace_{#1}}
\newcommand\dfpolyspaced[2]{\dpolyspaced{#1}^{#2}}
\newcommand\dharmspace{\dpolyspace^0}
\newcommand\dharmspaced[1]{\dharmspace_{#1}}
\newcommand\dzharmspace{\zonalify{\dharmspace}{\cfixed}}
\newcommand\dzharmspaced[1]{\dzharmspace_{#1}}
\newcommand\oper[1]{\mathsf{#1}}
\newcommand\goper[1]{\mathsf{#1}}
\newcommand\Goper[1]{\goper{G}_{#1}}
\newcommand\Xoper{\oper{X}}
\newcommand\Hoper{\oper{H}}
\newcommand\Yoper{\oper{Y}}
\newcommand\Woper{\goper{W}}
\newcommand\Wopero{\goper{V}}
\newcommand\FToper{\goper{T}}
\newcommand\pset[1]{#1'}
\newcommand\Xoperp{\pset{\Xoper}}
\newcommand\Hoperp{\pset{\Hoper}}
\newcommand\Yoperp{\pset{\Yoper}}
\newcommand\nset[1]{\widetilde{#1}}
\newcommand\FTopern{\nset{\FToper}}
\newcommand\Hopern{\nset{\Hoper}}
\newcommand\Wopern{\nset{\Woper}}
\newcommand\Woperon{\nset{\Wopero}}
\newcommand\Xoperpn{\nset{\Xoperp}}
\newcommand\Hoperpn{\nset{\Hoperp}}
\newcommand\Yoperpn{\nset{\Yoperp}}
\newcommand\OPopern{\bigl(\Woperon^{-1}\FTopern\Wopern\bigr)}
\newcommand\konstant{b}
\newcommand\act[2]{\left(#1\right)(#2)}
\newcommand\otimesdots{\otimes\cdots\otimes}
\newcommand\mono[2]{\left(\begin{smallmatrix} #1\\#2\end{smallmatrix}\right)}
\newcommand\monom[4]{\mono{#1}{#2}\otimesdots\mono{#3}{#4}}
\newcommand\monomone[2]{\mono{#1}{#2}^{\otimes n}}
\newcommand\slmatn[4]{\slmat{#1}{#2}{#3}{#4}^{\otimes n}}
\newcommand\slmate[4]{\idsl\otimesdots\slmat{#1}{#2}{#3}{#4}\otimesdots\idsl}
\newcommand\slmatone[1]{\idsl\otimesdots #1\otimesdots\idsl}
\newcommand\sumslmateone[1]{\sum\slmatone{#1}}
\newcommand\dzoharmd[2]{\dhp_{#1;#2}}
\newcommand\ft[1]{\hat {#1}}
\newcommand\htr[1]{\ft{#1}}
\newcommand\funddom[1]{\mathcal{D}}
\newcommand\gleasonphi{\varphi_8}
\newcommand\gleasonxi{\xi_{24}}
\newcommand\gleasonpsi[1]{\psi_{#1}}
 \newcommand\gleasondelta{\delta_8}
\newcommand\charf{\chi^{\phantom{D}}}
\newcommand\hammingsphere{\sigma}
\newcommand\module{V}
\newcommand\duallat{*}
\newcommand\dualcode{\bot}
\newcommand\cutoff[1]{\mathrm{t}(#1)}
\newcommand\minLoz[1]{m_0}
\newcommand\minCoz[1]{w_0}
\newcommand\monomfunct{g}
\newcommand\dhp{Q}
\newcommand\dhpdegone[2]{\dhp_{1,#1,#2}}
\newcommand\fixed[1]{\dot{#1}}
\newcommand\xfixed{\fixed{x}}
\newcommand\cfixed{\fixed{v}}
\newcommand\ccfixed{\fixed{c}}
\newcommand\wtcfixed{\wt(\cfixed)}
\newcommand\invgen[2]{\dhp_{#1,#2;\cfixed}}
\newcommand\coordset{\mathrm{C}}
\newcommand\coordone{\coordset_{1;\cfixed}}
\newcommand\coordzero{\coordset_{0;\cfixed}}
\newcommand\rnk[1]{n}
\DeclareMathOperator\minwt{min}
\DeclareMathOperator\wt{wt}
\newcommand\isectcode[2]{#1\cap #2}
\newcommand\latshell[2]{{#2}_{#1}}
\newcommand\wecoeff[2]{|\codeshell{#1}{#2}|}
\newcommand\inprodcountc[2]{N_{#1}(C;#2)}
\newcommand\codeshell[2]{{#2}_{#1}}
\newcommand\gencodeshell[2]{\mathcal{C}_{#1}(#2)}
\newcommand\coxcode{\eta}
\newcommand\tetrads[1]{\codeshell{4}{#1}}
\newcommand\tetradsyst[1]{\gencodeshell{4}{#1}}
\renewcommand\Im{\mathop{\mathrm{Im}}}
\renewcommand\Re{\mathop{\mathrm{Re}}}
\newcommand\inprodl[2]{\langle#1,#2\rangle}
\newcommand\inprodc[2]{(#1,#2)}
\newcommand\inprodch[2]{(#1,#2)}
\newcommand\inlinefrac[2]{#1/#2}
\newcommand\onevec{{\mathbf 1}}
\newcommand\ra{\rightarrow}
\newcommand\0{^{\phantom0}}
\newcommand\9{_{\phantom9}}
\DeclareMathOperator\Vol{Vol}
\DeclareMathOperator\Aut{Aut}
\newcommand\upperH{{\mathcal H}}
\newcommand\SL{\mathrm{SL}} % not sure why SDK had \Sl
\newcommand\sD{{\mathsf \Delta}}
\newcommand\sE{{\mathsf E}}
\newcommand\sF{{\mathsf F}}
\newcommand\Eis{{\mathcal E}}
\newcommand\rC{{\mathrm C}}
\newcommand\PP{{\mathscr P}}
\newcommand\del{\partial}
\newcommand\Binom[2]{\genfrac{(}{)}{0pt}{}{#1}{#2}}
\newcommand\Phat{{\widehat P}}
\newcommand\nufixed{\fixed{\nu}}
\newcommand\Beta{{\mathrm B}}
\newcommand\kwad{{\mathrm Q}}
\newcommand\Vee{{\mathrm V}}
\numberwithin{equation}{section}
\begin{document}
\begin{abstract}
  We give a new structural development of harmonic polynomials
  on Hamming space, and harmonic weight enumerators of binary
  linear codes, that parallels one approach to harmonic polynomials
  on Euclidean space and weighted theta functions of Euclidean lattices.
  Namely, we use the finite-dimensional representation theory of~$\Slalg_2$
  to derive a decomposition theorem for the spaces of discrete
  homogeneous polynomials in terms of the spaces of
  discrete harmonic polynomials, and prove a generalized
  MacWilliams identity for harmonic weight enumerators.
  We then present several applications of harmonic weight enumerators,
  corresponding to some uses of weighted theta functions:
  an equivalent characterization of \hbox{$t$-designs}, the
  Assmus\thmnamesep Mattson Theorem in the case of extremal Type~II codes,
  and configuration results for extremal Type~II codes
  of lengths $8$, $24$, $32$, $48$, $56$, $72$, and~$96$.
\end{abstract}
\maketitle

\section{Introduction}\label{sec:intro}
  A well-known and fruitful analogy relates
lattices~$L$ in Euclidean space $\R^n$
with linear codes~$C$\/ in binary Hamming space $\F_2^n$.
(See for instance \cite{Ebeling:lattices}, \cite{NDE:Notices},
and \cite[3.2]{SPLAG}.)
Under this analogy the theta function
\begin{equation}
\Theta_L(q) = \sum_{v\in L} q^{\inprodl{v}{v}/2}
% \xxx{yipes, overloaded $n$ in the first few lines of text!
% Maybe change $n$ to $k$ in this last equation?  ---NDE}\\
%
%  = \sum_{n\geq 0} \left(\sum_{\inprodl{v}{v}=2n} 1\right) q^n,
           = \sum_{k\geq 0} \left(\sum_{\inprodl{v}{v}=2k} 1\right) q^k,
\label{eq:Theta}
\end{equation}
a generating function that counts vectors $v\in L$ in spheres
$\{v \setsep \inprodl{v}{v}=2k \}$ about the origin, corresponds to
the weight enumerator
\begin{equation}
W_C(x,y) = \sum_{c\in C} x^{n-\wt(c)}y^{\wt(c)}
	 = \sum_{w=0}^n \left(\sum_{\wt(c)=w} 1 \right) x^{n-w}y^w,
\label{eq:W_C}
\end{equation}
a generating function that counts words $c\in C$ in Hamming spheres
$\{c \setsep \wt(c) = w \}$ about the origin.
This paper concerns a generalization of
$\Theta_L$ and $W_C$ that can be used not only to count
lattice or code elements in each sphere,
by summing the constant function~$1$
as in~\eqref{eq:Theta} and~\eqref{eq:W_C},
but also to measure their distribution,
by summing a suitable nonconstant function~$P$.
In the lattice case, $P$\/ is a harmonic polynomial on~$\R^n$,
yielding the weighted theta function
\begin{equation}
\Theta_{L,P}(q) = \sum_{v\in L} P(v) q^{\inprodl{v}{v}/2}
% = \sum_{n\geq 0} \left(\sum_{\inprodl{v}{v}=2n} P(v) \right) q^n.
  = \sum_{k\geq 0} \left(\sum_{\inprodl{v}{v}=2k} P(v) \right) q^k.
\label{eq:weighted_theta}
\end{equation}
In the code case, $P$\/ is a discrete harmonic polynomial on~$\F_2^n$,
yielding the harmonic weight enumerator\footnote{
  While the analogy between $\Theta_{L,P}$ and $W_{C,P}$ suggests
  calling $W_{C,P}$ a ``weighted weight enumerator'', the comical
  juxtaposition of the two senses of ``weight'' dissuades us from
  using that phrase.  Since Bachoc~\cite{Bachoc:binary} had already
  introduced the term ``harmonic weight enumerator'' that avoids
  this juxtaposition, we happily follow her usage.
%    \xxx{Ask Bachoc if she likewise considered but rejected
%    ``weighted weight''}
%    Done.  She didn't, actually: she knew of "theta series with
%    [harmonic|spherical] coefficients".
  }
\begin{equation}
W_{C,P}(x,y) = \sum_{c\in C} P(c) x^{n-\wt(c)}y^{\wt(c)}
	 = \sum_{w=0}^n \left(\sum_{\wt(c)=w} P(c) \right) x^{n-w}y^w.
\label{eq:W_C,P}
\end{equation}
Weighted theta functions have been used extensively to study the
configurations of lattice vectors.
But discrete harmonic polynomials and harmonic weight enumerators
are relatively unknown and rarely used.  Moreover, the known
construction of discrete harmonic polynomials~$P$, and the known proofs
of the basic properties of these~$P$\/ and of the associated $W_{C,P}$
(see \cite{Delsarte:Hahn, Bachoc:binary}), involve manipulations of
intricate combinatorial sums that are considerably harder than,
and look nothing like, the developments of their Euclidean counterparts.

Here we give a structural development of discrete harmonic polynomials and
harmonic weight enumerators that parallels the more familiar theory of
harmonic polynomials on~$\R^n$ and weighted theta functions.
In each case we use an action of the Lie algebra $\Slalg_2$ on spaces of
functions on $\R^n$ (for lattices) or on $\F_2^n$ (for codes).
While the two cases
are not completely parallel, the remaining distinctions are
inherent in the structure of Euclidean and Hamming space; for instance,
homogeneous polynomials on~$\F_2^n$ cannot be defined by $P(cv) = c^d P(v)$,
and since Hamming space is finite all the representations of $\Slalg_2$
that figure in the discrete theory are finite-dimensional.
Once we have established the new approach to discrete harmonic polynomials
and harmonic weight enumerators, we use it to give cleaner derivations
of the Assmus\thmnamesep Mattson theorem~\cite{assmusmattson} and
the Koch condition~\cite{Koch87}
on the tetrad system of a Type~II code of length~24.\footnote{
  The second of these requires only the $W_{C,P}$ for $P$ of degree~$1$,
  which coincide with Ott's
  ``local weight enumerators''~\cite{Ott:local}.
  }
Finally we outline some further applications to the configurations of
minimal-weight words in extremal Type~II codes that parallel recent
configuration results for extremal Type~II lattices.

The rest of the paper is organized as follows.  We first outline the
$\Slalg_2$ approach to harmonic polynomials on~$\R^n$ and to the
construction and basic properties of weighted theta functions,
and the connection with design properties of Type~II lattices.
In the next section we review the MacWilliams identity for weight enumerators
and Gleason's theorem for the weight enumerator of a Type~II code.
In the following three sections we use the $\Slalg_2$ theory
to develop the theory of discrete harmonic polynomials~$P$,
prove the MacWilliams identity for harmonic weight enumerators $W_{C,P}$,
and study the important special case where $P$\/ is a
``zonal harmonic polynomial'' (discrete harmonic polynomial
invariant under a subgroup $S_w \times S_{n-w}$ of the group $S_n$ of
coordinate permutations of $\F_2^n$).  The next two sections relate
these polynomials with \hbox{$t$-designs} and recover the
Assmus\thmnamesep Mattson theorem for extremal Type~II codes
and the Koch condition for Type~II codes of length~$24$.
Finally we use these techniques to show for several values of~$n$
that any extremal Type~II code of length~$n$ is generated by its
words of minimal weight, again in analogy with known results for
extremal Type~II lattices.  In an Appendix, we give a direct proof
of Gleason's theorems for self-dual codes of Type~I and~II;
certain polynomials needed to describe harmonic weight enumerators
occur naturally in the course of this proof.

While the present paper considers codes only over $\F_2$,
discrete harmonic polynomials and harmonic weight enumerators generalize to
linear codes over arbitrary finite fields~$\F_q$
(see \cite{Bachoc:non-binary}).
Our development of these notions extends to that setting too,
using representations of $\Slalg_q$ \hbox{instead} of $\Slalg_2$.
This change introduces enough new complications that we defer the
analysis to a separate paper.

\section{Weighted Theta Functions and Configurations of Type~II Lattices}\label{sec:wtf_section}
%        \subsection{Lattice-Theoretic Preliminaries}
           \subsection{Lattice-Theoretic Preliminaries}
By a \defn{lattice} in Euclidean space $\R^n$ we mean a discrete subgroup
$L \subset \R^n$ of rank~$n$; equivalently, $L$ is the \hbox{$\Z$-span} of
the columns of an invertible $n \times n$ real matrix, say~$M$\/
(which does not depend uniquely on~$L$: two such matrices $M,M'$ yield
the same $L$ iff $M^{-1}M'$ has integer entries and determinant $\pm1$).
The \defn{covolume} $\Vol(\R^n/L)$ of the lattice is then
$\left|\det M\right|$.  The \defn{dual lattice} is defined by
\begin{equation}
L^* = \{ v^* \in \R^n \setsep \forall v \in L, \inprodl{v}{v^*} \in \Z \}.
\label{eq:L*}
\end{equation}
If $L$ is the \hbox{$\Z$-span} of the columns of the invertible matrix $M$
then $L^*$ is the \hbox{$\Z$-span} of the columns of the transpose
of~$M^{-1}$; in particular $\Vol(\R^n/L^*) = \Vol(\R^n/L)^{-1}$.

If $L = L^*$ then $L$ is \defn{self-dual}.
Then $\inprodl{v}{v'} \in \Z$ for all $v,v' \in L$, and the norm map
$L \ra \Z$, $v \mapsto \inprodl{v}{v}$ reduces modulo~$2$ to a
group homomorphism $L \ra \Z / 2\Z$.  The lattice is said to be
\defn{even}\/ or \defn{of Type~II}\/ if this homomorphism is trivial,
that is, if $\inprodl{v}{v} \in 2\Z$ for all $v \in L$; otherwise
$L$ is said to be \defn{odd}\/ or \defn{of Type~I}.

\subsubsection*{Examples}
For each $n \geq 1$ the lattice $\Z^n \subset \R^n$ is of Type~I.
It is the unique Type~I lattice in~$\R^n$ for $n=1$,
and unique up to isomorphism for $n \leq 8$,
but not unique for any $n \geq 9$; there are finitely many isomorphism
classes of Type~I lattices in~$\R^n$, but the number of classes
grows rapidly with~$n$ (see for instance \cite[p.~403]{SPLAG}).

If $\R^n$ contains a Type~II lattice then $n \equiv 0 \bmod 8$
(see \cite[Chapter V]{Serre:course}).  Such a lattice may be
constructed as follows.  For any $n$ let $D_n$ be the sublattice
of~$\Z^n$ consisting of all $(x_1,\ldots,x_n)$ such that
$\sum_{j=1}^n x_j \equiv 0 \bmod 2$, and let $D_n^+$ be the union of
$D_n$ and the translate of $D_n$ by the half-integer vector
$(1/2, 1/2, \ldots, 1/2)$.  Then $D_n^+$ is:
\begin{itemize}
\item a lattice if and only if $2 \mid n$,
\item self-dual if and only if $4 \mid n$, and
\item of Type~II if and only if $8 \mid n$.
\end{itemize}
For $n=8$, this lattice $D_8^+$ coincides with the Gosset root lattice $E_8$,
which is known to be the unique Type~II lattice in~$\R^8$
up to isomorphism; we give one proof of its uniquenss at the end
of this section.\footnote{
  Serre \cite[Chapter VII]{Serre:course} uses the notation $E_n$
  for our $D_n^+$ for all $n \equiv 0 \bmod 8$, but this notation
  has not been widely adopted.  For $n\equiv 4 \bmod 8$ the Type~I lattice
  $D_n^+$ is isomorphic with $\Z^n$ if and only if $n = 4$.
  }
There are two Type~II lattices for $n=16$
(namely $E_8 \oplus E_8$ and $D_{16}^+$), and $24$ for $n=24$
(the Niemeier lattices~\cite{Niemeier:24});
for large $n \equiv 0 \bmod 8$ the number is again always finite
but grows rapidly as $n \ra \infty$
(see for instance \cite[p.~50]{SPLAG}).

\subsection{Poisson Summation}
The \defn{Poisson summation formula} is a remarkable identity relating
the sum of a function~$f$ over a lattice and the sum of the
Fourier transform of~$f$\/ over the dual lattice.
We review this formula in the case of Schwartz functions,
which is all that we need.
Recall that a \defn{Schwartz function} is a $\rC^\infty$ function
$f: \R^n \ra \C$ such that $f$\/ and all its partial derivatives decay as
$o(\inprodl{x}{x}^k)$ for all~$k$\/ as $\inprodl{x}{x} \ra \infty$.
We define the {\em Fourier transform} $\hat f: \R^n \ra \C$ by
\begin{equation}
\hat f(y) = \int_{x \in \R^n} f(x) \, e^{2\pi i \inprodl{x}{y}} \, d\mu(x);
\label{eq:fourier}
\end{equation}
$\hat f$\/ is a Schwartz function if~$f$\/ is.

\begin{theorem}[Poisson Summation Formula]\label{thm:psum}
Let $L$ be any lattice in~$\R^n$.  Then 
\begin{equation}
\sum_{x\in L} f(x) = \frac1{\Vol(\R^n/L)} \sum_{y \in L^*\9} \hat f(y) 
\label{eq:poisson}
\end{equation}
for all Schwartz functions $f: \R^n \ra \C$. 
\end{theorem}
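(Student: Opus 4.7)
The plan is to prove~\eqref{eq:poisson} by periodizing $f$ along $L$ and expanding the resulting torus function as a Fourier series. Set
\begin{equation*}
F(x) \defeq \sum_{v \in L} f(x+v).
\end{equation*}
The Schwartz decay of $f$, together with that of each of its partial derivatives, ensures that this series and every term-by-term derivative converge absolutely and uniformly on compact subsets of $\R^n$; hence $F$ is smooth. A shift of summation index shows that $F$ is $L$-periodic, and so $F$ descends to a smooth function on the compact torus $\R^n/L$.

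Next I will Fourier-expand $F$ on the torus. By the defining property~\eqref{eq:L*} of $L^*$, the exponentials $\chi_y(x) \defeq e^{-2\pi i \inprodl{x}{y}}$ indexed by $y \in L^*$ are precisely the continuous characters of the compact abelian group $\R^n/L$, and they form an orthogonal basis of $L^2(\R^n/L)$ satisfying $\|\chi_y\|^2 = \Vol(\R^n/L)$ when the torus is equipped with the Haar measure inherited from Lebesgue measure on $\R^n$. The Fourier coefficient of $F$ at $y \in L^*$ is therefore
\begin{equation*}
c_y = \frac{1}{\Vol(\R^n/L)} \int_{\R^n/L} F(x) \, e^{2\pi i \inprodl{x}{y}} \, d\mu(x).
\end{equation*}
Substituting the definition of $F$ and unfolding the resulting sum of integrals over $L$-translates of a fundamental domain into a single integral over $\R^n$---an interchange of sum and integral justified by the rapid decay of $f$, and made possible by the $L$-periodicity of $\chi_y$---collapses this to $c_y = \hat f(y) / \Vol(\R^n/L)$.

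Finally I specialize to $x = 0$. Because $F$ is smooth, its Fourier coefficients $c_y$ decay faster than any polynomial in $\|y\|$, so the Fourier series of $F$ converges absolutely and pointwise to $F$ everywhere; evaluating at the origin yields
\begin{equation*}
\sum_{v \in L} f(v) = F(0) = \sum_{y \in L^*} c_y = \frac{1}{\Vol(\R^n/L)} \sum_{y \in L^*} \hat f(y),
\end{equation*}
which is~\eqref{eq:poisson}. The only real obstacle is the convergence bookkeeping: each interchange of sum with integral, derivative, or other sum must be legitimate. In every case this follows routinely from the superpolynomial decay of $f$ and $\hat f$ (so each tail sum or integral is dominated by a convergent one) together with the fact that the number of lattice points of $L$ or $L^*$ of norm at most~$R$ grows only polynomially in~$R$. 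No substantive difficulty arises beyond this technical point.
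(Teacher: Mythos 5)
Your proof is correct and follows essentially the same route as the paper's: periodize $f$ over $L$ to get a smooth function on $\R^n/L$, expand it in a Fourier series indexed by $L^*$, unfold the coefficient integral over a fundamental domain to identify each coefficient as $\hat f(y)/\Vol(\R^n/L)$, and evaluate at the origin. The convergence remarks you add are sound and only make explicit what the paper leaves implicit.
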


\begin{proof} Define $F: \R^n \ra \C$\/ by
\begin{equation*}
F(z) = \sum_{x \in L} f(x+z).
\label{eq:F}
\end{equation*}
Because $f$ is Schwartz, the sum converges absolutely
to a $\rC^\infty$ function, whose value at $z=0$ is
the left-hand side of~\eqref{eq:poisson}.
Since $F(z) = F(x+z)$ for all $z \in \R^n$ and $x \in L$,
the function descends to a $\rC^\infty$ function on $\R^n/L$,
and thus has a Fourier expansion
\begin{equation}
F(z) = \sum_{y \in L^*\9} \widehat F(-y) \, e^{2\pi i \inprodl{y}{z}},
\label{eq:F_fourier}
\end{equation}
where
\begin{equation*}
\widehat F(y) = \frac1{\Vol(\R^n/L)}
\int_{z \in \R^n\9 / L} F(z) \, e^{2 \pi i \inprodl{z}{y}} \, d\mu(z).
\label{eq:Fhat}
\end{equation*}
Note that the vectors $y \in L^*$ are exactly those for which
$e^{2 \pi i \inprodl{x}{y}}$ is well-defined on $\R^n/L$.
Now choose a fundamental domain~$R$\/ for $\R^n/L$;
for instance, let $v_1,\ldots,v_n$ be generators of~$L$ and set
$
R = \{ a_1 v_1 + \cdots + a_n v_n \setsep 0 \leq a_i < 1 \}.
$
Then we have
\begin{align*}
\Vol(\R^n/L) \widehat F(y)
& =
\int_{z \in R} F(z) \, e^{2 \pi i \inprodl{y}{z}} \, d\mu(z)
\nonumber \\
& =
\int_{z \in R} \sum_{x\in L} f(x+z) \, e^{2 \pi i \inprodl{y}{z}} \, d\mu(z)
\nonumber \\
& =
\sum_{x\in L} \int_{z \in R+x} f(z) \, e^{2 \pi i \inprodl{y}{z}} \, d\mu(z)
\nonumber \\
& =
\int_{z \in \R^n} f(z) \, e^{2 \pi i \inprodl{y}{z}} \, d\mu(z)
\, = \, \hat f(y),
\label{eq:Fhat=fhat}
\end{align*}
where we used in the last step the fact that $\R^n$ is
the disjoint union of the translates $R+x$ of $R$\/ by lattice vectors.
Thus \eqref{eq:F_fourier} becomes
\begin{equation}
F(z) = \frac1{\Vol(\R^n/L)}
  \sum_{y \in L^*\9} \hat f(-y) \, e^{2\pi i \inprodl{y}{z}}.
  \label{eq:F_fourier_f}
\end{equation}
Taking $z=0$ we obtain \eqref{eq:poisson}.
\end{proof}

\subsection{Theta Functions}
Suppose now that $q$ is a real number with $0 < q < 1$.
We may then take $f(x) = q^{\inprodl{x}{x} / 2}$ and recognize
the left-hand side of~\eqref{eq:poisson} as the sum
$\Theta_L(q)$ of~\eqref{eq:Theta}.
The Poisson summation formula then yields the following
functional equation for theta functions.

\begin{prop}\label{prop:theta_func_eq}
Let $L$ be any lattice in~$\R^n$.  Then 
\begin{equation}
\Theta\0_{L^*}(e^{-2\pi t}) =
\Vol(\R^n/L) t^{-n/2} \Theta\0_L(e^{-2\pi/t})
\label{eq:Thetadual}
\end{equation}
for all $t>0$.
\end{prop}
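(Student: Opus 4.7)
The plan is to apply the Poisson summation formula (Theorem~\ref{thm:psum}) to a suitable Gaussian. For $t>0$, let
$$f(x) = e^{-\pi t \inprodl{x}{x}} = \bigl(e^{-2\pi t}\bigr)^{\inprodl{x}{x}/2}.$$
This is manifestly a Schwartz function on $\R^n$, and by the definition~\eqref{eq:Theta} of $\Theta_L$ we have $\sum_{x \in L} f(x) = \Theta_L(e^{-2\pi t})$.

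The key computation is the Fourier transform of this Gaussian. Since $f$ factors as a product of one-variable Gaussians in any orthonormal coordinate system, it suffices to compute the one-dimensional transform $\int_{\R} e^{-\pi t x^2}\,e^{2\pi i xy}\,dx$. Completing the square in the exponent and using the standard identity $\int_{\R} e^{-\pi u^2}\,du = 1$ gives $t^{-1/2}\, e^{-\pi y^2/t}$; taking the $n$-fold product then yields
$$\ft{f}(y) = t^{-n/2}\, e^{-\pi \inprodl{y}{y}/t} = t^{-n/2} \bigl(e^{-2\pi/t}\bigr)^{\inprodl{y}{y}/2}.$$
Therefore $\sum_{y \in L^*} \ft{f}(y) = t^{-n/2}\, \Theta_{L^*}(e^{-2\pi/t})$.

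Substituting both sides into~\eqref{eq:poisson} gives
$$\Theta_L(e^{-2\pi t}) \;=\; \frac{1}{\Vol(\R^n/L)}\, t^{-n/2}\, \Theta_{L^*}(e^{-2\pi/t}),$$
which, after swapping the roles of $t$ and $1/t$ and rearranging (noting that the coefficient $\Vol(\R^n/L)$ stays put while $t^{-n/2}$ inverts), yields the desired identity~\eqref{eq:Thetadual}. There is no real obstacle here: the only non-formal ingredient is the self-duality of the Gaussian under the Fourier transform~\eqref{eq:fourier}, and one should simply be careful to track the normalization constants $t$ and $\Vol(\R^n/L)$ through the substitution $t \mapsto 1/t$.
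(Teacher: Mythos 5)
Your proof is correct and follows essentially the same route as the paper's: apply Poisson summation (Theorem~\ref{thm:psum}) to a Gaussian, using the fact that its Fourier transform is again a Gaussian with $t$ replaced by $1/t$ and a factor of $t^{-n/2}$. The only cosmetic difference is that the paper takes $f(x) = e^{-\pi \inprodl{x}{x}/t}$ so that \eqref{eq:Thetadual} drops out of \eqref{eq:poisson} directly, whereas you take $f(x) = e^{-\pi t \inprodl{x}{x}}$ and then substitute $t \mapsto 1/t$ at the end; the rearrangement you describe is carried out correctly.
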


\begin{proof}
Let $f(x) = \exp(-\pi\inprodl{x}{x}/t)$ in~\eqref{eq:poisson}.
We claim that
\begin{equation}
\hat f(y) = t^{n/2} \exp(-\pi\inprodl{y}{y}t).
\label{eq:Gaussian-hat}
\end{equation}
Indeed, choosing any orthonormal coordinates $(x_1,\ldots,x_n)$ for $\R^n$,
we see that the integral \eqref{eq:fourier} defining $\hat f(y)$
factors as
$$
\prod_{j=1}^n
  \int_{-\infty}^\infty \, e^{-\pi x_j^2/t} e^{2\pi i x_j y_j} \, dx_j,
$$
which reduces our claim to the case $n=1$,
which is the familiar definite integral
$$
 \int_{-\infty}^\infty e^{-\pi x^2/t} \, e^{2\pi i x y} \, dx
 = t^{1/2} e^{-\pi t y^2}
$$
(see for instance \cite[Example 9.43, pp.~237--238]{Rudin} or
\cite[Lemma~50.2(i), \hbox{pp.~246--247}]{Korner}).
Using these $f$\/ and $\hat f$\/
in the Poisson summation formula~\eqref{eq:poisson}
we deduce the functional equation~\eqref{eq:Thetadual}.
\end{proof}

Now suppose $L$ is a Type~II lattice.  Then $L^* = L$, so
the functional equation relates $\Theta\0_L$ to itself,
and $\Vol(\R^n/L) = 1$.
Moreover, each of the exponents $\inprodl{v}{v}/2$
occurring in the formula~\eqref{eq:Theta} is an integer,
so $\Theta_L(q)$ is a power series in~$q$ and extends to
a function on the unit disc $|q|<1$ in~$\C$.  Thus by analytic continuation
the identity $\Theta\0_L (e^{-2\pi t}) = t^{-n/2} \Theta\0_L(e^{-2\pi/t})$
holds for all $t \in \C$ of positive real part.
But $\Theta\0_L (e^{-2\pi t})$, being a power series in~$e^{-2\pi t}$,
is also invariant under $t \mapsto t+i$.
This leads us to define the function
\begin{equation}
\theta\0_L(\tau) := \Theta\0_L(e^{2\pi i \tau})
     = \sum_{v\in L} e^{\pi \inprodl{v}{v} i \tau}
\label{eq:theta}
\end{equation}
for $\tau$ in the Poincar\'e upper half-plane
$$
\upperH := \{\tau \in \C \setsep \Im(\tau) > 0\} .
$$
Then $\theta\0_L(\tau) = \theta\0_L(\tau + 1)$, and the Poisson identity
gives $\theta\0_L(\tau) = t^{-n/2} \theta\0_L(-1/\tau)$: the expected
factor of $i^{n/2}$ disappears because $n \equiv 0 \bmod 8$
for all Type~II lattices.  It follows that
\begin{equation}
\theta\0_L(\tau)
= (c\tau+d)^{-n/2} \, \theta\0_L\left(\frac{a\tau+b}{c\tau+d}\right)
\label{eq:theta_form}
\end{equation}
for all $\slmat{a}{b}{c}{d}$ in the subgroup of $\SL_2(\R)$
generated by $\slmat{1}{1}{0}{1}$ and $\slmat{0}{-1}{1}{0}$.
This subgroup is the full modular group $\SL_2(\Z)$ of integer matrices
of determinant~$1$.  (See \cite[Chapter VII]{Serre:course} for this
and the remaining results noted in this paragraph.)
The identity~\eqref{eq:theta_form} for all such $\slmat{a}{b}{c}{d}$,
together with the fact that $\theta\0_L(\tau)$ remains bounded as
$\Im(\tau) \ra \infty$ (because then $q \ra 0$),
then shows that $\theta\0_L$ is a modular form of weight $n/2$ for
$\SL_2(\Z)$.  Since $n/2 \equiv 0 \bmod 4$, this means that
$\theta\0_L$ is a polynomial in the normalized Eisenstein series
$$
\Eis_4 = \theta\0_{E_8}(\tau) = 1 + 240 \sum_{n=1}^\infty \frac{n^3 q^n}{1-q^n}
= 1 + 240 q + 2160 q^2 + 6720 q^3 + \cdots
$$
of weight~$4$ (where again $q = e^{2 \pi i \tau}$)
and the cusp form\footnote{
  That is, a modular form vanishing at all the cusps; for $\SL_2(\Z)$
  there is only one cusp, at $\Im(\tau) \ra \infty$,
  so a modular form in $\SL_2(\Z)$ is a cusp form if and only if
  its expansion as a power series in~$q$ has constant coefficient zero.
  Note that the notation of~\cite{Serre:course} diverges from the
  usual practice that we follow: our $\Eis_4$, $\Eis_6$, and $\Delta$
  are what Serre calls $E_2$, $E_3$ and $(2\pi)^{-12} \Delta$.
  (We use ``$\Eis$\/'' rather than ``$E$\,''
  to avoid confusion with the $E_8$ lattice.)
  }
$$
\Delta(\tau) = q \prod_{n=1}^\infty (1-q^n)^{24}
=  q - 24 q^2 + 252 q^3 - 1472 q^4 \cdots
$$
of weight~$12$.  Moreover the coefficient of $\Eis_4^{n/8}$
in this polynomial equals~$1$ because that coefficient is the
constant coefficient in the \hbox{$q$-expansion}, which is the
number of lattice vectors of norm~zero.

It follows for example that if $n=8$ or $n=16$ then
$\theta\0_L = \Eis_4^{n/8}$,
while if $n=8m$ with $m=3$, $4$, or~$5$
and $L$ contains no vectors $v$ with $\inprodl{v}{v} = 2$ then
$\theta\0_L = \Eis_4^m - 240m \theta_{E_8}^{m-3} \Delta$
(so for example the $q^2$ coefficient is $720m(211-40m) > 0$
and $L$ has that many vectors $v$ with $\inprodl{v}{v} = 4$).
It is known that such $L$ are unique up to isomorphism for $n=8$ and $n=24$
(the $E_8$ and Leech lattices respectively),
but there are two choices for $n=16$,
and literally millions for $n=32$ (see \cite{King:32})
and many more for $n=40$,
all with the same number of vectors of norm $2k$ for each~$k$.

More generally, given any $n=8m$
the theta series of any Type~II lattice $L$ can be written uniquely as
$\Eis_4^m + \sum_{k=1}^{\lfloor m/3 \rfloor} a_k \Delta^k \Eis_4^{m-3k}$
for some $a_k$.  If $L$ contains no vectors $v$ with
$0 < \inprodl{v}{v} \leq 2 \lfloor m/3 \rfloor$ then the $a_k$ are
uniquely determined by induction, and thus
all such lattices have the same theta series.
Such lattices~$L$ are known as \defn{extremal lattices},
and their common theta function $\theta\0_L$ is the
\defn{extremal theta function}.  
Siegel \cite{Siegel:extremal} proved that the
$q^{\lfloor m/3 \rfloor + 1}$ coefficient of $\theta\0_L$ is positive,
{}from which Mallows, Odlyzko, and Sloane~\cite{MallowsOdlyzkoSloane}
deduced that a Type~II lattice $L \subset \R^n$
has minimal norm at most $2(\lfloor m/3 \rfloor + 1)$,
with equality if and only if $L$ is extremal.

         \subsection{The Spaces of Harmonic Polynomials}
           Let $\PP$\/ be the $\C$-vector space of polynomials on~$\R^n$, and
$\PP_d$ ($d=0,1,2,\ldots$) its subspace of homogeneous polynomials
of degree~$d$, so that $\PP = \bigoplus_{d=0}^\infty \PP_d$.
The {\em Laplacian} is the differential operator defined by\footnote{
  The use of $\sD$ for this operator and $\Delta$ for the modular form
  $\eta^{24} = q \prod_{n=1}^\infty (1-q^n)^{24}$ may be unfortunate,
  but should not cause confusion, despite the similarity between the
  two symbols, because they never appear together outside this footnote.
  The alternative notation $L$ for the Laplacian would be much worse,
  as we regularly use $L$ for a lattice.
  }
\begin{equation}
\sD = \sum_{j=1}^n \frac{\del^2}{\del x_j^2} : \
\rC^\infty(\R^n) \ra \rC^\infty(\R^n), \quad
\PP \ra \PP, \quad
\PP_d \ra \PP_{d-2}.
\label{eq:sDelta}
\end{equation}
Here $x_1,\ldots,x_n$ are any orthonormal coordinates on~$\R^n$,
and $\PP_d$ is taken to be $\{0\}$ for $d<0$.
The space of \defn{harmonic polynomials} of degree~$d$\/ is then
\begin{equation}
\PP_d^0 := \ker(\sD : \PP_d \ra \PP_{d-2});
\label{eq:PP0d}
\end{equation}
this is the degree-$d$ homogeneous part of
\begin{equation}
\PP^0 := \bigoplus_{d=0}^\infty \PP_d^0 = \ker(\sD : \PP \ra \PP).
\label{eq:PP0}
\end{equation}
For example, $\PP_0^0$ and $\PP_1^0$ are the spaces of
constant and linear functions respectively, of dimensions~$1$ and~$n$;
and a quadratic polynomial $P = \sum_{1\leq j \leq k \leq n} a_{jk} x_j x_k$
is harmonic if and only if $\sum_{j=1}^n a_{jj} = 0$,
because $\sD P$\/ is the constant polynomial $2\sum_{j=1}^n a_{jj}$.

It is well known, and we shall soon demonstrate, that
$\sD : \PP_d \ra \PP_{d-2}$ is surjective, whence
\begin{equation}
\dim(\PP_d^0) = \dim(\PP_d) - \dim(\PP_{d-2})
= \Binom{n+d-1}{d} - \Binom{n+d-3}{d}.
\label{eq:dimPP0}
\end{equation}
We shall use two further operators on $\rC^\infty(\R^n)$ and on
its subspace~$\PP$.  The first is
\begin{equation}
\sE := x \cdot \nabla = \sum_{j=1}^n x_j \frac{\del}{\del x_j}.
\label{eq:sE}
\end{equation}
Euler proved that
if $P \in \rC^\infty(\R^n)$ is homogeneous of degree~$d$\/
then $\sE P = d \cdot P$\/; in particular $\PP_d$ is the
\hbox{$d$-eigenspace} of~$\sE|_\PP$.
The second operator is multiplication by the norm:
\begin{equation}
\sF := \inprodl{x}{x} = \sum_{j=1}^n x_j^2 : \
P \mapsto \inprodl{x}{x} P.
\label{eq:sF}
\end{equation}
Clearly $\sF$ injects each $\PP_d$ into $\PP_{d+2}$.
Thus $\PP_d^0 = \ker(\sF\sD : \PP_d \ra \PP_d)$; that is,
$\PP_d^0$ is the zero eigenspace of the operator $\sF\sD$
on~$\PP_d$.
We next show that the other eigenspaces are $\sF^k \PP_{d-2k}^0$
for $k = 1, 2, \ldots, \lfloor d/2 \rfloor$,
and that $\PP_d$ is the direct sum of these eigenspaces, from which
the surjectivity of $\sD : \PP_d \ra \PP_{d-2}$
will follow as a corollary.

We begin with by finding the commutators of $\sD,\sE,\sF$.
Recall that the \defn{commutator}
of any two operators $A,B$\/ on some vector space is
\begin{equation*}
[A,B] = AB-BA = -[B,A].
\end{equation*}
For example,
$[x_j,x_k] = [\del / \del x_j, \del /  \del x_k] = 0$ for all $j,k$,
while $[\del / \del x_j, x_k] = \delta_{jk}$ (Kronecker delta).
Applying these formulas repeatedly, we obtain the commutation relations
\begin{equation}
[\sD,\sF] = 4 \sE + 2n,
\quad
[\sE,\sD] = -2 \sD,
\quad
[\sE,\sF] = 2 \sF.
\label{eq:DEFcom}
\end{equation}
This suggests the commutation relations
\begin{equation}
\label{eq:slcommrel}
\commut{\Xoper}{\Yoper}=\Hoper, \quad
\commut{\Hoper}{\Xoper}=2\Xoper,
\quad \commut{\Hoper}{\Yoper}=-2\Yoper
\end{equation}
satisfied by the standard basis
\begin{equation}
(\Xoper,\Hoper,\Yoper) = \bigl(
  \slmat{0}{1}{0}{0}, \,
  \slmat{1}{0}{0}{-1}, \,
  \slmat{0}{0}{1}{0}
\bigr)
\label{eq:sl2}
\end{equation}
of $\Slalg_2$.  Indeed (\ref{eq:DEFcom}) is tantamount to
an isomorphism of Lie algebras from $\Slalg_2$ to the span of
$\{\sD, \sE + \frac{n}{2}, \sF\}$ that takes $(\Xoper,\Hoper,\Yoper)$
to $(\frac1{2\varpi} \sD, -(\sE + \frac{n}{2}), -\frac\varpi2\sF)$
for some nonzero $\varpi$ (all choices of $\varpi$ are equivalent via
conjugation by diagonal matrices; later the choice $\varpi = 2\pi$
will be most natural for us).  Some steps in the following analysis
are familiar from the representation theory of~$\Slalg_2$, though here
only infinite-dimensional representations arise.

Now suppose $P \in \PP_d$ is in the $\lambda$-eigenspace of $\sF\sD$
for some~$\lambda$.  Then $\inprodl{x}{x} P = \sF P$\/ is in the
$(\lambda + 4d + 2n)$-eigenspace of $\sF\sD$ acting on $\PP_{d+2}$,
because
$$
\sF\sD\sF P = \sF(\sF\sD + [\sD,\sF]) P
= \sF(\sF\sD + 4\sE + 2n) P
= \sF(\lambda + 4d + 2n) P.
$$
By induction on $k=0,1,2,\ldots$ it follows that $\sF^k P$\/
is an eigenvector of $\sF\sD |_{\PP_{d+2k}}$ with eigenvalue
$$
\lambda + \sum_{j=0}^{k-1} 4(d+2j) + 2n
= \lambda + k \bigl( 4(d+k-1) + 2n \bigr).
$$
Replacing $d$\/ by $d-2k$\/ and taking $\lambda = 0$,
we see that if $P \in \PP_{d-2k}^0$ then $\sF^k P$\/
is an eigenvector of $\sF\sD |_{\PP_d}$ with eigenvalue
\begin{equation*}
\lambda_d(k) := k \bigl( 4(d-k-1) + 2n \bigr).
\label{eq:lambdak}
\end{equation*}
We next prove that this accounts for all the eigenspaces of
$\sF\sD|_{\PP_d}\0$.

\begin{lemma}
\label{lemma:easy}
Fix $d \geq 0$\,.  For integers $k,k'$ such that
$0 \leq k < k' \leq d/2$ we have $\lambda_d(k) < \lambda_d(k')$.
\end{lemma}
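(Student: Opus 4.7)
The plan is to prove strict monotonicity of $\lambda_d$ on the integers $\{0, 1, \ldots, \lfloor d/2 \rfloor\}$ by a direct finite-difference computation, and then conclude by telescoping. The statement is essentially an elementary inequality about a quadratic in $k$, so there is no substantive obstacle; the work is just to organize the calculation cleanly.

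First I would write out the first difference. From the definition $\lambda_d(k) = k\bigl(4(d-k-1) + 2n\bigr)$ I would compute
\begin{equation*}
\lambda_d(k+1) - \lambda_d(k)
= (k+1)\bigl(4(d-k-2) + 2n\bigr) - k\bigl(4(d-k-1) + 2n\bigr),
\end{equation*}
which simplifies, after cancellation of the $k \cdot 2n$ terms and expansion of the rest, to
\begin{equation*}
\lambda_d(k+1) - \lambda_d(k) = 4\bigl(d - 2(k+1)\bigr) + 2n.
\end{equation*}

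Next I would observe that whenever $k+1 \leq d/2$ (equivalently $2(k+1) \leq d$), the first summand is non-negative, and since $n \geq 1$ (indeed $n$ is the dimension of the ambient Euclidean space, and in the applications to Type~II lattices $n \equiv 0 \bmod 8$), the second summand $2n$ is strictly positive. Hence $\lambda_d(k+1) > \lambda_d(k)$ whenever $k+1 \leq d/2$.

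Finally, given $0 \leq k < k' \leq d/2$, I would telescope:
\begin{equation*}
\lambda_d(k') - \lambda_d(k) = \sum_{j=k}^{k'-1}\bigl(\lambda_d(j+1) - \lambda_d(j)\bigr),
\end{equation*}
where every summand satisfies $j+1 \leq k' \leq d/2$ and is therefore strictly positive by the previous step. This gives $\lambda_d(k') > \lambda_d(k)$, completing the proof. The only mildly subtle point is making sure the inequality $k+1 \leq d/2$ is available for each $j$ in the telescoping range, which is immediate from $j+1 \leq k' \leq d/2$; there is no real obstacle beyond bookkeeping.
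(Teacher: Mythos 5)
Your proof is correct and is essentially identical to the paper's: both reduce to the consecutive case $k' = k+1$, compute the first difference $\lambda_d(k+1)-\lambda_d(k) = 4\bigl(d-2(k+1)\bigr)+2n$, and bound it below by $2n>0$ using $k+1\leq d/2$. The telescoping step is just the paper's "by induction" made explicit.
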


\begin{proof} By induction it is enough to check this for $k'=k+1$.
We compute
$$
\lambda_d(k+1) - \lambda_d(k) = 2n + 4(d-2k') \geq 2n > 0,
$$
as claimed.
\end{proof}

\begin{cor}
\label{cor:easy}
The sum of the subspaces $\sF^k \PP_{d-2k}^0$ of $\PP_d$
over $k=0,1,\ldots,\lfloor d/2 \rfloor$ is direct.
\end{cor}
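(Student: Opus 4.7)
The plan is to deduce this directly from the eigenvalue computation preceding the lemma together with Lemma~\ref{lemma:easy}. Specifically, the paragraph above Lemma~\ref{lemma:easy} established that for each $k$ with $0 \leq k \leq \lfloor d/2 \rfloor$ and each $P \in \PP_{d-2k}^0$, the polynomial $\sF^k P$ lies in $\PP_d$ and is an eigenvector of $\sF\sD|_{\PP_d}$ with eigenvalue $\lambda_d(k)$. Hence the entire subspace $\sF^k \PP_{d-2k}^0$ is contained in the $\lambda_d(k)$-eigenspace of $\sF\sD|_{\PP_d}$.

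By Lemma~\ref{lemma:easy}, the scalars $\lambda_d(0),\lambda_d(1),\ldots,\lambda_d(\lfloor d/2 \rfloor)$ are pairwise distinct. So I would invoke the standard fact from linear algebra that eigenspaces of a single linear operator attached to pairwise distinct eigenvalues are linearly independent: any sum of nonzero eigenvectors from distinct eigenspaces is nonzero. Applied to $\sF\sD|_{\PP_d}$, this immediately gives that the subspaces $\sF^k \PP_{d-2k}^0$, each lying in a distinct eigenspace, sum directly.

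If a self-contained argument is preferred over quoting the eigenspace fact, one can prove it by induction on the number of nonzero summands. Suppose $\sum_{k=0}^{m} \sF^k P_k = 0$ with $P_k \in \PP_{d-2k}^0$ and $m \leq \lfloor d/2 \rfloor$. Applying $\sF\sD$ yields $\sum_{k=0}^{m} \lambda_d(k) \sF^k P_k = 0$, and subtracting $\lambda_d(m)$ times the original relation kills the top term, giving $\sum_{k=0}^{m-1} (\lambda_d(k) - \lambda_d(m)) \sF^k P_k = 0$; the coefficients $\lambda_d(k) - \lambda_d(m)$ are nonzero by Lemma~\ref{lemma:easy}, so induction forces each $P_k = 0$.

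There is no real obstacle here: the work was done in computing the eigenvalues $\lambda_d(k)$ explicitly and showing them monotonic in~$k$. The corollary is essentially a packaging statement, and the only thing to double-check is that $k$ ranges exactly over $\{0,1,\ldots,\lfloor d/2 \rfloor\}$, which is guaranteed by the requirement $d-2k \geq 0$ in order for $\PP_{d-2k}^0$ to be defined as a nonzero space (and by the convention $\PP_j = \{0\}$ for $j < 0$ made earlier).
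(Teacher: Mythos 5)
Your proof is correct and follows the same route as the paper: each $\sF^k \PP_{d-2k}^0$ sits inside the $\lambda_d(k)$-eigenspace of $\sF\sD|_{\PP_d}$, and Lemma~\ref{lemma:easy} makes these eigenvalues distinct, so the sum is direct. The extra self-contained induction you sketch is a standard unwinding of the eigenspace-independence fact that the paper simply cites implicitly.
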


\begin{proof}
By Lemma~\ref{lemma:easy}, the $\lambda_d(k)$ are strictly increasing,
and thus distinct.  Our claim follows because $\sF^k \PP_{d-2k}^0$
is a subspace of the $\lambda_d(k)$ eigenspace of $\sF\sD$.
\end{proof}

\begin{prop}
\label{prop:harmdecomp}
For $k=0,1,\ldots,\lfloor d/2\rfloor$,
let $\PP_d^k = \sF^k \PP_{d-2k}^0$. Then:
\begin{enumerate}
\item The map $\sD: \PP_d \ra \PP_{d-2}$ is surjective.
\item $\PP_d = \bigoplus_{k=0}^{\lfloor d/2\rfloor} \PP_d^k
= \PP_d^0 \oplus \sF \PP_{d-2}$, and
$\PP = \bigoplus_{k=0}^\infty \sF^k\PP^0$.
\item $\PP_d^k$ is the entire $\lambda_d(k)$ eigenspace of
$\sF\sD |_{\PP_d}$, and $\sF\sD |_{\PP_d}$
has no eigenvalues other than the $\lambda_d(k)$ for
$k=0,1,\ldots,\lfloor d/2\rfloor$.
\item $\dim (\PP_d^0) = \dim(\PP_d) - \dim(\PP_{d-2})$
as claimed in~(\ref{eq:dimPP0}).
\end{enumerate}
\end{prop}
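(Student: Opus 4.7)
The plan is to prove the four assertions simultaneously by induction on~$d$. The base cases $d=0$ and $d=1$ are trivial, because $\PP_{d-2} = \{0\}$: then $\sD$ is vacuously surjective, $\PP_d^0 = \PP_d$, and the lone summand $\PP_d^0$ is the zero eigenspace of $\sF\sD$. For the inductive step at $d \geq 2$, assume the four conclusions at every smaller degree, and focus first on~(1); the remaining parts will follow quickly.

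The key observation is that surjectivity of $\sD\colon \PP_d \ra \PP_{d-2}$ reduces to invertibility of $\sD\sF$ on $\PP_{d-2}$, since $\sD(\PP_d) \supseteq \sD(\sF \PP_{d-2}) = (\sD\sF)(\PP_{d-2})$. Using the commutator $[\sD,\sF] = 4\sE + 2n$ from~(\ref{eq:DEFcom}) together with Euler's identity $\sE|_{\PP_{d-2}} = (d-2)\cdot\mathrm{id}$, we obtain
\[
\sD\sF\big|_{\PP_{d-2}}
 \;=\; \sF\sD\big|_{\PP_{d-2}} + \bigl(4(d-2)+2n\bigr)\mathrm{id}.
\]
By part~(3) of the inductive hypothesis applied at degree $d-2$, the operator $\sF\sD|_{\PP_{d-2}}$ is diagonalizable with eigenvalues $\lambda_{d-2}(k) \geq 0$. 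Since the scalar shift $4(d-2)+2n$ is strictly positive, $\sD\sF|_{\PP_{d-2}}$ has only positive eigenvalues and is therefore invertible, establishing~(1).

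From~(1), the dimension identity in~(4) follows immediately. Corollary~\ref{cor:easy} gives $\PP_d^0 \cap \sF\PP_{d-2} = \{0\}$, and combining this with~(4) and the injectivity of $\sF$ forces $\PP_d = \PP_d^0 \oplus \sF\PP_{d-2}$. Expanding $\sF\PP_{d-2}$ via the inductive form of~(2) at degree $d-2$ then yields the full decomposition $\PP_d = \bigoplus_{k=0}^{\lfloor d/2 \rfloor} \sF^k \PP_{d-2k}^0$, giving~(2) and, by reassembly over all~$d$, the statement for~$\PP$. Finally, each summand $\PP_d^k = \sF^k \PP_{d-2k}^0$ lies in the $\lambda_d(k)$-eigenspace of $\sF\sD|_{\PP_d}$ by the eigenvalue computation already performed before the lemma; the $\lambda_d(k)$ are pairwise distinct by Lemma~\ref{lemma:easy}; and these summands exhaust $\PP_d$. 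This forces each $\PP_d^k$ to be the \emph{entire} $\lambda_d(k)$-eigenspace and rules out all other eigenvalues, proving~(3).

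The main obstacle is conceptual rather than computational: we must recognize that the four parts of the proposition are entangled in a way that makes a joint induction natural, and that the surjectivity of the degree-lowering operator $\sD$ reduces to the invertibility of $\sD\sF$ on a smaller space via the single commutator identity~(\ref{eq:DEFcom}) and Euler's relation. Once this framework is in place, no step goes beyond elementary linear algebra and the $\Slalg_2$-style raising/lowering moves already introduced.
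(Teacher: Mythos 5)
Your proof is correct, but it takes a genuinely different route from the paper's. The paper proves all four parts at once by a telescoping dimension count: the sum $\bigoplus_k \sF^k\PP_{d-2k}^0$ is direct by Corollary~\ref{cor:easy}, each summand has dimension at least $\dim(\PP_{d-2k})-\dim(\PP_{d-2k-2})$ (with equality exactly when the corresponding $\sD$ is surjective), the lower bounds telescope to $\dim(\PP_d)$, and since the direct sum sits inside $\PP_d$ every inequality must be an equality --- which delivers surjectivity, the decomposition, and the dimension formula simultaneously, with no induction on~$d$. You instead run a joint induction on~$d$ and obtain surjectivity of $\sD$ from the invertibility of $\sD\sF$ on $\PP_{d-2}$, using the commutator identity~\eqref{eq:DEFcom} and Euler's relation to write $\sD\sF|_{\PP_{d-2}}$ as $\sF\sD|_{\PP_{d-2}}$ plus the positive scalar $4(d-2)+2n$, whose spectrum is then strictly positive because the inductive hypothesis pins the eigenvalues of $\sF\sD|_{\PP_{d-2}}$ at the nonnegative values $\lambda_{d-2}(k)$. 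This is the standard $\Slalg_2$ ``lowering operators are surjective because $\sD\sF$ has positive spectrum'' argument; it leans harder on the algebraic structure and would survive in settings without an explicit count of $\dim\PP_d$, at the cost of carrying all four statements through the induction. Two small points of bookkeeping: your appeal to ``part~(3) of the inductive hypothesis'' for diagonalizability of $\sF\sD|_{\PP_{d-2}}$ really uses parts~(2) and~(3) together (harmless, since you induct on all four parts jointly --- and in fact you only need that the eigenvalues are nonnegative, not diagonalizability, to conclude invertibility after the positive shift); and the identification $\sF\PP_{d-2}=\bigoplus_{k\geq 1}\sF^k\PP_{d-2k}^0$ needed to invoke Corollary~\ref{cor:easy} for $\PP_d^0\cap\sF\PP_{d-2}=\{0\}$ likewise rests on part~(2) at degree $d-2$, which you do cite a sentence later. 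Neither is a gap.
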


\begin{proof}
The sum $\bigoplus_{k=0}^{\lfloor d/2\rfloor} \PP_d^k$
is direct by Corollary~\ref{cor:easy}.
We prove that it equals $\PP_d$ by comparing dimensions.  Since
$\sF$ is injective we have $\dim(\PP_d^k) = \dim(\PP_{d-2k}^0)$;
moreover
$$
\dim(\PP_{d-2k}^0) \geq \dim(\PP_{d-2k}) - \dim(\PP_{d-2k-2}),
$$
with equality if and only if
$\sD : \PP_{d-2k} \ra \PP_{d-2k-2}$ is surjectve, because
$\PP_{d-2k}^0$ is the kernel of $\sD : \PP_{d-2k} \ra \PP_{d-2k-2}$.
Hence $\dim \bigl( \bigoplus_{k=0}^{\lfloor d/2\rfloor} \PP_d^k \bigr)$
is
\begin{equation}
\sum_{k=0}^{\lfloor d/2\rfloor} \dim(\PP_d^k)
= \sum_{k=0}^{\lfloor d/2\rfloor} \dim(\PP_{d-2k}^0)
\geq \sum_{k=0}^{\lfloor d/2\rfloor}
  \bigl(\dim(\PP_{d-2k}) - \dim(\PP_{d-2k-2})\bigr),
\label{eq:dimbound}
\end{equation}
and the last sum telescopes to $\dim(\PP_d)$.
Thus equality holds termwise in the last step of (\ref{eq:dimbound}) and
$\dim \bigl(\bigoplus_{k=0}^{\lfloor d/2\rfloor} \PP_d^k\bigr)
= \dim (\PP_d)$.
The first of these proves part~(1) (using the $k=0$ term).
The second yields
\begin{equation}
\PP_d = \bigoplus_{k=0}^{\lfloor d/2\rfloor} \PP_d^k,
\label{eq:PPdecomp}
\end{equation}
as claimed in part~(2); taking the direct sum over~$d$\/ yields
$\PP = \bigoplus_{k=0}^\infty \sF^k\PP^0$, also claimed in part~(2).
To complete the proof of part~(2)
we compare the decompositions~(\ref{eq:PPdecomp})
of $\PP_d$ and $\PP_{d-2}$ and note that
$\PP_d^k = \sF \PP_{d-2}^{k-1}$ for each $k>0$.
Part~(3) follows because the decomposition~(\ref{eq:PPdecomp})
diagonalizes $\sF\sD |_{\PP_d}$.  Finally part~(4) is again
the equality of the $k=0$ terms in (\ref{eq:dimbound}).
\end{proof}

\subsubsection*{Remarks}
Part (2) of Proposition \ref{prop:harmdecomp} says in effect that
$\PP = \bigoplus_{d=0}^\infty \bigl( \PP_d^0 \otimes U_{\frac{n}{2}+d} \bigr)$,
where for any real $m>0$ we write $U_m$ for
the infinite-dimensional irreducible representation of~$\Slalg_2$
with basis $\{\Yoper^k v\}_{k=0}^\infty$ where
$\Xoper v = 0$ and $\Hoper v = -mv$.  These $U_m$ come from
representations in the ``discrete series'' of unitary representations
of the Lie group $\SL_2(\R)$ when $n$ is even
(see \cite[Chapter~IX] {Lang:SL2R}); when $n$ is odd,
they come from discrete-series representations of the ``metaplectic''
double cover of $\SL_2(\R)$ that do not descend to $\SL_2(\R)$.

It also follows from part~(2) that $\PP^0_d \cap \sF \PP_{d-2} = \{0\}$,
and thus that $\PP^0$ contains no nonzero multiple of $\inprodl{x}{x}$.
Proving this was set as problem B-5 on the 2005 Putnam exam,
which was the hardest of the 12 problems that year,
solved by only five of the top 200 scorers
(see \cite[p.736 and p.~741]{Putnam05}).
The solution printed in \cite[p.~742]{Putnam05} uses some of the
ingredients used here to prove Proposition~\ref{prop:harmdecomp}.

         \subsection{Weighted Theta Functions}\label{subsec:wtf_section}
           The functional equation~\eqref{eq:Thetadual} for theta functions of lattices
extends to theta functions weighted by a harmonic polynomial.
\begin{theorem}\label{thm:thetaP_func_eq}
Let $L$ be any lattice in~$\R^n$, and $P: \R^n \ra \C$
any harmonic polynomial of degree~$d$.  Then 
\begin{equation}
\Theta\0_{L^{\kern-.2ex*}\9\!,P}(e^{-2\pi t}) =
i^d \Vol(\R^n/L) t^{-(n/2)-d} \Theta\0_{L,P}(e^{-2\pi/t})
\label{eq:ThetaPdual}
\end{equation}
for all $t>0$.
\end{theorem}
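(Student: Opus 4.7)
The plan is to imitate the proof of Proposition~\ref{prop:theta_func_eq} with the polynomial weight included, applying the Poisson summation formula (Theorem~\ref{thm:psum}) to the Schwartz function
\begin{equation*}
f(x) := P(x)\, e^{-\pi \inprodl{x}{x}/t}.
\end{equation*}
Since $\sum_{x \in L} f(x) = \Theta_{L,P}(e^{-2\pi/t})$, the functional equation~\eqref{eq:ThetaPdual} will drop out of Poisson summation once we establish the Bochner--Hecke identity
\begin{equation*}
\hat f(y) = i^d\, t^{(n/2)+d}\, P(y)\, e^{-\pi t \inprodl{y}{y}},
\end{equation*}
which extends~\eqref{eq:Gaussian-hat} from the constant polynomial $P \equiv 1$ to arbitrary $P \in \PP_d^0$.

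To derive this identity I would recycle the completion-of-the-square computation from the proof of Proposition~\ref{prop:theta_func_eq}: rewriting the exponent of $f(x) \, e^{2\pi i \inprodl{x}{y}}$ as $-(\pi/t)\inprodl{x-ity}{x-ity} - \pi t \inprodl{y}{y}$ and shifting the integration contour to $u = x - ity$ (justified because the integrand is entire with Gaussian decay) yields
\begin{equation*}
\hat f(y) = e^{-\pi t \inprodl{y}{y}} \int_{\R^n} P(u+ity)\, e^{-(\pi/t)\inprodl{u}{u}}\, d\mu(u).
\end{equation*}

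The remaining integral I would evaluate using the operator identity
\begin{equation*}
\int_{\R^n} Q(u)\, e^{-(\pi/t)\inprodl{u}{u}}\, d\mu(u) = t^{n/2}\, \bigl[e^{(t/4\pi)\sD}\, Q\bigr](0),
\end{equation*}
valid for every polynomial~$Q$\/ (the exponential is a finite sum since $\sD$ strictly lowers degree); this identity is a repackaging of the standard Gaussian moment formula $\int_\R u^{2m} e^{-\pi u^2/t}\,du = t^{m+1/2}(2m-1)!!/(2\pi)^m$ and follows by expanding~$Q$\/ in monomials and matching term-by-term. Applying it with $Q(u) = P(u+ity)$ (and using that $\sD$ commutes with translations) converts the integral to $t^{n/2}\,\bigl[e^{(t/4\pi)\sD}\,P\bigr](ity)$. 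Here the hypothesis $P \in \PP_d^0$ enters decisively: since $\sD P = 0$, the exponential collapses to~$P$ itself, and homogeneity of degree~$d$ gives $P(ity) = (it)^d P(y)$, yielding the Bochner--Hecke identity. Plugging into Poisson summation produces~\eqref{eq:ThetaPdual} after routine rearrangement.

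The main obstacle is establishing the operator identity for polynomial Gaussian moments; all other steps reduce to formal manipulations once it is in hand. It bears emphasizing that the harmonicity hypothesis cannot be relaxed: for general $P \in \PP_d$ the higher terms $\sD P,\,\sD^2 P,\,\ldots$ in $e^{(t/4\pi)\sD}\,P$ contribute lower-degree polynomials times the same Gaussian, spoiling the clean transformation in~\eqref{eq:ThetaPdual}. In the $\Slalg_2$ language of Proposition~\ref{prop:harmdecomp}, these unwanted contributions come from the non-harmonic components $\sF^k \PP_{d-2k}^0$ ($k\geq 1$) in the decomposition of $\PP_d$, and restricting to $P \in \PP_d^0$ is exactly what picks out the cleanly transforming piece.
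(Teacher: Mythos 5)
Your proposal is correct, but it reaches the key Fourier-transform identity by a genuinely different route from the paper's. The paper deduces the theorem from Theorem~\ref{thm:PGF}, which it proves with the $\Slalg_2$ machinery: Lemma~\ref{lemma:conjugation} conjugates $\sD,\sE,\sF$ by the Gaussian operator $\Goper{t}$, Lemma~\ref{lemma:fourconj} conjugates them by the Fourier transform, Proposition~\ref{prop:fourier} shows by induction on degree that $\hat f = \Goper{1/t} \Phat$ for some polynomial $\Phat$ of degree~$d$ with prescribed leading part, and an eigenvalue comparison then forces $\Phat$ to equal that leading part when $P$ is harmonic. You instead give the classical Bochner--Hecke computation: complete the square, shift the contour, and evaluate the resulting polynomial Gaussian integral via the heat operator $e^{(t/4\pi)\sD}$, which collapses to the identity precisely because $\sD P = 0$; all the individual steps (the contour shift for an entire integrand with Gaussian decay, the moment identity checked on monomials, commutation of $\sD$ with complex translations, homogeneity giving $P(ity)=(it)^d P(y)$) are sound. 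Your route is shorter and makes the role of harmonicity maximally transparent, and your closing remark about the components $\sF^k\PP_{d-2k}^0$ contributing lower-degree terms matches exactly the lower-degree pieces $\Phat_{d'}$, $d'<d$, that appear in Proposition~\ref{prop:fourier} for general $P\in\PP_d$. What the paper's longer route buys is that every step has a discrete analogue in the later coding-theoretic sections, where there is no contour to shift in $\F_2^n$ and no analogue of your heat-operator identity --- that parallelism is the point of the paper. One pedantic note on your ``routine rearrangement'': with your normalization, Poisson summation literally yields the constant $i^{-d}$ rather than $i^d$; this is harmless, since the two agree for even~$d$ and $\Theta_{L,P}$ vanishes identically for odd~$d$, and the paper's own derivation has the same feature.
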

By the Poisson summation formula, this will follow from the
following generalization of~\eqref{eq:Gaussian-hat}:
\begin{theorem}\label{thm:PGF}
Suppose that $t>0$ and $P: \R^n \ra \C$ is a harmonic polynomial on~$\R^n$
of degree~$d$.  Define a function $f: \R^n \ra \R$ by
\begin{equation}
f(x) = P(x) \, e^{-\pi \inprodl{x}{x} t}.
\label{eq:f=PG}
\end{equation}
Then the Fourier transform of~$f$\/ is
\begin{equation}
\hat f(y) = i^d  t^{-(\frac{n}{2}+d)} P(y) \, e^{-\pi \inprodl{y}{y} / t}.
\label{eq:f^=PG}
\end{equation}
\end{theorem}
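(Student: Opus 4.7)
The plan is to reduce Theorem~\ref{thm:PGF} to the single algebraic identity
\begin{equation*}
P(\del_y) \, e^{-\pi \inprodl{y}{y}/t} = \left(\frac{-2\pi}{t}\right)^{\!d} P(y) \, e^{-\pi \inprodl{y}{y}/t}
\qquad (P \in \PP_d^0),
\end{equation*}
where $P(\del_y)$ denotes the constant-coefficient differential operator obtained by substituting $\del/\del y_j$ for each $y_j$ in~$P$. Granting this, the theorem follows by standard Fourier manipulations: our convention~\eqref{eq:fourier} gives $\widehat{x_j f}(y) = (2\pi i)^{-1}\, \del \hat f(y)/\del y_j$, so for homogeneous $P$ of degree~$d$ one has $\widehat{P\cdot f}(y) = (2\pi i)^{-d}\, P(\del_y)\, \hat f(y)$. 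Applied to $f(x) = e^{-\pi \inprodl{x}{x}t}$, whose Fourier transform is $t^{-n/2}\, e^{-\pi\inprodl{y}{y}/t}$ by~\eqref{eq:Gaussian-hat} with $t$ replaced by~$1/t$, the boxed identity delivers the claim, with prefactor $(2\pi i)^{-d}(-2\pi/t)^d\, t^{-n/2} = ((-1)/i)^d\, t^{-n/2-d} = i^d\, t^{-n/2-d}$, using $1/i = -i$.

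For the boxed identity, I would use the Taylor shift $e^{\inprodl{a}{\del_y}} F(y) = F(y+a)$ with $F(y) = e^{-\pi\inprodl{y}{y}/t}$, which expands as
\begin{equation*}
e^{\inprodl{a}{\del_y}}\, e^{-\pi\inprodl{y}{y}/t}
  = e^{-\pi\inprodl{y}{y}/t}\, e^{-2\pi\inprodl{y}{a}/t}\, e^{-\pi\inprodl{a}{a}/t}.
\end{equation*}
The operator $P(\del_y)$ picks out the $a$-degree-$d$ homogeneous component of the right-hand side (after the $e^{-\pi\inprodl{y}{y}/t}$ prefactor is removed). All terms carrying a factor $\inprodl{a}{a}^k$ with $k\geq 1$ are annihilated, because
\begin{equation*}
P(\del_a)\bigl(\inprodl{a}{a}\cdot r(a)\bigr)\big|_{a=0}
  = (\sD P)(\del_a)\, r(a)\big|_{a=0} = 0
\end{equation*}
whenever $\sD P = 0$ (one checks the first equality by Leibniz, or recognizes it as the adjointness $\sF^* = \sD$ under the Fischer pairing $\langle P, Q\rangle = P(\del)\, Q\,|_0$). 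Only the degree-$d$ piece of $e^{-2\pi\inprodl{y}{a}/t}$ survives, contributing
\begin{equation*}
\frac{(-2\pi/t)^d}{d!}\, P(\del_a)\, \inprodl{y}{a}^d\,\Big|_{a=0} = \left(\frac{-2\pi}{t}\right)^{\!d} P(y),
\end{equation*}
where the last equality uses the elementary fact $P(\del_a)\inprodl{y}{a}^d|_{a=0} = d!\, P(y)$ for any $P \in \PP_d$.

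The main obstacle is the harmonic-annihilation step; the rest is bookkeeping. Its cleanest statement is the Fischer-orthogonality $\PP_d^0 \perp \sF\PP_{d-2}$, which is forced by the commutator $[\sD,\sF] = 4\sE + 2n$ from~\eqref{eq:DEFcom} together with $\sE$-eigenspace considerations---equivalently, it is a restatement of the direct-sum decomposition $\PP_d = \PP_d^0 \oplus \sF\PP_{d-2}$ of Proposition~\ref{prop:harmdecomp}. Thus the nonroutine content of Theorem~\ref{thm:PGF} over and above classical Gaussian Fourier analysis reduces to (and fits cleanly into) the $\Slalg_2$-structural theory just developed.
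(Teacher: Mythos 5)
Your proof is correct, but it takes a genuinely different route from the paper's. The paper first proves Proposition~\ref{prop:fourier} by induction on $d$ (using Lemma~\ref{lemma:fourconj}(1), the same identity $\widehat{x_j f} = (2\pi i)^{-1}\del\hat f/\del y_j$ you invoke) to show that for \emph{any} $P \in \PP_d$ the transform of $\Goper{t}P$ is $\Goper{1/t}\Phat$ with $\Phat$ of degree $\leq d$ and leading part $i^d t^{-(\frac n2+d)}P$; it then uses the conjugation formulas for $\sD,\sE,\sF$ by $\Goper{t}$ and by the Fourier transform (Lemmas~\ref{lemma:conjugation} and~\ref{lemma:fourconj}(2)) to show that when $P$ is harmonic, $\hat f$ lies in the $d$-eigenspace of $\sE + 2\pi t^{-1}\sF$, forcing $\Phat$ to be homogeneous of degree~$d$ and hence equal to its leading part. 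You instead reduce everything to the classical Hecke-type identity $P(\del_y)e^{-\pi\inprodl{y}{y}/t} = (-2\pi/t)^d P(y)e^{-\pi\inprodl{y}{y}/t}$, proved by the Taylor-shift expansion of $e^{-\pi\inprodl{y+a}{y+a}/t}$ together with the Fischer adjointness $\langle P,\sF r\rangle = \langle \sD P, r\rangle$, which kills every term carrying a factor $\inprodl{a}{a}$. Your prefactor bookkeeping $(2\pi i)^{-d}(-2\pi/t)^d t^{-n/2} = i^d t^{-n/2-d}$ checks out, as does $P(\del_a)\inprodl{y}{a}^d|_{a=0} = d!\,P(y)$. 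What your route buys is brevity and self-containedness: it needs only the adjointness of $\sF$ and $\sD$ under the Fischer pairing rather than the full eigenspace analysis. What the paper's route buys is the structural template: the chain ``Gaussian operator, conjugation of the $\Slalg_2$ generators, Fourier conjugation, eigenspace pinning'' is exactly what is replicated in Sections~\ref{sec:d_harmonic_poly_section}--\ref{sec:hwe} for the discrete case (with $\Wopern$ and $\FTopern$ in place of $\Goper{t}$ and the Fourier transform), where no Taylor-shift analogue is available. One small imprecision: it is $P(\del_a)(\cdot)|_{a=0}$, not $P(\del_y)$, that extracts the Fischer pairing with the degree-$d$ component; and the Fischer orthogonality $\PP_d^0 \perp \sF\PP_{d-2}$ is not literally ``a restatement'' of the direct-sum decomposition of Proposition~\ref{prop:harmdecomp} (orthogonality is the extra input supplied by adjointness), but the adjointness computation you cite is elementary and suffices.
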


\begin{proof}
For $t \in \C$\/ define an operator
\begin{equation}
\Goper{t} : \rC^\infty(\R^n) \ra \rC^\infty(\R^n), \quad
g \mapsto e^{-\pi t \inprodl{x}{x}} g
\label{eq:Gt}
\end{equation}
that multiplies every $\rC^\infty$ function by the Gaussian
$e^{-\pi t \inprodl{x}{x}}$;
these operators constitute a one-parameter group:
$\Goper{t} \Goper{t'} = \Goper{t+t'}$ for all $t,t'$.
We are then interested in $f = \Goper{t} P$\/ for $P \in \PP$\/
in the intersection of the kernel of $\sD$
with an eigenspace of~$\sE$.  If $P \in \PP_d$ then
\begin{equation*}
d \cdot f = \Goper{t} (d\cdot P) = \Goper{t} \sE P
= (\Goper{t} \sE \Goper{-t}) \Goper{t} P
= (\Goper{t} \sE \Goper{-t}) f,
\label{eq:conjE}
\end{equation*}
so $f$\/ is in the $d$-eigenspace of $\Goper{t} \sE \Goper{-t}$;
likewise $f \in \ker \Goper{t} \sD \Goper{-t}$.
Since our one-parameter group $\{\Goper{t}\}$ has infinitesimal generator
$-\pi\sF$, we expect that conjugation by $\Goper{t}$ will take $\sD,\sE$
to some linear combination of $\sD,\sE,\sF$.  Indeed we find the
following relations.\footnote{
  This is where it becomes natural to use $\varpi = 2\pi$ when choosing
  the images of the generators~\eqref{eq:sl2} of~$\Slalg_2$:
  conjugation by $\Goper{t}$ then takes $(\Xoper,\Hoper,\Yoper)$ to
  $(\Xoper-t\Hoper-t^2\Yoper, \Hoper+2t\Yoper, \Yoper)$; other choices
  would produce more complicated coefficients.
  }
\begin{lemma}[Conjugation of $\sD,\sE,\sF$ by $\Goper{t}$]\label{lemma:conjugation}
The operators $\Goper{t}$ commute with $\sF$, and we have
\begin{equation}
\Goper{t} \sE \Goper{-t} = \sE + 2 \pi t \sF,
\quad
\Goper{t} \sD \Goper{-t} = \sD + \pi t (4\sE + 2n) + (2 \pi t)^2 \sF.
\label{eq:DEconj}
\end{equation}
\end{lemma}%
\begin{proof} As with the commutation relations~\eqref{eq:DEFcom},
this comes down to an exercise in differential calculus.
Here we start from the fact that $\Goper{t}$ commutes with each $x_j$
while
$
\Goper{t} (\del / \del x_j) \Goper{-t} = 2 \pi t x_j + (\del / \del x_j)
$, whence the first formula in~\eqref{eq:DEconj} quickly follows,
while $\Goper{t} \sF = \sF \Goper{t}$ is immediate.
A somewhat longer computation establishes the second formula.
\end{proof}

\begin{cor}
\label{cor:lemma_conj}
The operators $\sD,\sE,\sF$ act on $\Goper{t} \PP$, and
the subspace $\Goper{t} \PP_d^0$ is the intersection of\/
$\ker(\sD + \pi t (4\sE + 2n) + (2 \pi t)^2 \sF)$ with the
$d$-eigenspace of\/ $\sE + 2 \pi t \sF$ in $\Goper{t} \PP$.
\end{cor}

We next relate the Fourier transform of a Schwartz function~$f$\/
with the Fourier transforms of its images under $\sD,\sE,\sF$.

\begin{lemma}[Conjugation of $\sD,\sE,\sF$ by the Fourier Transform]\label{lemma:fourconj}
Let $f : \R^n \ra \C$ be any Schwartz function.  Then:
\begin{enumerate}
\item For each $j=1,\ldots,n$,
the Fourier transform of $x_j f$\/ is $(2\pi i)^{-1} \del \hat f / \del y_j$,
and the Fourier transform of $\del f / \del x_j$ is $-2\pi i y_j \hat f$.
\item The Fourier transforms of $\sD f$, $(2\sE+n) f$, and $\sF f$
are respectively
$-(2\pi)^2 \sF \hat f$, $-(2\sE+n) \hat f$, and $-(2\pi)^{-2} \sD \hat f$.
\end{enumerate}
\end{lemma}

\begin{proof}
Again this is a calculus exercise, here with definite integrals.
The formula for the Fourier transform of $\del f / \del x_j$ is obtained
by integrating by parts with respect to~$x_j$.  The Fourier transform of
$x_j f$\/ can be obtained from this using Fourier inversion,
or directly by differentiation with respect to~$y_j$ of
the integral~\eqref{eq:fourier} that defines~$\hat f(y)$.
We then obtain part~(2) by iterating the formulas in part~(1) to find
the Fourier transform of $\del^2 f / \del x_j^2$,
$x_j \del f / \del x_j$, or $x_j^2 f$, and summing over~$j$.
The case of $\sE f$\/ can be explained by writing the operator $2\sE + n$
as $\sum_{j=1}^n \bigl(
  x_j (\del / \del x_j) + (\del / \del x_j) \circ x_j
\bigr)$.
\end{proof}

We use this to show that if $f \in \Goper{t} \PP$\/ then
$\hat f \in \Goper{1/t} \PP$, that is, that $\hat f$\/ is
{\em some}\/ polynomial multiplied by $e^{-\pi\inprodl{y}{y}/t}$.
More precisely:

\begin{prop}
\label{prop:fourier}
Let $t \in \C$ with $\Re(t) > 0$.
If $f = \Goper{t} P$\/ for some $P \in \PP_d$ then
$\hat f = \Goper{1/t} \Phat$\/ for some
$\Phat = \sum_{d'=0}^d \Phat_{d'}$ with each $\Phat_{d'} \in \PP_{d'}$
and $\Phat_d = i^d t^{-(\frac{n}{2}+d)} P$.
As before $t^{-(\frac{n}{2}+d)}$ denotes the $-(n+2d)$ power of
the principal square root of~$t$.
\end{prop}

\begin{proof}
We use induction on~$d$.  The base case $d=0$
is the fact that the Fourier transform of $e^{-\pi t \inprodl{x}{x}}$ is
$t^{-n/2} e^{-\pi\inprodl{y}{y}/t}$, which we showed already.  Suppose
we have established the claim for $P \in \PP_d$.  By linearity
and the fact that $\PP_{d+1}$ is spanned by its subspaces $x_j \PP_d$,
it is enough to prove the proposition with $P$\/ replaced by $x_j P$.
By the first part of Lemma~\ref{lemma:fourconj},
the Fourier transform of $\Goper{t} x_j P = x_j \Goper{t} P$\/ is
\begin{equation}
\frac1{2 \pi i} \, \frac{\del}{\del y_j} \bigr( \Goper{1/t} \Phat \bigr)
= \frac1{2 \pi i} \Goper{1/t} \Bigl(
   \frac{\del \Phat}{\del y_j} - \frac{2 \pi}{t} y_j \Phat 
  \Bigr).
\label{eq:Lemma_Fourier}
\end{equation}
By the inductive hypothesis $\Phat$\/ has degree~$d$\/
and leading part $\Phat_d = i^d t^{-(\frac{n}{2}+d)} P$.
Therefore the right-hand side of~\eqref{eq:Lemma_Fourier}
has degree $d+1$ and leading part
$$
\frac{-2 \pi t^{-1}}{2 \pi i} \Phat_d
= \frac{i}{t} \Phat_d = i^{d+1} t^{-(\frac{n}{2}+d+1)} y_j P.
$$
This completes the inductive step and the proof.
\end{proof}

To finish the proof of Theorem~\ref{thm:PGF}, suppose
$P \in \PP^0_d$ and $f(x) = P(x) \, e^{-\pi \inprodl{x}{x} t} = \Goper{t} P$.
By Corollary~\ref{cor:lemma_conj},
\begin{equation*}
(\sD + \pi t (4\sE + 2n) + (2 \pi t)^2 \sF) f = 0,
\qquad
(\sE + 2 \pi t \sF) f = d \cdot f.
\label{eq:harm_f}
\end{equation*}
Taking the Fourier transform and applying the second part of
Lemma~\ref{lemma:fourconj}, we deduce
\begin{equation*}
(-(2\pi)^2 \sF - \pi t (4\sE + 2n) - t^2 \sD) \hat f = 0,
\quad\
-\Bigl(\sE + n + \frac{t}{2\pi} \sD\Bigr) \hat f = d \cdot \hat f.
\label{eq:harm_fhat}
\end{equation*}
Eliminating $\sD \hat f$, we find
$d \cdot \hat f = (\sE + \frac{2\pi}{t} \sF) \hat f$\/;
that is, $\hat f$\/ is in the $d$-eigenspace of $\sE + 2 \pi t^{-1} \sF$.
By Proposition~\ref{prop:fourier}, we know that
$\hat f = \Goper{1/t} \Phat$\/ for some $\Phat \in \PP$.
By Lemma~\ref{lemma:conjugation}, then,
$\Phat$\/ is in the \hbox{$d$-eigenspace} of~$\sE$;
that is, $\Phat \in \PP_d$.  By Proposition~\ref{prop:fourier},
we conclude that $\Phat = i^d t^{-(\frac{n}{2}+d)} P$.
\end{proof}

We have now proven the functional equation~\eqref{eq:ThetaPdual}
for weighted theta functions $\Theta\0_{L,P}$
(Theorem~\ref{thm:thetaP_func_eq}).
This identity is trivial when $d = \deg(P)$ is odd, because then
$\Theta\0_{L,P}$ is identically zero (by cancellation of the
$v$ and $-v$ terms), but it gives new information when $d$\/ is 
even and positive.

Again we consider the special case of a Type~II lattice.
Generalizing \eqref{eq:theta}, we define
\begin{equation}
\theta\0_{L,P}(\tau) := \Theta\0_{L,P}(e^{2\pi i \tau})
     = \sum_{v\in L} P(v) e^{\pi\inprodl{v}{v} i \tau}
\label{eq:thetaP}
\end{equation}
for $\tau \in \upperH$.
Then $\theta\0_{L,P}(\tau) = \theta\0_{L,P}(\tau + 1)$,
and Theorem~\ref{thm:thetaP_func_eq} gives
$\theta\0_{L,P}(\tau) = t^{-(\frac{n}{2}+d)}\9 \theta\0_L(-1/\tau)$,
with the factor $i^d$ absorbed by the change of variable $\tau = it$
because $d$\/ is even.  It follows as before that
\begin{equation}
\theta\0_{L,P}(\tau) = (c\tau+d)^{-(\frac{n}{2}+d)}\9 \,
   \theta\0_{L,P}\left(\frac{a\tau+b}{c\tau+d}\right)
\label{eq:thetaP_form}
\end{equation}
for all $\slmat{a}{b}{c}{d} \in \SL_2(\Z)$, so $\theta\0_{L,P}$
is a modular form of weight $\frac{n}{2} + d$\/ for $\SL_2(\Z)$.
Hence $\theta\0_{L,P}$ is a polynomial in $\Eis_4$ and the
\hbox{weight-$6$} Eisenstein series
$$
\Eis_6 = 1 - 504 \sum_{n=1}^\infty \frac{n^5 q^n}{1-q^n}
= 1 - 504 q - 16632 q^2 - 122976 q^3 - \cdots.
$$
Moreover $\theta\0_{L,P}$ is a cusp form once $d>0$:
the constant coefficient is $P(0)$, which vanishes for
nonconstant homogeneous~$P$.  Hence once $d$ is positive
the polynomial giving $\theta\0_{L,P}$ in terms of $\Eis_4$ and $\Eis_6$
is divisible by $\Delta = 12^{-3} (\Eis_4^3 - \Eis_6^2)$.
(See again \cite[Chapter~VII]{Serre:course}.)

In particular $\theta\0_{L,P} = 0$ when
$\frac{n}{2} + d \in \{2, 4, 6, 8, 10, 14\}$ because in those weights
the only cusp form is the zero form.\footnote{
  In this setting $\frac{n}{2} + d$\/ cannot be as small as~$2$
  because $n \geq 8$, but the possibility of weight~$2$ arises
  in the proof of Lemma~\ref{lemma:thetaLP=0}.
  }
Likewise we have the following observation.
\begin{lemma}
\label{lemma:thetaLP=0}
Suppose $n=8m$ and $L \subset \R^n$ is an extremal lattice.
Then $\theta_{L,P} = 0$ for every nonconstant harmonic polynomial
$P$ on $\R^n$ whose degree $d$ satisfies
$4(m - 3 \lfloor m/3 \rfloor) + d \in \{2, 4, 6, 8, 10, 14\}$.
If $L \subset \R^n$ is a Type~II lattice of minimal norm $n/12$
then $\theta_{L,P} = 0$ for every harmonic polynomial
$P$ on $\R^n$ of degree~$2$.
\end{lemma}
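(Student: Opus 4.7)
The plan is to apply the modular-forms machinery just developed. By~\eqref{eq:thetaP_form}, the function $\theta_{L,P}$ is a modular form of weight $\frac{n}{2} + d = 4m + d$ for $\SL_2(\Z)$, and it is a cusp form whenever $P$ is a nonconstant homogeneous harmonic polynomial, since the constant coefficient $P(0)$ of its $q$-expansion then vanishes. The general strategy in both parts is to divide $\theta_{L,P}$ by a suitable power of $\cuspdelta$ matching the order of vanishing at the cusp, and then observe that the resulting holomorphic modular form has weight too small to be anything but zero.

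For the first assertion, extremality means $L$ has no vectors $v$ with $0 < \inprodl{v}{v} \leq 2\lfloor m/3 \rfloor$, so the $q$-expansion of $\theta_{L,P}$ vanishes to order at least $\lfloor m/3 \rfloor + 1$ at the cusp. Writing $\theta_{L,P} = \cuspdelta^{\lfloor m/3 \rfloor + 1} \cdot g$ then produces a holomorphic modular form $g$ of weight
\[
(4m + d) - 12(\lfloor m/3 \rfloor + 1) = 4(m - 3\lfloor m/3 \rfloor) + d - 12.
\]
Under the hypothesis $4(m - 3\lfloor m/3 \rfloor) + d \in \{2,4,6,8,10,14\}$, this weight lies in $\{-10, -8, -6, -4, -2, 2\}$, and as already noted $\SL_2(\Z)$ admits no nonzero modular form of negative weight or of weight~$2$; hence $g = 0$ and $\theta_{L,P} = 0$.

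For the second assertion, minimum norm $n/12 = 2m/3$ forces $3 \mid m$ and ensures $L$ has no vectors of positive norm strictly less than $2m/3$, so $\theta_{L,P}$ vanishes to order at least $m/3$ at the cusp. With $d = 2$, writing $\theta_{L,P} = \cuspdelta^{m/3} \cdot g$ yields a modular form $g$ of weight $(4m + 2) - 12(m/3) = 2$, which is likewise zero. There is no real obstacle here: both parts are almost immediate corollaries of the modular-form framework, and the only care needed is in bookkeeping the exact order of vanishing at the cusp under each hypothesis so that the weight of $g$ lands in the forbidden list.
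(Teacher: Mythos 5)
Your proof is correct and follows essentially the same route as the paper: both arguments observe that $\theta_{L,P}$ is a cusp form vanishing to high order at the cusp and divide by a power of $\cuspdelta$ to land in a space of modular forms that is zero. The only cosmetic difference is that you divide by $\cuspdelta^{\lfloor m/3\rfloor+1}$ and invoke the vanishing of holomorphic forms of weight in $\{-10,-8,-6,-4,-2,2\}$, whereas the paper divides by $\cuspdelta^{\lfloor m/3\rfloor}$ and invokes the vanishing of \emph{cusp} forms of weight in $\{2,4,6,8,10,14\}$ --- the same fact shifted by one power of $\cuspdelta$.
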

\begin{proof}
We saw already that $\theta_{L,P}$ is a cusp form.  If $L$ is
extremal, the $q^k$ coefficient of $\theta_{L,P}$ vanishes for each
$k \leq \lfloor m/3 \rfloor$.
Hence $\Delta^{\!-\lfloor m/3 \rfloor} \theta_{L,P}$
is a cusp form of weight $4(m - 3 \lfloor m/3 \rfloor) + d$,
and thus vanishes when
$4(m - 3 \lfloor m/3 \rfloor) + d \in \{2, 4, 6, 8, 10, 14\}$.
Likewise if $L$ has minimal norm $n/12$ and $P$ is a quadratic
harmonic polynomial then $\Delta^{\kern-.05ex1 - (n/24)} \theta_{L,P}$
is a cusp form of weight~$14$, so again $\theta_{L,P} = 0$.
\end{proof}

If $L$ is extremal then
Lemma~\ref{lemma:thetaLP=0} applies to $6$, $4$, or $2$ values of~$d$\/
for $n \equiv 0$, $8$, or $16 \bmod 24$ respectively.
We exploit these vanishing results in the next section.

         \subsection{Spherical $t$-Designs, Extremal Type~II Lattices, and the Venkov condition on Niemeier Lattices}
           For real $\nu>0$ let $A_\nu : \rC^\infty(\R^n) \ra \C$ be the functional
that takes any function to its average on the sphere
$\Sigma_\nu = \{ x \in \R^n \setsep \inprodl{x}{x} = \nu \}$ with respect to
the probability measure on~$\Sigma_\nu$ invariant under the orthogonal group.
For any positive integer~$t$, a (possibly empty\footnote{
  With this definition $\emptyset$ is a $t$-design for all~$t$.
  For most applications only nonempty designs are of interest;
  for instance it is only when $D$\/ is nonempty that we can divide
  both sides of \eqref{eq:tdesign_def} by $|D|$ to get
  the equivalent condition that the average of any polynomial
  of degree at most~$t$\/ over~$\Sigma_\nu$
  can be computed by averaging it over~$|D|$.
  But we allow empty designs here, and also later in the
  coding-theoretic setting, because this simplifies the statements of
  the results relating lattices with spherical designs.
  }) finite set $D \subset \R^n$ of nonzero vectors of equal norm~$\nu$
is said to be a \defn{(spherical) $t$-design} if and only if
\begin{equation}
\sum_{v \in D} P(v) = \left|D\right| \cdot A_\nu(P)
\label{eq:tdesign_def}
\end{equation}
for all $P \in \PP$ with $\deg P \leq t$.\footnote{
  See \cite{Delsarte:Hahn} for explanation of the term ``$r$-design''
  for this property.  For $D \neq \emptyset$, the $t$-design property
  is one way to make precise the idea that $D$\/ is ``well distributed''
  in~$\Sigma_\nu$, and better distributed as $t$ grows.  One application,
  and the original one according to \cite[pp.~89-90]{SPLAG}, is
  numerical integration on~$\Sigma_\nu$, using the right-hand side
  of~\eqref{eq:tdesign_def} as an approximation to the left-hand side
  even when $P$\/ is not polynomial but smooth enough to be
  well approximated by polynomials.
  }
By linearity it is enough to check this condition for $P \in \PP_d$
for each $d \leq t$, and may assume $d>0$ because
in the case $d=0$ of a constant polynomial the condition
\eqref{eq:tdesign_def} is satisfied automatically.
We next prove that it is enough to check \eqref{eq:tdesign_def}
for \emph{harmonic}\/ polynomials of positive degree.
We begin by showing that all such polynomials are in $\ker(A_\nu)$.

\begin{lemma}
\label{lemma:A(sph)=0}
If $P$ is a nonconstant harmonic polynomial then $A_\nu(P) = 0$.
\end{lemma}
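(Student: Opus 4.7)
The plan is to reduce to a homogeneous harmonic $P \in \PP_d^0$ with $d > 0$ and then extract the spherical integral from the Fourier-transform formula of Theorem~\ref{thm:PGF}. Since $\PP^0 = \bigoplus_{d \geq 0} \PP_d^0$ and $A_\nu$ is linear, I may assume $P$ is homogeneous of some positive degree~$d$; and since $P(\sqrt{\nu}\,u) = \nu^{d/2} P(u)$ by homogeneity, $A_\nu(P) = \nu^{d/2} A_1(P)$, so it further suffices to prove $A_1(P) = 0$.

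Specializing Theorem~\ref{thm:PGF} to $t = 1$, the Schwartz function $f(x) = P(x)\, e^{-\pi \inprodl{x}{x}}$ has Fourier transform $\hat f(y) = i^d P(y)\, e^{-\pi \inprodl{y}{y}}$. Evaluating both sides at $y = 0$, and using the defining formula~\eqref{eq:fourier} of the Fourier transform on the left-hand side, gives
\begin{equation*}
  \int_{\R^n} P(x)\, e^{-\pi \inprodl{x}{x}}\, d\mu(x) = \hat f(0) = i^d P(0) = 0,
\end{equation*}
since $P$ is homogeneous of positive degree.

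To relate this to $A_1(P)$, I would rewrite the left-hand integral in polar coordinates $x = ru$ with $r \geq 0$ and $u$ on the unit sphere. By homogeneity $P(ru) = r^d P(u)$, so the integral factors as a strictly positive radial Gaussian factor $\int_0^\infty r^{d+n-1} e^{-\pi r^2}\, dr$ times the surface integral of $P$ over the unit sphere with respect to its $\Orthog(n)$-invariant measure. Since the radial factor is positive, the spherical integral must vanish, which after normalization by the total surface area is precisely $A_1(P) = 0$, completing the reduction above. The argument is essentially routine once Theorem~\ref{thm:PGF} is available; the only subtleties are the homogeneity-based reductions and the polar-coordinate bookkeeping, and no serious conceptual obstacle is expected.
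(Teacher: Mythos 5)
Your proposal is correct and follows essentially the same route as the paper's own proof: both identify the Gaussian-weighted integral $\int_{\R^n} P(x)\,e^{-\pi\inprodl{x}{x}}\,d\mu(x)$ as a positive multiple of the spherical average (via homogeneity and polar coordinates), recognize it as $\hat f(0)$, and invoke Theorem~\ref{thm:PGF} to conclude it is a multiple of $P(0)=0$. The only caveat, shared implicitly with the paper, is that ``nonconstant harmonic polynomial'' must be read as homogeneous of positive degree, since the reduction to homogeneous components would fail if $P$ had a nonzero constant term.
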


\begin{proof}
Choose any $s>0$.  Since $P$\/ is homogeneous, $A_\nu(P)$ is
a positive multiple of the integral of $\Goper{s} P$ over all of~$\R^n$.
But this integral is the value of the Fourier transform of $\Goper{s} P$\/
at the origin.
By Theorem~\ref{thm:PGF} this value is some multiple of $P(0)$.
Since $d>0$ we have $P(0)=0$, so $A_\nu(P)=0$ as claimed.
\end{proof}

Thus our design criterion can be stated as follows.

\begin{lemma}
\label{lemma:design_sph}
A finite subset $D \subset \Sigma_\nu$ is a $t$-design if and only if
$\sum_{v \in D} P(v) = 0$ for all nonconstant harmonic polynomials $P$
of degree at most~$t$.
\end{lemma}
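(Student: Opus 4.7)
The plan is to prove the two directions separately, with the ``only if'' being immediate and the ``if'' following by reducing an arbitrary polynomial to its harmonic components via Proposition~\ref{prop:harmdecomp}.

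For the ``only if'' direction, suppose $D$ is a $t$-design, and let $P$ be any nonconstant harmonic polynomial of degree $\leq t$. The $t$-design condition \eqref{eq:tdesign_def} gives $\sum_{v\in D} P(v) = |D|\cdot A_\nu(P)$, and Lemma~\ref{lemma:A(sph)=0} says $A_\nu(P) = 0$. So the sum vanishes.

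For the ``if'' direction, assume $\sum_{v\in D} P(v) = 0$ for every nonconstant harmonic $P$ of degree at most~$t$. By linearity, it is enough to check \eqref{eq:tdesign_def} for homogeneous $P \in \PP_d$ with $0 < d \leq t$, the case $d=0$ being automatic. I would apply Proposition~\ref{prop:harmdecomp}(2) to write
\begin{equation*}
P = \sum_{k=0}^{\lfloor d/2 \rfloor} \inprodl{x}{x}^k \, Q_k, \qquad Q_k \in \PP_{d-2k}^0.
\end{equation*}
The key observation is that $\inprodl{x}{x}$ is identically $\nu$ on the sphere~$\Sigma_\nu$, and both $D \subset \Sigma_\nu$ and the measure defining $A_\nu$ are supported on~$\Sigma_\nu$. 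Hence
\begin{equation*}
\sum_{v\in D} P(v) = \sum_{k} \nu^k \sum_{v\in D} Q_k(v), \qquad
A_\nu(P) = \sum_{k} \nu^k A_\nu(Q_k).
\end{equation*}

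For each $k$ with $d-2k > 0$, the polynomial $Q_k$ is nonconstant harmonic of degree $\leq t$, so $\sum_{v\in D} Q_k(v) = 0$ by hypothesis and $A_\nu(Q_k) = 0$ by Lemma~\ref{lemma:A(sph)=0}; these terms drop out of both sums simultaneously. The only possibly surviving term is the $k = d/2$ term when $d$ is even, in which case $Q_{d/2}$ is a constant~$c$, contributing $\nu^{d/2} c \, |D|$ to the left side and $\nu^{d/2} c$ to $A_\nu(P)$. Either way, $\sum_{v\in D} P(v) = |D|\cdot A_\nu(P)$, completing the proof. The main conceptual step is the harmonic decomposition and the triviality of $\inprodl{x}{x}$ on the sphere; once those are in hand, the argument is purely bookkeeping, and I do not anticipate any obstacle beyond recording the case split between even and odd~$d$.
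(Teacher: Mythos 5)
Your proposal is correct and follows essentially the same route as the paper: both directions are handled identically, with the ``if'' direction using the decomposition $\PP_d = \bigoplus_k \sF^k \PP_{d-2k}^0$ from Proposition~\ref{prop:harmdecomp}(2) and the fact that $\sF$ restricts to multiplication by $\nu$ on $\Sigma_\nu$, so that the nonconstant harmonic components vanish on both sides and the constant component is automatic. The only difference is cosmetic: you spell out the simultaneous vanishing of $\sum_{v\in D} Q_k(v)$ and $A_\nu(Q_k)$ term by term, which the paper compresses into the remark that \eqref{eq:tdesign_def} ``holds for each $P_k$.''
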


\begin{proof}
The ``only if'' direction is immediate from Lemma~\ref{lemma:A(sph)=0}.
We prove the ``if'' implication.
By the second part of Proposition~\ref{prop:harmdecomp} any polynomial
of degree $d \leq t$ can be written as
$\sum_{k=0}^{\lfloor d/2 \rfloor} \sF^k P_k$ with each $P_k$
harmonic of degree $d-2k$.  It is thus enough to check
\eqref{eq:tdesign_def} for each $\sF^k P_k$.
But by hypothesis, \eqref{eq:tdesign_def} holds for each $P_k$
(including $P_{d/2}$ if $d$ is even, because then $P_k$ is constant).
Since the restriction of each $\sF^k P_k$ to $\Sigma_\nu$ is $\nu^k P_k$,
it follows that \eqref{eq:tdesign_def} holds for $\sF^k P_k$ as well,
and we are done.
\end{proof}

Combining this with Lemma~\ref{lemma:thetaLP=0}
yields the following theorem of Venkov~\cite{Venkov:reseaux},
which asserts that in an extremal or nearly extremal Type~II
lattice the vectors of each nonzero norm form a spherical design.

\begin{theorem}
\label{thm:sph_t_des}
Let $L \subset \R^n$ be a Type~II lattice with minimal norm $2k$.
Assume $r := 24k - n$ is nonnegative.  Set $t=3$ if $r=0$ and
$t = (r/2) - 1$ if $r>0$.  Then $L \cap \Sigma_\nu$ is a
\hbox{$t$-design} for every $\nu>0$.
\end{theorem}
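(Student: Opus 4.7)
The plan is to reduce the spherical design property to the vanishing of weighted theta functions, and then apply Lemma~\ref{lemma:thetaLP=0}. By Lemma~\ref{lemma:design_sph}, $L \cap \Sigma_\nu$ is a \hbox{$t$-design} for every $\nu > 0$ if and only if $\sum_{v \in L \cap \Sigma_\nu} P(v) = 0$ for every nonconstant harmonic polynomial $P$ of degree at most~$t$. For fixed such $P$, this vanishing for all $\nu > 0$ is equivalent to $\theta\0_{L,P} \equiv 0$, since $\sum_{v \in L,\,\inprodl{v}{v}=\nu} P(v)$ is precisely the coefficient of $q^{\nu/2}$ in the power series~\eqref{eq:thetaP}. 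So the task reduces to showing $\theta\0_{L,P} = 0$ for every nonconstant harmonic $P$ of degree $d \leq t$.

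For odd~$d$, this is immediate by pairing $v$ with~$-v$. For even $d > 0$, I appeal to Lemma~\ref{lemma:thetaLP=0}. Writing $n = 8m$ and $\epsilon := m - 3\lfloor m/3 \rfloor \in \{0, 1, 2\}$, the hypothesis $r = 24k - n \geq 0$ combined with the Mallows--Odlyzko--Sloane bound $2k \leq 2(\lfloor m/3 \rfloor + 1)$ leaves exactly two possibilities: either $r = 0$, forcing $3 \mid m$ and $L$ of minimal norm $n/12$, or $r > 0$, forcing $L$ extremal with $r = 24 - 8\epsilon \in \{8, 16, 24\}$. In the first case $t = 3$, so the only even degree requiring attention is $d = 2$, which is handled directly by the second assertion of Lemma~\ref{lemma:thetaLP=0}. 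In the extremal case, the first assertion of the lemma gives $\theta\0_{L,P} = 0$ whenever $4\epsilon + d \in \{2, 4, 6, 8, 10, 14\}$; a short case-by-case check for $\epsilon \in \{0, 1, 2\}$ (yielding $t = 11, 7, 3$ respectively) verifies that every positive even $d \leq t$ lies in this range.

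There is no essential obstacle: the substantive content has been absorbed into Lemma~\ref{lemma:thetaLP=0}, which exploits the absence of nonzero cusp forms for $\SL_2(\Z)$ in weights $2, 4, 6, 8, 10, 14$. The only point requiring care is the numerical verification pinning down~$t$, where the three values $t = 11, 7, 3$ in the extremal case arise as the largest for which each allowed $4\epsilon + d$ avoids weight~$12$, the weight of~$\Delta$ itself.
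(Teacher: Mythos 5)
Your proof is correct and follows essentially the same route as the paper's: reduce to the criterion of Lemma~\ref{lemma:design_sph}, dispose of odd degrees by central symmetry, and invoke Lemma~\ref{lemma:thetaLP=0} for the even degrees. The only difference is that you spell out explicitly the case analysis ($r=0$ versus $r=24-8\epsilon$ with $L$ extremal, and the verification that $4\epsilon+d$ avoids weight $12$ for all relevant $d$) that the paper compresses into the single sentence ``Lemma~\ref{lemma:thetaLP=0} applies.''
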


\begin{proof}
Because $L \cap \Sigma_\nu$ is centrally symmetric,
we need only check the criterion of Lemma~\ref{lemma:design_sph}
for $P$\/ of even degree.  For such~$P$, Lemma~\ref{lemma:thetaLP=0}
applies, so $\theta_{L,P} = 0$.  The criterion $A_\nu(P) = 0$
then holds because $A_\nu(P)$ is a coefficient of $\theta_{L,P}$.
\end{proof}

\subsubsection*{Remarks}
In general $L \cap \Sigma_\nu$ need not be a \hbox{$(t+1)$-design}:
there will be lattice norms~$\nu$ and harmonic polynomials $P$\/
of degree $t+1$ whose sum over $L \cap \Sigma_\nu$ is nonzero.
However, when $r>0$ it will be true that the sum over
$L \cap \Sigma_\nu$ of any harmonic polynomial of degree $t+3$
vanishes, because there are no nonzero cusp forms of weight~$14$.
Thus each $L \cap \Sigma_\nu$ is what Venkov~\cite{Venkov:reseaux}
called a ``\defn{$t\frac12$-design}'': a finite subset
$D \subset \Sigma_\nu$ such that $\sum_{v \in D} P(v) = 0$
for all $P \in \PP_d^0$ with either $d \leq t$ or $d = t+3$.

The fact that in each case $L \cap \Sigma_\nu$ is a \hbox{$2$-design}
already lets us deduce that if $L \cap \Sigma_\nu$ is nonempty
then it spans $\R^n$ as a vector space.
Indeed if $L \cap \Sigma_\nu$ does not span $\R^n$ then it is contained
in a hyperplane $\{ x \in \R^n \setsep \inprodl{x}{\xfixed} = 0 \}$
for some nonzero $\xfixed \in \R^n$; then we can take
$P(x) = \inprodl{x}{\xfixed}^2$ in~\eqref{eq:tdesign_def} and
observe that each of the terms $P(v)$ in the left-hand side vanishes,
while the factor $A_\nu(P)$ of the right-hand side is strictly positive,
so the remaining factor $|D|$ must vanish,
making $L \cap \Sigma_\nu = \emptyset$ as claimed.

More precise results can often be obtained when
$\nu$ equals or slightly exceeds the minimal norm,
because then any two vectors in $L \cap \Sigma_\nu$ must have
integer inner product, and only a few integers can arise,
making the condition that $L \cap \Sigma_\nu$ be a $t$-design or a
\hbox{$t\frac12$-design} particularly stringent.
We give three examples: configuration results for extremal Type~II
lattices in several dimensions, including multiples of~$24$ up to $96$,
showing that such lattices are generated by their minimal vectors;
Venkov's simplification of Niemeier's
classification of Type~II lattices in $\R^{24}$;
and a novel proof of the uniqueness of the $E_8$ lattice.

\subsubsection*{Configuration results for extremal Type~II lattices}
While a nonempty shell $L \cap \Sigma_\nu$ in an extermal lattice~$L$
must generate $\R^n$ as a vector space, it need not generate $L$ over~$\Z$:
already $(L,\nu) = (D_{16}^+,2)$ is a counterexample, since the
minimal nonzero vectors of $D_{16}^+$ generate only the
\hbox{index-$2$} sublattice $D_{16}$.  Still, for some $n$
it can be proved that every extremal lattice is generated by
its vectors of minimal norm~$2k$.  Let $L_0$ be the sublattice of $L$
generated by the minimal vectors, and assume $[L:L_0]>1$.
Then there are nonlattice vectors $\dot v \in L_0^*$, and
$\inprodl{v}{\dot v} \in \Z$ for all $v \in L \cap \Sigma_{2k}$.
If $\dot v$ has minimal norm in its coset $\bmod\,L$ then 
$\left|\inprodl{v}{\dot v}\right| \leq k$ for all such~$v$.
This together with the \hbox{$t$-design} or \hbox{$t\frac12$-design}
condition on $L \cap \Sigma_{2k}$ yields a contradiction for
several values of~$n$, proving that $L_0 = L$ for each of those~$n$.
(See \cite{Venkov:32}, \cite{Ozeki:32}, \cite{Ozeki:48},
\cite{Kominers:56+72+96}, and \cite{Elkies:40r}.)

\subsubsection*{Niemeier lattices}
Suppose $L$ is a Type~II lattice in $\R^{24}$.
Then the hypothesis of Theorem~\ref{thm:sph_t_des} is satisfied with
$r=0$ or $r=24$.  In either case we find in particular that
$L \cap \Sigma_2$ is a \hbox{$2$-design}.  But the vectors of norm~$2$
in any even lattice constitute a root system.  Venkov~\cite{Venkov:24},
used the condition that this root system be a \hbox{$2$-design}
to show {\em a priori} that it must be among the $24$ root systems
that arise for the Niemeier lattices,
and thus to considerably streamline the classification of
Type~II lattices in $\R^{24}$.

\subsubsection*{The uniqueness of $E_8$}
Finally, let $n=8$ and let $L \subset \R^8$ be any Type~II lattice.
Then $\theta_L = \Eis_4 = 1 + 240q + 2160q^2 + \cdots$,
and $L$ is automatically extremal, so in particular
$L \cap \Sigma_2$ is a \hbox{$7$-design} of size~$240$.
We shall use these facts to prove that $L \cong E_8$.
There are $2160$ vectors of norm~$4$ in~$L$;
choose one, and call it $\xfixed$.
Let $D$\/ be the \hbox{$7$-design} $L \cap \Sigma_2$.
For $j \in \Z$ let $N_j$ be the number of vectors
$x \in D$ such that $\inprodl{x}{\xfixed} = j$.
If $N_j \neq 0$, then $|j| \leq \sqrt{8}$ (by Cauchy\thmnamesep Schwarz)
and $j \in \Z$ (because $\inprodl{v}{v'} \in \Z$ for all $v,v'\in L$);
hence $j \in \{-2, -1, 0, 1, 2\}$.  Therefore
\begin{equation}
\label{eq:sum=240}
\sum_{j=-2}^2 N_j = |D| = 240.
\end{equation}
Since $D$\/ is centrally symmetric, $N_{-j} = N_j$ for each~$j$.
Finally, since $D$\/ is a \hbox{$7$-design},
\eqref{eq:tdesign_def} holds with $P(x) = \inprodl{x}{\xfixed}^d$
for each positive integer $d \leq 7$.  This is automatic for $d$\/ odd,
but for $d=2,4,6$ we get linear equations in $N_0, N_1, N_2$,
and already the $d=2$ and $d=4$ equations
together with (\ref{eq:sum=240}) let us solve for the $N_j$.  We find
\begin{equation}
\label{eq:nj_solve}
(N_{-2}, N_{-1}, N_0, N_1, N_2) = (14, 64, 84, 64, 14).
\end{equation}
(See the Remarks at the end of this section
for the evaluation of the functional $A_\nu$ on
even powers of $\inprodl{x}{\xfixed}$.)  In particular there are
$14$ vectors in~$D$, call them $v_i$ for $1 \leq i \leq 14$,
whose inner product with~$\xfixed$ is $2$.

For each $i$ we obtain a lattice vector $x_i = 2v_i - \xfixed$ that is
orthogonal to $\xfixed$ and satisfies $\inprodl{x_i}{x_i} = 4$ and
$x_i \equiv \xfixed \bmod 2L$.  For any $i$ and $i'$ we have
\begin{align*}
\inprodl{x_i}{x_{i'}} = \inprodl{2v_i-\xfixed}{2v_{i'}-\xfixed} &=
4\inprodl{v_i}{v_{i'}}
 - 2\inprodl{v_i}{\xfixed}
 - 2\inprodl{\xfixed}{v_{i'}}
 + \inprodl{\xfixed}{\xfixed}
\nonumber
\\
& = 4\inprodl{v_i}{v_{i'}} - 4 - 4 + 4
\nonumber
\\
& = 4\inprodl{v_i}{v_{i'}} - 4 \
\nonumber
\\
& \equiv 0 \bmod 4.
\label{eq:4|inprodl}
\end{align*}
Thus the vectors $x_i$ for $1 \leq i \leq 14$,
together with $\xfixed$ and $-\xfixed$, are $16$ vectors of norm~$4$,
any two of which are equal, opposite, or orthogonal.
Hence the $x_i$ together with $\pm \xfixed$ are the minimal vectors
of an isometric copy of $2\Z^8$ in~$L$.
Moreover $L$ also contains $v_i = (\xfixed + x_i)/2$,
and thus contains the \hbox{$\Z$-span} of $\xfixed$ and the $v_i$,
which is isometric with $D_8$.  But $L$ is self-dual, so
$D_8^* \subset L \subset D_8\0$.
Of the three lattices satisfying this condition,
one is $\Z^8$, which is of Type~I,
and the other two are isomorphic with $E_8$.
Therefore $L \cong E_8$, as claimed.

\subsubsection*{Remarks} A related proof, parallel to the beginning of
Conway's proof~\cite{Conway:Leech} of the uniqueness of the Leech lattice,
starts from the observation that each of the $2^8$ cosets of $2L$ in~$L$
intersects $\{ v \in L \setsep \inprodl{v}{v} \leq 4\}$ in either
$\{0\}$, a pair of minimal vectors, or at most $8$ orthogonal pairs of 
vectors of norm~$4$.  This accounts for at least
$1 + 240/2 + 2160/16 = 256 = 2^8$ cosets.  Hence equality holds
throughout, and any of the nonzero cosets that does not meet $\Sigma_2$
gives us a copy of $D_8$ in~$L$.  This approach uses only the
modularity of $\theta_L$, not of the more general $\theta_{L,P}$,
though it applies in fewer cases.
Either technique also yields the number of automorphisms of~$E_8$:
there are $2160$ choices of~$\xfixed$, and $2^7 7!$ automorphisms of
$D_8$ that fix $\xfixed$, half of which send $E_8$ to itself,
so $\left|\Aut(E_8)\right| = 2160 \cdot 2^6 7! = 696729600$.

For even $d \geq 0$, and a given vector $\xfixed$ of norm $\nufixed > 0$,
the average over $\Sigma_\nu$ of $\inprodl{x}{\xfixed}^d$ is computed
as a quotient of Beta integrals.  We find that if
$P(x) = \inprodl{x}{\xfixed}^d$ then
\begin{equation}
\label{eq:Beta}
A_\nu(P) = (\nu\nufixed)^{d/2} \frac
  {\int_0^1 u^d (1-u^2)^{(n-3)/2} \, du}
  {\int_0^1 (1-u^2)^{(n-3)/2} \, du}
 = (\nu\nufixed)^{d/2}
   \frac{\Beta\bigl((d+1)/2, (n-1)/2\bigr)}{\Beta\bigl(1/2, (n-1)/2\bigr)},
\end{equation}
where $u$ is the normalized projection
$(\nu\nufixed)^{-1/2} |\inprodl{x}{\xfixed}|$.  Thus
\begin{equation}
\label{eq:Betaprod}
A_\nu(P) = (\nu\nufixed)^{d/2} \,
 \frac1n \, \frac3{n+2} \, \frac5{n+4} \cdots \frac{d-1}{n+d-2} \,.
\end{equation}
In our case $\nu\nufixed = 2 \cdot 4 = 8$, so
$A_\nu(P) = 1$, $12/5$, $8$ for $d=2,4,6$.

Alternatively we could have applied Lemma~\ref{lemma:design_sph}
to the \defn{zonal spherical harmonics}, which are harmonic polynomials
that depend only on $\inprodl{x}{\xfixed}$.  For each degree~$d$\/
there is a one-dimensional space of zonal spherical harmonics,
proportional to a Gegenbauer orthogonal polynomial $C_m^{((n-2)/2)}(u)$
with $u = (\nu\nufixed)^{-1/2} \inprodl{x}{\xfixed}$.  This is
equivalent to using \eqref{eq:Beta} and \eqref{eq:Betaprod} for
\hbox{$t$-designs}, but for a \hbox{$t\frac12$-design} we need the
zonal spherical harmonics to exploit the vanishing of
$\sum_{v\in D} P(v)$ for $P \in \PP^0_{t+3}$.
This, too, has an analogue in the setting of discrete harmonic
polynomials, as in the proof of Theorem~\ref{thm:48+72c}
at the end of this paper.

   \section{Weight Enumerators of Binary Linear Codes}
            % this is the hadamard_section
\label{sec:hadamard_section}

\subsection{Coding-Theoretic Preliminaries}

By a \defn{(binary linear) code of length $n$} we mean a vector subspace
of the $\F_2$-vector space $\F_2^n$.  In this context, vectors of length~$n$
over~$\F_2$ are often called (binary) ``words'' of length~$n$.
The \defn{(Hamming) weight} of a word $w \in \F_2^n$,
denoted by $\wt(w)$, is the number of nonzero coordinates of~$w$,
and the \defn{(Hamming) distance} between two words $w,w' \in \F_2^n$
is $\wt(w'-w)$.
We denote by $\inprodc\cdot\cdot$ the usual bilinear pairing on~$\F_2^n$,
defined by $\inprodc{v}{w} = \sum_{j=1}^n v_j w_j$.  For a linear code
$C \subseteq \F_2^n$, the \defn{dual code} is the annihilator $C^\dualcode$
of~$C$\/ with respect to this pairing; thus
$\dim(C) + \dim(C^\dualcode) = n$ and $C^{\dualcode\dualcode} = C$\/
for every linear code $C \subseteq \F_2^n$.

If $C = C^\dualcode$ then $C$\/ is \defn{self-dual}.  Then
$\inprodc{c}{c'} = 0$ for all $c,c' \in C$, and in particular $\wt(c)$
is even for all $c \in C$\/ because $0 = \inprodc{c}{c}$ is the reduction of
$\wt(c)$ mod~$2$.  The map $\wt: C \ra \Z$ then reduces mod~$4$ to a
group homomorphism $C \ra 2\Z/4\Z$.  The code~$C$\/ is said to be
\defn{doubly even} or \defn{of Type~II} if this homomorphism is
trivial, that is, if $\inprodc{c}{c} \in 4\Z$ for all $c \in C$;
otherwise $C$\/ is said to be \defn{singly even} or \defn{of Type I}.
This notation reflects the analogy between binary linear codes and lattices.
It also respects the following construction (``Construction~A'' of
\cite{LeechSloane}; see also \cite[pp.~182--183]{SPLAG})
that associates a lattice $L_C \subset \R^n$
to any linear code $C \subseteq \F_2^n$:
\begin{equation}
\label{eq:construction_A}
L_C := \{ 2^{-1/2} v \setsep v \in \Z^n, \; v \bmod 2 \in C \} .
\end{equation}
Indeed $L_C^* = L_{C^\dualcode}\0$, so $L_C$ is self-dual if and only if
$C$\/ is, in which case $L_C$ is of Type~I or Type~II according as
$C$\/ is of Type~I or Type~II, respectively.

\subsubsection*{Examples}
If $C = C^\dualcode$ then $\dim(C) = n/2$, so $n$ is even.
For each positive even integer~$n$ there is a Type~I code of length~$n$
consisting of all $c$ such that $c_{2j-1} = c_{2j}$ for each $j \leq n/2$.
This is the unique Type~I code for $n=2$,
and is unique up to isomorphism (i.e., up to coordinate permutation)
for $n \leq 8$, but not unique for
any $n \geq 10$; and as with lattices the number of isomorphism classes
grows rapidly with~$n$.

If $\F_2^n$ contains a Type~II code then $n \equiv 0 \bmod 8$.
(This follows via Construction~A from the corresponding theorem for
lattices, but can also be proven directly.\footnote{
  Suppose $C$\/ is a self-dual code of length $n$.
  Then $C$\/ contains the \hbox{all-1s} vector $\onevec$, because
  $\inprodc{v}{v} = \inprodc{v}{\onevec}$ for all $v \in \F_2^n$, so
  $C \subseteq C^\dualcode$ implies $\onevec \in C^\dualcode$.
  Thus $C$\/ descends to a vector space of dimension $(n/2)-1$ in
  $\Vee := \{0,\onevec\}^\dualcode / \{0,\onevec\}$.  Since $2 \mid n$,
  the perfect pairing $\inprodc\cdot\cdot$ descends to a 
  perfect pairing on~$\Vee$, so a self-dual code is tantamount to
  a maximal isotropic subspace of~$\Vee$ relative to this pairing.
  If $4 \mid n$ then the map $\{0,\onevec\}^\dualcode \ra \F_2$,
  $v \mapsto (\wt(c)/2) \bmod 2$ descends to a quadratic form
  $\kwad: \Vee \ra \F_2$ consistent with that pairing.
  A Type~II code is then
  a self-dual code~$C$\/ that is totally isotropic relative to~$\kwad$.
  Such $C$\/ exists if and only if $(\Vee,\kwad)$ has Arf invariant zero.
  But the Arf invariant is $0$ or $1$ according as
  $\{v \in \Vee \colon \kwad(v) = 0\}$ has size
  $2^{n-3} + 2^{(n/2)-2}$ or $2^{n-3} - 2^{(n/2)-2}$.
  But this count is
  $(1/2) \sum_{j=0}^{n/4} {n \choose 4j}
  = (1/8) \sum_{\mu^4 = 1} (1+\mu)^n = 2^{n-3} + (1/4) \Re(1+i)^n$,
  so the result follows from the observation that $(1+i)^4 = -4$.
  })
An example is the \defn{extended Hamming code}
in $\F_2^8$: if we identify $\F_2^8$ with the space of $\F_2$-valued
functions on $\F_2^3$, the Hamming code can be constructed as the
subspace of affine-linear functions on~$\F_2^3$.  The extended
Hamming code is the unique Type~II code of length~$8$;
there are two such codes of length~$16$, nine of length~$24$,
and a rapidly growing number as $n \ra \infty$
through multiples of~$8$.

\subsection{Discrete Poisson Summation}
We define the \defn{discrete Fourier transform}
(or \defn{Hadamard transform})~$\htr{f}$ of a function
$f : \F_2^n \ra \C$ as the function on $\F_2^n$ given by
\begin{equation}
\label{eq:dft}
\htr{f}(u)=\sum_{v\in \F_2^n}(-1)^{\inprodc{u}{v}}f(v).
\end{equation}

We review the \earlyterm{discrete Poisson summation formula}, a discrete
analog of the Poisson summation formula for lattices
(Theorem~\ref{thm:psum}).  Like its lattice analog,
the discrete Poisson summation formula relates the sums of
a function to the sums of the function's discrete Fourier transform.
 Here, however, instead of considering the sums of the function and
its Fourier transform over a lattice $L\subset \R^n$ and its dual
$L^\duallat$, we consider the sums of the function and its discrete
Fourier transform over a linear code $C\subset\F_2^n$ and
over $C^\dualcode$, the dual code of $C$.

\begin{theorem}[Discrete Poisson Summation Formula]\label{thm:dpsum}
Let $C\subset \F_2^n$ be a binary linear code of length~$n$, and
let $f$ be a function from $\F_2^n$ to~$\C$.  Then
\begin{equation}\label{eq:dpsum}
\sum_{c\in C}f(c)=\frac{1}{|C^\dualcode|}\sum_{c'\in C^\dualcode}\htr{f}(c').
\end{equation}
\end{theorem}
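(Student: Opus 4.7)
The plan is to mirror the proof of the continuous Poisson summation formula (Theorem~\ref{thm:psum}): average~$f$ over the ``lattice'' $C$, descend the averaged function to the quotient group $\F_2^n/C$, expand it in a Fourier series, and identify the Fourier coefficients with values of $\htr{f}$.

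First I would define $F : \F_2^n \ra \C$ by $F(z) = \sum_{c \in C} f(c+z)$. Because $C$ is finite the sum is a finite sum with no convergence issue, and $F(c+z) = F(z)$ for every $c \in C$, so $F$ descends to a function on the finite abelian group $\F_2^n/C$, whose order is $2^n/|C| = |C^\dualcode|$. The left-hand side of~\eqref{eq:dpsum} is $F(0)$, so the goal reduces to computing $F(0)$ from its Fourier expansion on $\F_2^n/C$.

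Next I would identify the characters of $\F_2^n/C$ with $C^\dualcode$. A character of $\F_2^n$ has the form $v \mapsto (-1)^{\inprodc{u}{v}}$ for a unique $u \in \F_2^n$, and it descends to $\F_2^n/C$ precisely when $\inprodc{u}{c} = 0$ for all $c \in C$, i.e.\ when $u \in C^\dualcode$. This is the discrete analogue of the identification of the characters of $\R^n/L$ with $L^*$ used in the proof of Theorem~\ref{thm:psum}. Since these $|C^\dualcode|$ characters form an orthonormal basis for the space of functions on $\F_2^n/C$, one has an expansion $F(z) = \sum_{c' \in C^\dualcode} a_{c'} (-1)^{\inprodc{c'}{z}}$ with
\begin{equation*}
a_{c'} = \frac{1}{|C^\dualcode|} \sum_{z \in \F_2^n/C} F(z) \, (-1)^{\inprodc{c'}{z}}.
\end{equation*}

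Finally I would unfold $F$ and re-index, exactly as the continuous proof unfolds $F$ over a fundamental domain. Since $\inprodc{c'}{c} = 0$ for every $c \in C$, the value $(-1)^{\inprodc{c'}{z}}$ equals $(-1)^{\inprodc{c'}{c+z}}$, and as $c$ ranges over $C$ and $z$ over a set of coset representatives of $\F_2^n/C$, the element $c+z$ ranges over $\F_2^n$ exactly once. Hence
\begin{equation*}
a_{c'} = \frac{1}{|C^\dualcode|} \sum_{c \in C} \sum_{z \in \F_2^n/C} (-1)^{\inprodc{c'}{c+z}} f(c+z) = \frac{1}{|C^\dualcode|} \sum_{v \in \F_2^n} (-1)^{\inprodc{c'}{v}} f(v) = \frac{\htr{f}(c')}{|C^\dualcode|}.
\end{equation*}
Setting $z = 0$ in the Fourier expansion of $F$ then gives exactly~\eqref{eq:dpsum}. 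There is no serious obstacle here; the argument is purely formal character theory on a finite abelian group, with sums replacing integrals and the uniform probability measure on $\F_2^n/C$ replacing Haar measure on $\R^n/L$. The only point requiring a moment's care is the identification of the Pontryagin dual of $\F_2^n/C$ with $C^\dualcode$, in perfect parallel with the identification of $(\R^n/L)^\wedge$ with $L^*$ in the continuous case.
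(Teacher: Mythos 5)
Your proof is correct. It takes a genuinely different (though closely related) route from the one printed in the paper: the paper's proof expands $\sum_{c'\in C^\dualcode}\htr{f}(c')$ directly, interchanges the order of summation, and evaluates the inner character sum $\sum_{c'\in C^\dualcode}(-1)^{\inprodc{c'}{v}}$ as $|C^\dualcode|$ or $0$ according to whether $v\in C$ or $v\notin C$, which is the standard argument from MacWilliams--Sloane. You instead periodize $f$ over $C$, identify the characters of $\F_2^n/C$ with $C^\dualcode$, expand the periodization $F(z)=\sum_{c\in C}f(c+z)$ in those characters, and evaluate at $z=0$ --- exactly the strategy of the paper's proof of the continuous Poisson summation formula (Theorem~\ref{thm:psum}). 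The paper itself acknowledges your route in a footnote (``We could also adapt the technique we used in proving Theorem~\ref{thm:psum}, obtaining discrete Poisson summation via the discrete Fourier expansion of the function $z\mapsto\sum_{c\in C}f(c+z)$''), so you have in effect written out that footnote in full. What your approach buys is a proof that is word-for-word parallel to the lattice case, making the lattice/code dictionary completely explicit; what the paper's printed proof buys is brevity, since it needs only the orthogonality of characters of $\F_2^n$ and never has to discuss the dual of the quotient group. Both arguments rest on the same underlying fact, and each step of yours checks out (in particular, the unfolding $\sum_{c\in C}\sum_{z\in\F_2^n/C}=\sum_{v\in\F_2^n}$ and the count $|\F_2^n/C|=2^n/|C|=|C^\dualcode|$ are exactly what is needed to produce the normalizing factor $1/|C^\dualcode|$).
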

We briefly recount the standard proof of Theorem~\ref{thm:dpsum},
which is the one presented in~\cite[p.~127]{MacWilliamsSloane:theory}.

\begin{proof}[Proof of Theorem~\ref{thm:dpsum}]
By expanding the sum in the right-hand side of \eqref{eq:dpsum}
and rearranging the order of summation, we obtain
\begin{equation}\label{eq:indpsum}
\sum_{c'\in C^\dualcode}\htr{f}(c')
 = \sum_{c'\in C^\dualcode}\sum_{v\in \F_2^n}(-1)^{\inprodc{c'}{v}}f(v)
 = \sum_{v\in \F_2^n}f(v)\sum_{c'\in C^\dualcode}
  (-1)^{\inprodc{c'}{v}}.
\end{equation}
Now, whenever $v\in C\subset \F_2^n$ and $c'\in C^\dualcode$, we
have $\inprodc{c'}{v}=0$ by the definition of $C^\dualcode$.  It
follows that the inner sum in \eqref{eq:indpsum} equals $|C^\dualcode|$
whenever $v\in C$.  Furthermore, when $v\not\in C$, the inner sum
of \eqref{eq:indpsum} vanishes.\footnote{
  In this case, $\inprodc{c'}{v}$ takes the values $0$ and $1$
  equally often (see \cite[p.~127]{MacWilliamsSloane:theory}).
  (This statement is just an instance of the well-known fact that the
  sum of a nontrivial character on a finite commutative group vanishes.)
  We could also adapt the technique we used in proving Theorem~\ref{thm:psum},
  obtaining discrete Poisson summation via the discrete Fourier
  expansion of the function $z \mapsto \sum_{c \in C} f(c+z)$.
  }
The result then follows immediately.
\end{proof}

         \subsection{The MacWilliams Identity and Gleason's Theorem}
            % this is the we_section
\label{sec:we}
In this section, we recall two classical results from coding theory
which are closely related to the theory of lattices.  The first of
these results, the MacWilliams identity (Theorem~\ref{thm:MacWilliams},
below), expresses the weight enumerator of $C^\dualcode$ in terms of the
weight enumerator of~$C$.  The second result (Theorem~\ref{thm:Gleason},
below) is a famous theorem originally due to Gleason~\cite{Gleason:gleason},
which shows that the weight enumerators of Type~II codes can be
expressed in terms of two particular weight enumerators.

\begin{theorem}[{MacWilliams Identity (\cite{MacWilliams:identity}\thmmulticitesep\cite[p.~78]{SPLAG}\thmmulticitesep\cite[p.~74]{Ebeling:lattices}\thmmulticitesep\cite[p.~126]{MacWilliamsSloane:theory})}]\label{thm:MacWilliams}
For any  binary linear code $C$ of length $n$, we have
\begin{equation}
W_{C}(x,y)=\frac{1}{|C^\dualcode|}W_{C^\dualcode}(x+y,x-y).
\label{eq:McW_Id}
\end{equation}
\end{theorem}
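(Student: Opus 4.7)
The plan is to invoke the discrete Poisson summation formula (Theorem~\ref{thm:dpsum}) with a cleverly chosen function, in direct analogy with how the functional equation for $\Theta\0_{L^*}$ was derived from Theorem~\ref{thm:psum}. Namely, fix complex numbers $x,y$ and define $f: \F_2^n \ra \C$ by $f(v) = x^{n-\wt(v)} y^{\wt(v)}$. Then by the definition~\eqref{eq:W_C} of the weight enumerator, the left-hand side of the discrete Poisson identity~\eqref{eq:dpsum} evaluated with $C^\dualcode$ in place of $C$ (so that the right-hand side sums over $C^{\dualcode\dualcode} = C$) is exactly $W_{C^\dualcode}(x,y)$.

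The key computation is the Hadamard transform $\htr{f}$. The function $f$ factors as a tensor product over coordinates: writing $g: \F_2 \ra \C$ by $g(0) = x$ and $g(1) = y$, we have $f(v_1,\ldots,v_n) = \prod_{j=1}^n g(v_j)$. Since the character $(-1)^{\inprodc{u}{v}} = \prod_j (-1)^{u_j v_j}$ also factors, the transform~\eqref{eq:dft} splits as a product of one-dimensional transforms:
\begin{equation*}
\htr{f}(u) = \prod_{j=1}^n \bigl( g(0) + (-1)^{u_j} g(1) \bigr)
= \prod_{j=1}^n \bigl( x + (-1)^{u_j} y \bigr).
\end{equation*}
In each factor, $u_j = 0$ contributes $x+y$ while $u_j = 1$ contributes $x-y$, so
\begin{equation*}
\htr{f}(u) = (x+y)^{n - \wt(u)} (x-y)^{\wt(u)}.
\end{equation*}

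Summing this over $u \in C$ and comparing with~\eqref{eq:W_C}, the right-hand side of the discrete Poisson summation formula, applied with $C^\dualcode$ in place of $C$ (and using that $(C^\dualcode)^\dualcode = C$), becomes $|C|^{-1} W_C(x+y,x-y)$. Equating and rearranging yields~\eqref{eq:McW_Id}. There is no real obstacle here: the only thing to verify carefully is the tensor-product factorization of $\htr{f}$, which reduces the computation to the trivial one-variable case. The proof is a coding-theoretic mirror of the derivation of Proposition~\ref{prop:theta_func_eq}, with the Gaussian $e^{-\pi\inprodl{x}{x}/t}$ replaced by the multiplicative function $v \mapsto x^{n-\wt(v)} y^{\wt(v)}$, which is the natural multiplicative ``generating function weight'' on Hamming space.
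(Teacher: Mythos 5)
Your proposal is correct and is essentially the paper's own proof: the paper likewise takes $f(v)=x^{n-\wt(v)}y^{\wt(v)}$, records that $\htr{f}(u)=(x+y)^{n-\wt(u)}(x-y)^{\wt(u)}$ (which your coordinatewise tensor factorization justifies), and concludes by discrete Poisson summation. The only cosmetic difference is that you apply Theorem~\ref{thm:dpsum} to $C^\dualcode$ and then relabel $C\leftrightarrow C^\dualcode$ at the end, whereas the paper applies it to $C$ directly and obtains \eqref{eq:McW_Id} immediately.
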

\begin{proof}
Define a function $f: \F_2^n \ra \C$ by $f(v)=x^{n-\wt(v)}y^{\wt(v)}$.
Then
$$
\htr{f}(u)=(x+y)^{n-\wt(u)}(x-y)^{\wt(u)}.
$$
Theorem~\ref{thm:MacWilliams} therefore follows directly from the
discrete Poisson summation formula (Theorem~\ref{thm:dpsum}).
\end{proof}

\begin{theorem}[{Gleason's Theorem (\cite{Gleason:gleason}\thmmulticitesep\cite{Sloane:invariants}\thmmulticitesep\cite[p.~192]{SPLAG}\thmmulticitesep\cite[p.~75]{Ebeling:lattices})}]\label{thm:Gleason}
For any Type~II code~$C$, the weight enumerator $W_C(x,y)$ is
a polynomial in
\begin{equation}
\gleasonphi\defeq W_{e_8}(x,y)
= x^8+14x^4y^4+y^8\quad\text{and}\quad\gleasonxi\defeq
x^4y^4(x^4-y^4)^4.
\label{eq:gleason_greek}
\end{equation}
\end{theorem}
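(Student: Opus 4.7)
The plan is to cast Gleason's theorem as an invariant-theoretic statement for a finite subgroup of $\GL_2(\C)$. From the Type~II condition $\wt(c) \equiv 0 \bmod 4$ for every $c \in C$, the substitution $y \mapsto iy$ leaves each monomial $x^{n-\wt(c)} y^{\wt(c)}$ fixed, so $W_C$ is invariant under $T = \slmat{1}{0}{0}{i}$. From the MacWilliams identity (Theorem~\ref{thm:MacWilliams}) applied to $C = C^\dualcode$, together with $|C^\dualcode| = 2^{n/2}$, one gets $W_C(x,y) = 2^{-n/2} W_C(x+y, x-y)$; since $W_C$ is homogeneous of degree~$n$, this is equivalent to invariance under $S = \frac{1}{\sqrt{2}} \slmat{1}{1}{1}{-1}$.

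Next I would verify that the subgroup $G \subset \GL_2(\C)$ generated by $S$ and $T$ is finite and that $\gleasonphi$ and $\gleasonxi$ both lie in $\C[x,y]^G$. Finiteness is a direct matrix computation (the classical result is $|G| = 192$), and the invariance check reduces to evaluating $\gleasonphi$ and $\gleasonxi$ on $S\cdot(x,y)$ and $T\cdot(x,y)$ and simplifying; the $T$-invariance is immediate from inspection of the degrees of $y$ (divisible by~$4$ in each term), and the $S$-invariance is a finite polynomial identity.

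The crux is then to show $\C[x,y]^G = \C[\gleasonphi, \gleasonxi]$. For this I would invoke Molien's formula
\begin{equation*}
\sum_{d \geq 0} \dim \C[x,y]^G_d \cdot t^d \, = \, \frac{1}{|G|} \sum_{g \in G} \frac{1}{\det(I - tg)}
\end{equation*}
and show that the right-hand side simplifies to $1 / \bigl((1-t^8)(1-t^{24})\bigr)$ by summing over conjugacy classes of~$G$. Since $\gleasonphi$ and $\gleasonxi$ are algebraically independent (their degrees are coprime modulo each other and $\gleasonxi$ vanishes on lines where no power of $\gleasonphi$ does), the subring $\C[\gleasonphi, \gleasonxi]$ has Hilbert series $1/\bigl((1-t^8)(1-t^{24})\bigr)$; comparison with the Molien series forces equality of these two graded rings, and Gleason's theorem follows because $W_C \in \C[x,y]^G$.

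The main obstacle is the Molien series computation itself, which requires enumerating conjugacy classes of $G$ and summing the $(1-\lambda t)^{-1}(1-\mu t)^{-1}$ contributions from each eigenvalue pair. An alternative route, which sidesteps Molien and is essentially the one taken in the paper's Appendix, proceeds by induction on~$n$: subtract a suitable scalar multiple of $\gleasonphi^{n/8}$ from $W_C$ so the remainder vanishes at $(x,y) = (1,0)$, exploit $G$-invariance to show the remainder vanishes also at the orbit points $(1,\pm 1), (1,\pm i)$ (with multiplicity), factor out $\gleasonxi = x^4 y^4 (x^4-y^4)^4$, and apply the inductive hypothesis to the quotient of degree $n-24$.
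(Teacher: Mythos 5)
Your proposal is correct, and the first half (extracting invariance of $W_C$ under $\slmat{1}{0}{0}{i}$ from the doubly-even condition and under $2^{-1/2}\slmat{1}{1}{1}{-1}$ from MacWilliams plus homogeneity) coincides with the paper's proof. Where you diverge is in the determination of the invariant ring: the paper's main text simply identifies $G_{\rm II}$ as \#9 in the Shephard--Todd list with invariant degrees $8$ and $24$, and its Appendix proves $\C[x,y]^{G_{\rm II}}=\C[\gleasonphi,\gleasonxi]$ by climbing a tower of reflection groups (signed permutations, then $G_{\rm I}$, then the group permuting the three quadratic forms $Q_1,Q_2,Q_3$ with $Q_1+Q_2+Q_3=0$, then the full $G_{\rm II}$), computing each invariant ring explicitly along the way. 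Your Molien-series route is a legitimately different and standard alternative: it trades the structural analysis for a conjugacy-class enumeration of the order-$192$ group, plus a Hilbert-series comparison with $\C[\gleasonphi,\gleasonxi]$. What the paper's tower buys, and Molien does not, is the intermediate data the paper actually reuses later: Gleason's theorem for Type~I codes ($\C[x^2+y^2,\gleasondelta]$) and the index-$2$ covariant $\gleasonpsi{2}$ with $\gleasonpsi{2}^2=\gleasonxi$, which reappears in Theorem~\ref{thm:gleasongen}. What Molien buys is uniformity and less cleverness. Two small cautions: your justification of the algebraic independence of $\gleasonphi$ and $\gleasonxi$ is loose as stated (two homogeneous binary forms can be algebraically dependent through relations involving more than two monomials, so ``$\gleasonxi$ vanishes where no power of $\gleasonphi$ does'' is not by itself conclusive); the clean fix is the nonvanishing of the Jacobian $\det\bigl(\partial(\gleasonphi,\gleasonxi)/\partial(x,y)\bigr)$. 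Also, your closing remark that the inductive ``subtract $\gleasonphi^{n/8}$, factor out $\gleasonxi$'' sketch is ``essentially the one taken in the paper's Appendix'' is inaccurate --- that is the classical elementary argument, but the Appendix does something else entirely.
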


\begin{proof}
Since $C$\/ is of Type~II, the exponent of~$y$ in each monomial
$x^{n-\wt(v)}y^{\wt(v)}$ is a multiple of~$4$.  Thus each monomial
is invariant under the substitution of $iy$ for $y$,
whence the sum $W_C(x,y)$ of these monomials also satisfies
the identity $W_C(x,y) = W_C(x,iy)$.
Since $C = C^\dualcode$,
we also have an identity
\begin{align}
W_C(x,y) &= \frac{1}{|C|} W_C(x+y,x-y)
\nonumber
\\
&= 2^{-n/2} W_C(x+y,x-y)
\nonumber
\\
&= W_C\bigl(2^{-1/2}(x+y), 2^{1/2}(x-y)\bigr):
\label{eq:GII}
\end{align}
the first step uses Theorem~\ref{thm:MacWilliams};
for the second, we deduce $|C| = 2^{n/2}$ from
$2^n = \left| C \right| \cdot |C^\dualcode| = \left| C \right|^2$;
and for the last step, we use the fact that
$W_C$ is a homogeneous polynomial of degree~$n$.  Therefore this
homogeneous polynomial is invariant under the group, call it $G_{\rm II}$,
generated by linear substitutions with matrices
$\slmat{1}{0}{0}{i}$ and $2^{-1/2}\slmat{1}{1}{1}{-1}$.

It turns out that $G_{\rm II}$ is a complex reflection group,
and thus has a polynomial ring of invariants.  Namely, $G_{\rm II}$ is
\#9 in the Shephard-Todd list \cite{ShephardTodd}, and its
invariant degrees are $8$ and~$24$, with $\gleasonphi,\gleasonxi$
as a convenient choice of generators.
This result completes the proof of Gleason's theorem for
Type~II codes.  \end{proof}

In the Appendix we give a direct proof of
$\C[x,y]^{G_{\rm II}} = \C[\gleasonphi,\gleasonxi]$.
The literature contains several other approaches to
the determination of this invariant ring,
including Ebeling's proof in~\cite{Ebeling:lattices}
using the theory of modular forms(!).  See \cite[p.~192]{SPLAG}.
The method we use reaches $G_{\rm II}$ via a suitable tower of
reflection groups starting from $\{1\}$, each normal in the next;
along the way we also obtain Gleason's theorem for Type~I codes,
and encounter a polynomial $\gleasonpsi{12}$,
invariant under an \hbox{index-$2$} subgroup of~$G_{\rm II}$,
that will figure in our subsequent development.

   \section{The Spaces of Discrete Harmonic Polynomials}
      % this is the d_harmonic_poly_section

\label{sec:d_harmonic_poly_section}

In this section, we present some useful results in the theory
of \earlyterm{discrete harmonic polynomials}.  These polynomials
were originally introduced by Delsarte~\cite{Delsarte:Hahn}, who
gave a combinatorial development.  Here, we give a new approach
to these polynomials using the finite-dimensional representation
theory of~$\Slalg_2$.

\subsection{Basic Definitions and Notation}

A function $\monomfunct$ on $\F_2$ may be interpreted as a $2\times
1$ matrix $g=\mono{g_{0}}{g_1}$, where $g_v$ is the value assumed
on input $v\in \F_2$.  It is easily computed that the discrete
Fourier transform $\htr{g}$ of $g$ is the function
$$
\htr{g}=\mono{g_{0}+g_1}{g_0-g_1}=\slmat{1}{1}{1}{-1}\mono{g_{0}}{g_1};
$$
the discrete Fourier transform is therefore encoded by the matrix
$\FToper\defeq \slmat{1}{1}{1}{-1}$.  There is a natural action
of $\Slalg_2$ on these functions $\monomfunct$, defined by
multiplication from the left by matrices in $\Slalg_2$.
Thus, we may interpret the space of functions on $\F_2$ as
a representation of $\Slalg_2$ isomorphic with the
\hbox{$2$-dimensional} defining representation~$\module_1$ of~$\Slalg_2$.

More generally, a monomial function $\monomfunct$ on $\F_2^n$
must have total degree at most~$n$,\footnote{This is a consequence
of the fact that, for any $v\in \F_2^n$, we have $v_j^2=v_j$ for
all $j$ ($1\leq j\leq n$).} and so may be interpreted as a pure
tensor in $\module_1^{\otimes n}$; such a function is denoted
$$
\monomfunct =
\monom{\monomfunct_{10}}{\monomfunct_{11}}{\monomfunct_{n0}}{\monomfunct_{n1}}
$$
and assumes the value $\monomfunct_{1v_1}\cdots\monomfunct_{nv_n}$
on $v\in\F_2^n$.  In this setting, the discrete Fourier transform
corresponds to the action of the operator
\begin{equation}
\FTopern\defeq \FToper^{\otimes n}.
\label{eq:Ttilde}
\end{equation}

For example, the degree-$n$ monomial $\monomfunct_*(v)= v_1\cdots
v_n$, which takes the value of the product of the coordinates
of the input $v\in\F_2^n$, is the function
$$
\monomfunct_*=\monom{0}{1}{0}{1}.
$$
The discrete Fourier transform $\htr{\monomfunct_*}$ of $\monomfunct_*$ is
$$
\htr{\monomfunct}=\FTopern \monomfunct_*=\monom{1}{-1}{1}{-1}.\footnote{
  Note that this aligns with the expression
  $$
  \htr{\monomfunct_*}(u)=\sum_{v\in \F_2^n}(-1)^{\inprodc{u}{v}}
    \monomfunct_*(v)=(-1)^{\sum_{j=1}^nu_j},
  $$%
  obtained from the more common definition \eqref{eq:dft} of the
  discrete Fourier transform given earlier.
  }
$$

\subsubsection{Polynomials in the Variables $(-1)^{v_j}$ ($1\leq j\leq n$)}

Instead of working with polynomials in the variables $v_j$ ($1\leq
j\leq n$), we work with the discrete Fourier transforms  $(-1)^{v_j}$
 ($1\leq j\leq n$) of these variables.\footnote{Delsarte~\cite{Delsarte:Hahn}
uses the $v_j$ basis, rather than the $(-1)^{v_j}$ basis.
% We depart from Delsarte
%%% We'll rhyme some other time
We depart from Delsarte's notation because the use of the $(-1)^{v_j}$ basis
greatly simplifies our development.}  We denote by $\dpolyspace$
the $\C$-vector space of polynomial functions $\dhp$ in the variables
$$
(-1)^{v_1}, \ldots, (-1)^{v_n},
$$
where $v\in \F_2^n$. We denote
by $\dpolyspaced{d}$ the subspace of $\dpolyspace$ consisting
of degree-$d$ homogeneous polynomials in the $(-1)^{v_j}$  ($1\leq
j\leq n$) with each variable $(-1)^{v_j}$ in each term appearing
to degree $0$ or $1$. We adopt the convention that $\dpolyspaced{d}=\{0\}$
for $d<0$.

The preceding discussion shows that any $\dhp\in \dpolyspace$
may be interpreted as an element of~$\module_1^{\otimes n}$, and
that the discrete Fourier transform $\htr{\dhp}$ of $\dhp$ is
equal to $\FTopern\dhp$.  The action of $\Slalg_2$ defined above
gives rise to the following action on $\dpolyspace$: if $M\in\Slalg_2$
and $Q\in \dpolyspace$,  then the action of $M$ on $\dhp$ is given~by
$$
\left(\sumslmateone{M}\right) \dhp.
$$
Here, $\sumslmateone{M}$ denotes the operator equal to
$$
\left(M\otimesdots \idsl\right)+\cdots+\left(\slmatone{M}\right)+\cdots
 + \left(\idsl\otimesdots M\right),
$$
the sum of $n$ tensors, the $j$-th of which acts as $M$ on the $j$-th
factor and as the identity matrix $\slmat{1}{0}{0}{1}$ on
the other factors.

\subsubsection{Conjugation of $\Xoper$, $\Hoper$, and $\Yoper$
by the Discrete Fourier Transform}

Recall that we denote by $(\Xoper,\Hoper,\Yoper)$ the standard basis
for $\Slalg_2$, exhibited in~\eqref{eq:sl2}.
We define the operators $\Xoperp$, $\Hoperp$, and $\Yoperp$ to
be the conjugates of $\Xoper$, $\Hoper$, and $\Yoper$ by the
discrete Fourier transform:
\begin{align}
\Xoperp &\defeq
  \FToper^{-1}\Xoper\FToper=\frac{1}{2}\left(\Hoper-\Xoper+\Yoper\right),
\nonumber  \\
\Hoperp &\defeq
  \FToper^{-1}\Hoper\FToper=\Xoper+\Yoper,
  \label{eq:XHY_conj}
\\
\Yoperp &\defeq
  \FToper^{-1}\Yoper\FToper=\frac{1}{2}\left(\Hoper+\Xoper-\Yoper\right).
\nonumber
\end{align}

Conjugation by the Fourier transform operator $\FToper$ induces
an isomorphism of Lie algebras
\begin{equation}
\Xoper \longleftrightarrow \Xoperp,\quad
\Hoper \longleftrightarrow \Hoperp,\quad
\Yoper \longleftrightarrow \Yoperp,
\label{eq:XHY_invol}
\end{equation}
hence these operators $\Xoperp,\Hoperp,\Yoperp$ satisfy the commutation
relations of~\eqref{eq:slcommrel}:
\begin{equation}\label{eq:ftslcommrel}
\commut{\Xoperp}{\Yoperp}=\Hoperp, \quad
\commut{\Hoperp}{\Xoperp}=2\Xoperp,
\quad \commut{\Hoperp}{\Yoperp}=-2\Yoperp.
\end{equation}

We write $\Xoperpn$, $\Hoperpn$, and $\Yoperpn$ for operators
\begin{align}
\Xoperpn &\defeq \sumslmateone{\Xoperp}
  = \FTopern^{-1} \left(\sumslmateone{\Xoper}\right)\FTopern,
  \nonumber \\
\Hoperpn &\defeq \sumslmateone{\Hoperp}
  = \FTopern^{-1} \left(\sumslmateone{\Hoper}\right)\FTopern,
  \label{eq:XHY_tilde} \\
\Yoperpn &\defeq \sumslmateone{\Yoperp}
  = \FTopern^{-1} \left(\sumslmateone{\Yoper}\right)\FTopern,
\nonumber
\end{align}
which represent the actions of $\Xoperp$, $\Hoperp$
and $\Yoperp$ on elements of $\module_1^{\otimes n}$.  The commutation
relations of~\eqref{eq:ftslcommrel} extend to these operators,
as well:
\begin{equation}\label{eq:ftslcommreln}
\commut{\Xoperpn}{\Yoperpn}=\Hoperpn,
\quad \commut{\Hoperpn}{\Xoperpn}=2\Xoperpn,
\quad \commut{\Hoperpn}{\Yoperpn}=-2\Yoperpn.
\end{equation}
The relations~\eqref{eq:ftslcommreln} induce an isomorphism between
$\Slalg_2$ and the algebra generated by
$\Xoperpn$, $\Hoperpn$, and~$\Yoperpn$.

Now, we have the following result immediately from the definition
of $\Hoperpn$.

\begin{lemma}\label{lem:espacep1}
If $\dhp\in\dpolyspaced{d}$,
then $\Hoperpn\dhp =(n-2d)\dhp$.
\end{lemma}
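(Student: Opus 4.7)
The plan is to reduce to the one-variable case, since $\Hoperpn = \sumslmateone{\Hoperp}$ acts as a derivation across the $n$ tensor factors of $\module_1^{\otimes n}$. Each monomial of the basis of $\dpolyspaced{d}$ is a pure tensor $\modulelt_1\otimes\cdots\otimes \modulelt_n$ in which exactly $d$ of the $n$ factors equal $(-1)^{v_j}$ (as a function on $\F_2$, viewed as an element of $\module_1$) and the remaining $n-d$ factors equal the constant function $1$. If I can show that $\Hoperp$ acts as $+1$ on the constant function and as $-1$ on $(-1)^{v}$, then the derivation property forces $\Hoperpn$ to act on such a pure tensor by the scalar $(n-d)(+1) + d(-1) = n-2d$, and linearity extends the claim to all of $\dpolyspaced{d}$.

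For the one-variable calculation, I would use the identification of a function $g\colon \F_2\to\C$ with the column vector $\mono{g_0}{g_1}$ introduced at the start of this section. Under that identification the constant function $1$ becomes $\mono{1}{1}$ and the function $v\mapsto (-1)^v$ becomes $\mono{1}{-1}$. From the formula $\Hoperp = \Xoper + \Yoper$ in~\eqref{eq:XHY_conj} we have $\Hoperp = \slmat{0}{1}{1}{0}$, so direct matrix multiplication gives
\begin{equation*}
\Hoperp \mono{1}{1} = \mono{1}{1}, \qquad \Hoperp \mono{1}{-1} = -\mono{1}{-1},
\end{equation*}
which supplies the desired eigenvalues $+1$ and $-1$.

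Combining the two steps: for a pure-tensor monomial in $\dpolyspaced{d}$, the operator $\slmatone{\Hoperp}$ acts as a scalar on each factor (equal to $+1$ or $-1$ according to whether the $j$-th factor is $1$ or $(-1)^{v_j}$), so summing over $j$ as in the definition~\eqref{eq:XHY_tilde} of $\Hoperpn$ yields the eigenvalue $(n-d)-d = n-2d$. Since $\dpolyspaced{d}$ is spanned by such monomials, linearity completes the proof. There is essentially no obstacle here beyond bookkeeping; the only point requiring care is the correspondence between polynomials in the variables $(-1)^{v_j}$ and tensors in $\module_1^{\otimes n}$, which determines which basis vector of $\module_1$ pairs with which degree.
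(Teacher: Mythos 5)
Your proof is correct and follows the same route as the paper's: the paper's one-line proof likewise rests on the observation that $\mono{1}{1}$ and $\mono{1}{-1}$ are the $+1$- and $(-1)$-eigenvectors of $\Hoperp$, with the derivation structure of $\Hoperpn$ then giving the eigenvalue $(n-d)-d = n-2d$ on each monomial. Your version just spells out the matrix computation $\Hoperp = \Xoper+\Yoper = \slmat{0}{1}{1}{0}$ and the tensor bookkeeping explicitly.
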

\begin{proof}
The result follows directly, because the $1$-eigenspace of $\Hoperp$
is the span of $\left\{\mono{1}{1}\right\}$ and the $(-1)$-eigenspace
of $\Hoperp$ is the span of $\left\{\mono{1}{-1}\right\}$.
\end{proof}

For $\dhp\in\dpolyspaced{d}$, we observe that
$\left(\slmatone{\Xoperp}\right)\dhp\in\dpolyspaced{d-1}$,
as we have
$$
\Xoperp\mono{1}{-1}=\mono{1}{1}\quad\text{and}\quad
\Xoperp\mono{1}{1}=\mono{0}{0}.
$$
Thus,
$\Xoperpn\dhp=\left(\sumslmateone{\Xoperp}\right)\dhp\in\dpolyspaced{d-1}$.
We define the \defn{space of degree-$d$ discrete harmonic polynomials} by
\begin{equation}
\dharmspaced{d}\defeq
\ker\left(\Xoperpn:\dpolyspaced{d}\to\dpolyspaced{d-1}\right).
\label{eq:D0_d}
\end{equation}
We then define the  \defn{space of discrete harmonic polynomials},
denoted $\dharmspace$, to be the direct sum
\begin{equation}
\dharmspace\defeq \dispoplus_{d=0}^n
 \dharmspaced{d}=\ker\left(\Xoperpn:\dpolyspace\to \dpolyspace\right).
\label{eq:D0}
\end{equation}

\subsection{Decomposition of Degree-$d$ Discrete Homogeneous Polynomials}

It is immediate from~\eqref{eq:ftslcommreln} that the operator
$\Hoperpn$ maps $\dharmspace$ to itself, since if $\dhp\in \dharmspace$
then
$$
\Xoperpn\Hoperpn \dhp
= \left(\Hoperpn\Xoperpn - \commut{\Hoperpn}{\Xoperpn}\right)\dhp
= \left(\Hoperpn\Xoperpn-2\Xoperpn\right)\dhp=0.
$$

The next lemma substantially refines this observation.
Recall \cite[p.18, Definition 1]{Serre:linear} that an element $e$ of
an $\Slalg_2$ module is said to be \defn{primitive of weight $\lambda$} 
if $e \neq 0$, $\Xoper e = 0$, and $\Hoper e = \lambda e$.

\begin{lemma}\label{lem:espacep}
If $\dhp\in\dharmspaced{d}$, then
$\dhp$ is either zero or primitive of weight $n-2d$
 with respect to the representation of $\Slalg_2$ induced by
 the action of $\Xoperpn$, $\Hoperpn$, and $\Yoperpn$.
\end{lemma}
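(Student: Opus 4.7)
The plan is to verify the two defining conditions for a primitive vector of weight $n-2d$, both of which are already in hand. A primitive vector of weight $\lambda$ is by definition a nonzero element $e$ satisfying $\Xoperpn e = 0$ and $\Hoperpn e = \lambda e$. So if $\dhp \in \dharmspaced{d}$ and $\dhp \neq 0$, I only need to check these two equalities.

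The first condition, $\Xoperpn \dhp = 0$, is immediate from the definition \eqref{eq:D0_d} of $\dharmspaced{d}$ as $\ker\bigl(\Xoperpn : \dpolyspaced{d} \to \dpolyspaced{d-1}\bigr)$. The second condition, $\Hoperpn \dhp = (n-2d)\,\dhp$, follows by applying Lemma~\ref{lem:espacep1} to $\dhp$, which is legitimate because $\dharmspaced{d} \subseteq \dpolyspaced{d}$. Combining these two observations with the assumption $\dhp \neq 0$ yields the desired primitivity.

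There is no real obstacle; the lemma is essentially a repackaging of the definition of $\dharmspaced{d}$ together with the eigenvalue computation already performed in Lemma~\ref{lem:espacep1}. The ``either zero or'' clause in the statement simply accommodates the fact that primitivity by convention excludes the zero vector, so the contrapositive formulation avoids having to assume $\dhp \neq 0$ in the hypothesis. The substantive content of the lemma is not the verification itself but rather its role in the broader structure theory: it identifies $\dharmspaced{d}$ as the space of primitive vectors of weight $n-2d$ inside $\dpolyspace$, which is what makes the finite-dimensional $\Slalg_2$ representation theory applicable and sets up the discrete analogue of Proposition~\ref{prop:harmdecomp} in subsequent sections.
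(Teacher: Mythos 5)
Your proof is correct and follows exactly the paper's argument: the condition $\Xoperpn\dhp=0$ is the definition of $\dharmspaced{d}$, and the weight condition $\Hoperpn\dhp=(n-2d)\dhp$ is Lemma~\ref{lem:espacep1}. Nothing further is needed.
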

\begin{proof}
The result is a direct consequence of Lemma~\ref{lem:espacep1}
because all $\dhp\in \dharmspace$ satisfy $\Xoperpn\dhp = 0$.
\end{proof}

\begin{cor}\label{cor:d>n/2}
If $d > n/2$ then $\dharmspaced{d} = \{0\}$.
\end{cor}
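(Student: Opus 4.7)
The plan is to combine Lemma~\ref{lem:espacep} with the finite-dimensionality of $\dpolyspace$ as a subspace of $\module_1^{\otimes n}$. Suppose for contradiction that some nonzero $\dhp \in \dharmspaced{d}$ exists with $d > n/2$. By Lemma~\ref{lem:espacep}, $\dhp$ is primitive of weight $\lambda \defeq n - 2d$ for the $\Slalg_2$-action induced by $(\Xoperpn, \Hoperpn, \Yoperpn)$, and the hypothesis $d > n/2$ forces $\lambda < 0$.

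Next I would invoke the standard fact that a primitive vector in a finite-dimensional $\Slalg_2$-representation must have non-negative integer weight (see, e.g., \cite[Ch.~IV]{Serre:linear}). For completeness this can be derived directly from the commutation relations~\eqref{eq:ftslcommreln}: an easy induction on $k \geq 0$ yields
$$
\Xoperpn \Yoperpn^k \dhp = k(\lambda - k + 1) \Yoperpn^{k-1} \dhp,
$$
and the iterates $\Yoperpn^k \dhp$ lie in the distinct $\Hoperpn$-eigenspaces of weights $\lambda - 2k$. Since $\lambda < 0$, the scalar $k(\lambda - k + 1)$ is nonzero for every $k \geq 1$; a second induction then shows that no $\Yoperpn^k \dhp$ vanishes, given that $\Yoperpn^0 \dhp = \dhp \neq 0$. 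Thus $\{\Yoperpn^k \dhp\}_{k \geq 0}$ is an infinite linearly independent family in $\dpolyspace$, contradicting $\dim \dpolyspace \leq 2^n$.

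The assumed nonzero $\dhp$ therefore cannot exist, and $\dharmspaced{d} = \{0\}$ whenever $d > n/2$. There is no serious obstacle: once Lemma~\ref{lem:espacep} identifies elements of $\dharmspaced{d}$ as primitive vectors of weight $n - 2d$, the corollary reduces to the classical constraint on weights of primitive vectors in finite-dimensional $\Slalg_2$-modules, which is why only the discrete (finite-dimensional) side of the $\Slalg_2$ parallel makes the higher-$d$ spaces collapse.
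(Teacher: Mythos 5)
Your proof is correct and follows essentially the same route as the paper's: invoke Lemma~\ref{lem:espacep} to see that a nonzero element of $\dharmspaced{d}$ would be primitive of negative weight $n-2d$, then use the fact that primitive vectors in a finite-dimensional $\Slalg_2$-module must have nonnegative weight. The only difference is that you spell out the standard derivation of that fact (via $\Xoperpn\Yoperpn^k\dhp = k(\lambda-k+1)\Yoperpn^{k-1}\dhp$ and the resulting infinite independent family), whereas the paper simply cites it in one line.
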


\begin{proof}
Since $\dpolyspace$ is finite-dimensional, a primitive vector
must have nonnegative weight.
\end{proof}

For $d\leq \inlinefrac{n}{2}$ and $k=0,1,\ldots, d$, we define
$\dfpolyspaced{d}{k}\defeq (\Yoperpn)^{k}\dharmspaced{d-k}$.\footnote{The
notation $\dfpolyspaced{d}{k}$ is consistent with the notation
$\dharmspaced{d}$ for the space of degree-$d$ discrete harmonic
polynomials.}
Combining Lemma~\ref{lem:espacep} with the representation theory
of~$\Slalg_2$,
we now obtain a decomposition result for $\dpolyspaced{d}$ similar to
that obtained for $\polyspaced{d}$ in Proposition~\ref{prop:harmdecomp}.

\begin{prop}\label{prop:dharmdecomp} For any $d\leq\inlinefrac{n}{2}$,
we have the following results.
\begin{enumerate}
\item The map $\Xoperpn:\dpolyspaced{d}\to \dpolyspaced{d-1}$
is surjective.
\item We have the direct sum decomposition
$\dpolyspaced{d} = \bigoplus_{k=0}^{d}\dfpolyspaced{d}{k}
 = \dharmspaced{d}\oplus \Yoperpn \dpolyspaced{d-1}$.
\item For any nonzero $\dhp\in\dpolyspaced{d}$, the space spanned by
 $\bigl\{(\Yoperpn)^j\dhp\bigr\}_{j=0}^{n-2d}$
 is an irreducible $\Slalg_2$-module isomorphic to
 $\module_{n-2d} := \Sym^{n-2d}(\module_1)$.
\item $\dim(\dharmspaced{d})
  = \dim(\dpolyspaced{d})-\dim(\dpolyspaced{d-1})
  = \binom{n}{d}-\binom{n}{d-1}$.
\end{enumerate}
\end{prop}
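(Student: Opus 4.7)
The plan is to put the finite-dimensional representation theory of $\Slalg_2$ to work on $\dpolyspace = \module_1^{\otimes n}$, exploiting the operators $\Xoperpn, \Hoperpn, \Yoperpn$ of~\eqref{eq:XHY_tilde} and the commutation relations~\eqref{eq:ftslcommreln}. By Weyl's complete reducibility, $\dpolyspace$ decomposes as a direct sum of irreducibles $\module_m = \Sym^m(\module_1)$, and in each summand a primitive vector $v_0$ of weight $m$ generates a basis $\{(\Yoperpn)^j v_0\}_{j=0}^m$ satisfying $\Xoperpn (\Yoperpn)^j v_0 = j(m-j+1)(\Yoperpn)^{j-1} v_0$ (a standard $\Slalg_2$ computation from the commutation relations). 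Lemma~\ref{lem:espacep1} identifies $\dpolyspaced{d}$ with the $(n-2d)$-eigenspace of $\Hoperpn$, and Lemma~\ref{lem:espacep} places each primitive $v_0$ of weight $m = n-2\ell$ in $\dharmspaced{\ell}$.

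With these pieces, part~(3) is immediate: a nonzero $\dhp \in \dharmspaced{d}$ is primitive of weight $n-2d$, so the submodule it generates is irreducible of highest weight $n-2d$, isomorphic to $\module_{n-2d}$, with basis $\{(\Yoperpn)^j \dhp\}_{j=0}^{n-2d}$. For part~(2), each irreducible summand of $\dpolyspace$ contributes a one-dimensional $(n-2d)$-weight space precisely when its highest weight is $n - 2\ell$ for some $\ell \in \{0, \ldots, d\}$, in which case that weight space is spanned by $(\Yoperpn)^{d-\ell} v_0$ for a primitive $v_0 \in \dharmspaced{\ell}$. Grouping the summands by $\ell$ and setting $k = d - \ell$ yields
\[ \dpolyspaced{d} = \bigoplus_{k=0}^{d} (\Yoperpn)^k \dharmspaced{d-k} = \bigoplus_{k=0}^d \dfpolyspaced{d}{k}; \]
isolating the $k=0$ summand gives the alternate form $\dharmspaced{d} \oplus \Yoperpn \dpolyspaced{d-1}$.

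Part~(1) then reads off the formula $\Xoperpn (\Yoperpn)^{d-\ell} v_0 = (d-\ell)(n-\ell-d+1)(\Yoperpn)^{d-\ell-1} v_0$: for $d \leq n/2$ and $0 \leq \ell \leq d-1$ the coefficient $(d-\ell)(n-\ell-d+1)$ is strictly positive, so every basis vector of $\dpolyspaced{d-1}$ lies in the image of $\Xoperpn$. Part~(4) follows from part~(2) together with the evident $\dim \dpolyspaced{d} = \binom{n}{d}$ (squarefree monomials in $n$ variables of total degree $d$) and the injectivity of $\Yoperpn$ on each relevant weight space: telescoping gives $\dim \dharmspaced{d} = \dim \dpolyspaced{d} - \dim \dpolyspaced{d-1} = \binom{n}{d} - \binom{n}{d-1}$.

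The main obstacle is the invocation of complete reducibility. A self-contained substitute, more in the spirit of Proposition~\ref{prop:harmdecomp}, would be to show directly using \eqref{eq:ftslcommreln} that $\Yoperpn\Xoperpn$ acts on $(\Yoperpn)^k \dharmspaced{d-k}$ by a scalar strictly increasing in $k$ (the analogue of $\lambda_d(k)$); this would make the sum $\sum_k (\Yoperpn)^k \dharmspaced{d-k}$ direct on eigenvalue grounds, and the argument would close via the same telescoping dimension identity used above.
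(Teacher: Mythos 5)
Your argument is correct and follows essentially the same route as the paper, which likewise derives all four parts from Lemma~\ref{lem:espacep} together with complete reducibility and the explicit structure of the finite-dimensional irreducible $\Slalg_2$-modules (the paper simply cites Serre for these facts, whereas you write out the weight-space bookkeeping and the coefficient $j(m-j+1)$ explicitly). You also correctly read part~(3) as a statement about nonzero $\dhp\in\dharmspaced{d}$ rather than $\dpolyspaced{d}$, consistent with how the paper later uses it.
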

\begin{proof}
This follows quickly from Lemma~\ref{lem:espacep} together with
the finite-dimensional representation theory of~$\Slalg_2$;
see for instance \cite[Chapter~IV]{Serre:linear}.
The first and second parts follow from the decomposition of any
finite-dimensional \hbox{$\Slalg_2$-module} as a direct sum of
irreducible modules, together with the explicit action of~$\Slalg_2$
on each of its finite-dimensional irreducible modules
\cite[Chapter~IV, Theorems 2 and~3]{Serre:linear}.
The third part follows from the structure of the irreducible
representation generated by a primitive element of given weight
\cite[Chapter~IV, Corollary~2 of Theorem~1]{Serre:linear}.
The fourth part follows from the first part.
\end{proof}

It also follows that $\Xoperpn : \dpolyspaced{d} \to \dpolyspaced{d-1}$
is \textit{in}jective if $d-1 \geq n/2$, and thus an isomorphism
if $n = 2d-1$; more generally, if $d \geq n/2$ then
$\Xoperpn^{2d-n} : \dpolyspaced{d} \to \dpolyspaced{n-d}$
is an isomorphism.

   \section{The Generalized MacWilliams Identity for Harmonic Weight Enumerators}
      % this is the hwe_section
\label{sec:hwe}

For a length-$n$ binary linear code $C\subset \F_2^n$ and a discrete
harmonic polynomial $\dhp$, the harmonic weight enumerator $W_{C,\dhp}(x,y)$
is defined by\label{def:hwe}
\begin{equation}
W_{C,\dhp}(x,y)\defeq \sum_{c\in C} \dhp(c)x^{n-\wt(c)}y^{\wt(c)}.
\label{eq:hwe_def}
\end{equation}
This function encodes the weights and distribution of the codewords of $C$,
as the weighted theta functions of a lattice $L$ encode the norms and
distribution of the vectors of $L$.

We now derive a generalized MacWilliams identity for harmonic weight
enumerators.
\begin{theorem}
\label{thm:hwsfneq}
For any binary linear code $C\subset \F_2^n$  and $\dhp\in\dharmspaced{d}$,
the harmonic weight enumerator
$W_{C,\dhp}(x,y)=\sum_{c\in C} \dhp(c)x^{n-\wt(c)}y^{\wt(c)}$
satisfies the identity
\begin{equation}
W_{C,\dhp}(x,y) =
  \left(-\frac{xy}{x^2-y^2}\right)^{d}
  \cdot
  \frac{2^{\frac{n}{2}+d}}{|C^\dualcode|}
  \cdot
  W_{C^\dualcode,\dhp}\left(\frac{x+y}{\sqrt 2},\frac{x-y}{\sqrt 2}\right).
\label{eq:gen_McW_Id}
\end{equation}
\end{theorem}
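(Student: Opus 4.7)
I plan to parallel the proof of the classical MacWilliams identity (Theorem~\ref{thm:MacWilliams}) by applying discrete Poisson summation (Theorem~\ref{thm:dpsum}) to the function $f(v):=\dhp(v)\,x^{n-\wt(v)}\,y^{\wt(v)}$. Since $\sum_{c\in C}f(c)=W_{C,\dhp}(x,y)$, everything reduces to an explicit formula for the Fourier transform $\hat f$. Writing $\dhp=\sum_{|S|=d}b_S\prod_{j\in S}(-1)^{v_j}$ and using linearity together with the observation that pointwise multiplication by the character $(-1)^{\inprodc{\chi_S}{v}}$ shifts the Fourier transform by the indicator vector $\chi_S$ of $S$, the classical transform $\hat g(u)=(x+y)^{n-\wt(u)}(x-y)^{\wt(u)}$ of $g(v):=x^{n-\wt(v)}y^{\wt(v)}$ gives
$$\hat f(u)=\sum_{|S|=d}b_S\,(x+y)^{n-\wt(u+\chi_S)}(x-y)^{\wt(u+\chi_S)}.$$
Writing $w:=\wt(u)$ and $r:=|S\cap\mathrm{supp}(u)|$ so that $\wt(u+\chi_S)=w+d-2r$, I can pull out a common factor and rewrite this as
$$\hat f(u)=(x+y)^{n-w-d}(x-y)^{w-d}\sum_{|S|=d}b_S\,(x+y)^{2r}(x-y)^{2d-2r}.$$

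The crux of the argument is to show that the inner sum collapses to $(-2xy)^d\dhp(u)$. For this I would view $\dhp$ as a multi-linear polynomial $\dhp(z_1,\dots,z_n)=\sum_S b_S\prod_{j\in S}z_j$ in the variables $z_j:=(-1)^{v_j}$ and observe that on such polynomials $\Xoperpn$ acts as $\sum_j\partial/\partial z_j$. Harmonicity $\Xoperpn\dhp=0$ then immediately forces $\Xoperpn^k\dhp=0$ for all $k\ge 1$, so the Taylor expansion
$$\dhp(z+s\onevec)=\sum_{k\ge 0}\frac{s^k}{k!}\,\Xoperpn^k\dhp(z)$$
collapses to the single term $\dhp(z)$, giving the translation law $\dhp(z+s\onevec)=\dhp(z)$ for every scalar $s$. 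Combined with the homogeneity $\dhp(az)=a^d\dhp(z)$, this yields the translation--scaling law $\dhp(az+s\onevec)=a^d\dhp(z)$ for all scalars $a,s$. Specializing $z_j=(-1)^{u_j}$, $a=-2xy$, $s=x^2+y^2$, one checks $s+a(-1)^{u_j}=(x-y)^2$ when $u_j=0$ and $(x+y)^2$ when $u_j=1$; the left side of the law is then exactly $\sum_{|S|=d}b_S(x+y)^{2r}(x-y)^{2d-2r}$, while the right side is $(-2xy)^d\dhp(u)$, proving the desired collapse.

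Plugging the resulting $\hat f(u)=(-2xy)^d\dhp(u)(x+y)^{n-w-d}(x-y)^{w-d}$ back into discrete Poisson summation, pulling $(x^2-y^2)^{-d}=((x+y)(x-y))^{-d}$ outside the remaining sum, and absorbing the factor $2^{n/2}$ into the argument rescaling $(x,y)\mapsto\bigl(\tfrac{x+y}{\sqrt 2},\tfrac{x-y}{\sqrt 2}\bigr)$ of $W_{C^\dualcode,\dhp}$ yields the stated identity~\eqref{eq:gen_McW_Id}. The main obstacle is establishing the translation--scaling law; if one prefers a purely combinatorial derivation, one can instead prove $\sum_{|S|=d,\,S\supseteq T}b_S=0$ for every $|T|<d$ by downward induction on $|T|$, with base case $|T|=d-1$ being just the identity $\Xoperpn\dhp=0$ read off the monomial basis.
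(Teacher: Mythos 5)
Your proposal is correct: the character-shift computation of $\hat f$, the identification of $\Xoperpn$ with $\sum_j \partial/\partial z_j$ on multilinear polynomials in $z_j=(-1)^{v_j}$ (this is exactly the action $\Xoperp\mono{1}{-1}=\mono{1}{1}$, $\Xoperp\mono{1}{1}=\mono{0}{0}$), the resulting translation--scaling law $\dhp(az+s\onevec)=a^d\dhp(z)$ for $\dhp\in\dharmspaced{d}$, the specialization $a=-2xy$, $s=x^2+y^2$, and the final bookkeeping with discrete Poisson summation and homogeneity all check out. The skeleton (Poisson summation applied to $\dhp(v)x^{n-\wt(v)}y^{\wt(v)}$, reduced to the formula $\hat f(u)=(-2xy)^d\dhp(u)(x+y)^{n-\wt(u)-d}(x-y)^{\wt(u)-d}$) is the same as the paper's, but your derivation of that key formula is genuinely different. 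The paper proves it in two stages inside its $\Slalg_2$ operator calculus: Lemma~\ref{lem:prop12c} shows by induction on $d$, via the conjugate of $\slmate{1}{0}{0}{-1}$ by $\OPopern$, that for an arbitrary homogeneous $\dhp$ the transform equals $\bigl(-\frac{2xy}{x^2-y^2}\bigr)^d\dhp$ \emph{plus lower-degree terms}, and then Lemma~\ref{lem:dmultgauss} and Proposition~\ref{prop:interimfneqhwe} use the conjugated $\Xoperpn,\Hoperpn$ eigenspace structure to kill the lower-order terms when $\dhp$ is harmonic. You obtain the full collapse in one stroke from the observation that discrete harmonicity of a multilinear polynomial is precisely invariance under the translation $z\mapsto z+s\onevec$; either of your two justifications (the terminating Taylor expansion, or the combinatorial identity $\sum_{S\supseteq T}b_S=0$ for $|T|<d$, which indeed follows from reading $\Xoperpn\dhp=0$ off the monomial basis) is valid. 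Your route is more elementary and self-contained, and the translation--scaling law is an attractive intrinsic characterization of harmonicity; what it forgoes is the structural parallel with the Euclidean argument ($\Goper{t}$, Lemma~\ref{lemma:fourconj}, Proposition~\ref{prop:fourier}) that is the organizing principle of the paper.
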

Theorem~\ref{thm:hwsfneq} was first proven by Bachoc~\cite{Bachoc:binary},
via a purely combinatorial argument.  Here, we give a new proof of
this result in analogy with the proof of Theorem~\ref{thm:thetaP_func_eq}.

\subsection{Derivation of the Identity}

For $\dhp\in \dpolyspace$, the function $Q(v)x^{n-\wt(v)}y^{\wt(v)}$
corresponds in the tensor representation to the function
$$
\left(\slmatn{x}{0}{0}{y}\right)Q.
$$
 Therefore, in analogy with the Gaussian operators $\Goper{t}$ defined
in Section~\ref{subsec:wtf_section}, we introduce the operators
\begin{align}
\Woper&\defeq \slmat{x}{0}{0}{y},& \Wopern&\defeq \Woper^{\otimes n},\\
\Wopero&\defeq \slmat{x+y}{0}{0}{x-y}, &\Woperon&\defeq \Wopero^{\otimes
n}.
\label{eq:VW,VWtilde}
\end{align}
The operator $\Wopern$ serves as a sort of ``discrete Gaussian''
for weight enumerators.  Indeed, the weight enumerator
$W_{C}(x,y)$ of a length-$n$ binary linear code is given by
\begin{equation}
W_C(x,y)=\sum_{c\in C}\act{\Wopern\cdot\monomone{1}{1}}{c},
\label{eq:WC_field}
\end{equation}
and the Fourier transform of $\Wopern\cdot\monomone{1}{1}$
is equal to $\Woperon\cdot\monomone{1}{1}$.

\begin{lemma}\label{lem:prop12c}
If $\dhp\in \dpolyspaced{d}$, then we have \begin{equation*}
 \OPopern \dhp= \htr{\dhp},
\end{equation*} where $\htr{\dhp} =\sum_{d'=0}^d\htr{\dhp}_{d'}$ with
$\htr{\dhp}_{d'}\in \dpolyspaced{d}$ for each $d'$ ($0\leq d'\leq d$)
and
\begin{equation}
\label{eq:dtermexpr}
\htr{\dhp}_{d}=\left(-\frac{2xy}{x^2-y^2}\right)^d \dhp.
\end{equation}
\end{lemma}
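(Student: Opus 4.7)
The plan is to exploit the fact that $\OPopern = \Woperon^{-1}\FTopern\Wopern$ is itself an $n$-fold tensor power. Since $\Wopern = \Woper^{\otimes n}$, $\FTopern = \FToper^{\otimes n}$, and $\Woperon = \Wopero^{\otimes n}$, we have $\OPopern = M^{\otimes n}$ where
\begin{equation*}
M \defeq \Wopero^{-1}\FToper\Woper = \slmat{x/(x+y)}{y/(x+y)}{x/(x-y)}{-y/(x-y)}.
\end{equation*}
The natural basis to use on each factor is $\bigl\{\mono{1}{1},\mono{1}{-1}\bigr\}$, because it is adapted to the grading on $\dpolyspace$: an element of $\dpolyspaced{d}$ is precisely a linear combination of pure tensors in which exactly $d$ of the $n$ factors equal $\mono{1}{-1}$. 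Solving a single $2\times 2$ linear system yields the action of $M$ in this basis:
\begin{equation*}
M\mono{1}{1} = \mono{1}{1}, \qquad M\mono{1}{-1} = \alpha\mono{1}{1} + \beta\mono{1}{-1},
\end{equation*}
where $\alpha = (x^2+y^2)/(x^2-y^2)$ and $\beta = -2xy/(x^2-y^2)$.

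The rest is bookkeeping. By linearity it suffices to check the claim when $\dhp$ is a monomial, i.e.\ a pure tensor with exactly $d$ factors $\mono{1}{-1}$ and $n-d$ factors $\mono{1}{1}$. Applying $\OPopern = M^{\otimes n}$ leaves the $n-d$ ``unit'' factors fixed and replaces each of the $d$ ``signed'' factors independently by $\alpha\mono{1}{1} + \beta\mono{1}{-1}$. Expanding the resulting product into $2^d$ pure-tensor summands indexed by the subset $S$ of signed positions at which one keeps $\beta\mono{1}{-1}$, the summand with $|S|=d$ contributes $\beta^d\dhp$, while each summand with $|S|=d-k$ for $k\geq 1$ lies in $\dpolyspaced{d-k}$. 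Summing gives $\OPopern\dhp = \sum_{d'=0}^{d} \htr{\dhp}_{d'}$ with each $\htr{\dhp}_{d'} \in \dpolyspaced{d'}$ and top part $\htr{\dhp}_d = \beta^d\dhp = \bigl(-2xy/(x^2-y^2)\bigr)^{d}\dhp$, as required.

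The only step demanding any insight is the tensor-power factorization of $\OPopern$, after which everything reduces to a single $2\times 2$ matrix computation and a multinomial expansion. In particular, the $\Slalg_2$ machinery of Section~\ref{sec:d_harmonic_poly_section} is not invoked for this lemma, though it will govern the way the lower-order pieces $\htr{\dhp}_{d'}$ further simplify once $\dhp$ is assumed harmonic --- which is what will ultimately produce the clean functional equation~\eqref{eq:gen_McW_Id} of Theorem~\ref{thm:hwsfneq}.
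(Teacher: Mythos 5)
Your proof is correct: the factorization $\OPopern = M^{\otimes n}$ with $M = \Wopero^{-1}\FToper\Woper$, the evaluation $M\mono{1}{1}=\mono{1}{1}$ and $M\mono{1}{-1}=\frac{x^2+y^2}{x^2-y^2}\mono{1}{1}-\frac{2xy}{x^2-y^2}\mono{1}{-1}$, and the resulting multinomial expansion over subsets of the $d$ ``signed'' slots all check out. The paper reaches the same conclusion by a slightly different route: it argues by strong induction on $d$, reducing to polynomials of the form $(-1)^{v_j}\cdot\dhp$, conjugating the multiplication operator $\slmate{1}{0}{0}{-1}$ by $\OPopern$ to obtain $\slmate{0}{\frac{x-y}{x+y}}{\frac{x+y}{x-y}}{0}$, and then tracking only the top-degree coefficient through the induction via the identity $\slmat{0}{\frac{x-y}{x+y}}{\frac{x+y}{x-y}}{0}\mono{1}{1}=\frac{x^2+y^2}{x^2-y^2}\mono{1}{1}-\frac{2xy}{x^2-y^2}\mono{1}{-1}$. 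The essential $2\times2$ computation is the same in both arguments (the paper's displayed identity is your formula for $M\mono{1}{-1}$ in disguise), but your version dispenses with the induction entirely, makes the tensor-power structure of $\OPopern$ explicit, and as a bonus gives closed-form expressions for all the lower-order components $\htr{\dhp}_{d'}$ rather than only the top one $\htr{\dhp}_{d}$; the paper's inductive bookkeeping buys nothing extra here, so your organization is arguably the cleaner of the two.
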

\begin{proof}
We proceed by strong induction on~$d$.  The base case $d=0$
is immediate, so we suppose that the result holds for
$\dhp\in \dpolyspaced{d_1}$ for each nonnegative $d_1 \leq d$,
and deduce that the result holds also for $\dhp\in \dpolyspaced{d+1}$.

The discrete Fourier transform operator is linear, hence it suffices
to prove the result for the polynomials of the form $(-1)^{v_j}\cdot
\dhp$ with $\dhp\in \dpolyspaced{d}$.  Now, we compute the value of
$\Woperon^{-1}\FTopern$ times
$$
(-1)^{v_j}\cdot \dhp(v)\cdot x^{n-\wt(v)}y^{\wt(v)}=
\Wopern\cdot\left( \slmate{1}{0}{0}{-1}\right)\cdot \dhp
$$
explicitly.  We find that
\begin{align}
\nonumber
\Woperon^{-1}&\FTopern  \left(\Wopern \left(\slmate{1}{0}{0}{-1}\right)
\dhp\right)\\
&=\OPopern \left(\slmate{1}{0}{0}{-1}\right) \OPopern^{-1}\OPopern
\dhp\nonumber\\
&=\left(\slmate{0}{\frac{x-y}{x+y}}{\frac{x+y}{x-y}}{0}\right)\OPopern
\dhp\nonumber\\
&=\left(\slmate{0}{\frac{x-y}{x+y}}{\frac{x+y}{x-y}}{0}\right)\htr{\dhp},
\label{eq:istephwehm}
\end{align}
where the last equality in~\eqref{eq:istephwehm} follows
on applying the inductive hypothesis to $\OPopern \dhp$.

It is clear that the right-hand side of \eqref{eq:istephwehm} has maximal
degree $d+1$, since $\htr{\dhp}$ is of degree~$d$~and
$$
\slmate{0}{\frac{x-y}{x+y}}{\frac{x+y}{x-y}}{0}
$$
is the identity on all but one coordinate.  To finish the proof of
the lemma, we compute the degree-$(d+1)$ term of~\eqref{eq:istephwehm}.
Now, since
$$
\slmat{0}{\frac{x-y}{x+y}}{\frac{x+y}{x-y}}{0}\mono{1}{1}=
\frac{x^2+y^2}{x^2-y^2}\mono{1}{1}-\frac{2 x y}{x^2-y^2}\mono{1}{-1},
$$
the degree-$(d+1)$ term of \eqref{eq:istephwehm} must equal
$-\frac{2 x y}{x^2-y^2}\htr{\dhp}_d$.\footnote{
  Here, $\htr{\dhp}_d$ is the degree-$d$ term of $\htr{\dhp}$,
  as in the lemma statement.
  }
The desired expression~\eqref{eq:dtermexpr}
then follows from the inductive hypothesis.
\end{proof}

\begin{lemma}\label{lem:dmultgauss}
If $\dhp\in \dharmspace$ and $\Hoperpn\dhp=\lambda \cdot \dhp$, then
\begin{enumerate}
\item $\OPopern\Xoperpn\OPopern^{-1}\dhp=0$ and
\item $\OPopern\Hoperpn\OPopern^{-1}\dhp=\lambda \cdot \dhp$.
\end{enumerate}
\end{lemma}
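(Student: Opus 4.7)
The plan is to reduce both claims to single-factor $2\times 2$ matrix calculations using the tensor-product structure $\OPopern = M^{\otimes n}$, where $M\defeq \Wopero^{-1}\FToper\Woper$ is the single-factor version of $\OPopern$. Since $\Xoperpn = \sumslmateone{\Xoperp}$ and $\Hoperpn = \sumslmateone{\Hoperp}$ are sums of single-factor operators on $\module_1^{\otimes n}$, conjugation by the tensor product $\OPopern$ acts termwise:
\[
\OPopern\,\Xoperpn\,\OPopern^{-1} = \sumslmateone{M\Xoperp M^{-1}}, \qquad
\OPopern\,\Hoperpn\,\OPopern^{-1} = \sumslmateone{M\Hoperp M^{-1}}.
\]
So everything reduces to computing $M\Xoperp M^{-1}$ and $M\Hoperp M^{-1}$ as $2\times 2$ matrices and then applying the resulting operators to $\dhp$ using the primitive conditions $\Xoperpn\dhp = 0$ and $\Hoperpn\dhp = \lambda\dhp$.

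For part~(1), I would first note that $\Xoperp = \tfrac{1}{2}\slmat{1}{-1}{1}{-1}$ is a rank-one nilpotent whose kernel and image both coincide with the line $\C\cdot\mono{1}{1}$. A direct check shows that $M\mono{1}{1} = \mono{1}{1}$, so $M\Xoperp M^{-1}$ is rank-one nilpotent with the same kernel and image as $\Xoperp$ and is therefore a scalar multiple of $\Xoperp$; evaluating on any vector outside the kernel pins the scalar to $-\tfrac{x^2-y^2}{2xy}$. Thus $\OPopern\Xoperpn\OPopern^{-1} = -\tfrac{x^2-y^2}{2xy}\,\Xoperpn$, which annihilates $\dhp$ since $\dhp\in\dharmspace$ forces $\Xoperpn\dhp = 0$.

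For part~(2), direct multiplication yields
\[
M\Hoperp M^{-1} = \frac{1}{2xy}\bigl[(x^2+y^2)\,\Hoper - (x-y)^2\,\Xoper + (x+y)^2\,\Yoper\bigr],
\]
with $\Xoper,\Hoper,\Yoper$ the standard basis of $\Slalg_2$ from~\eqref{eq:sl2}. Summing over factors gives $\OPopern\Hoperpn\OPopern^{-1}$ as the same linear combination with each of $\Hoper,\Xoper,\Yoper$ replaced by its tilde version $\sumslmateone{\cdot}$. To apply this to $\dhp$, I would invert the substitution~\eqref{eq:XHY_conj} to express
\[
\Hoper = \Xoperp + \Yoperp,\quad \Xoper = \tfrac{1}{2}(\Hoperp + \Yoperp - \Xoperp),\quad \Yoper = \tfrac{1}{2}(\Hoperp + \Xoperp - \Yoperp),
\]
and likewise for the tilde versions. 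Using $\Xoperpn\dhp = 0$ and $\Hoperpn\dhp = \lambda\dhp$, the images $\sumslmateone{\Hoper}\dhp$, $\sumslmateone{\Xoper}\dhp$, and $\sumslmateone{\Yoper}\dhp$ all become explicit $\C$-linear combinations of $\dhp$ and $\Yoperpn\dhp$; the coefficient of $\Yoperpn\dhp$ in $\OPopern\Hoperpn\OPopern^{-1}\dhp$ collapses to zero via $(x-y)^2 + (x+y)^2 = 2(x^2+y^2)$, while the coefficient of $\dhp$ simplifies to $\lambda$ via $(x+y)^2 - (x-y)^2 = 4xy$.

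The only real obstacle is the bookkeeping for the change of basis between the two $\Slalg_2$-triples $(\Xoper,\Hoper,\Yoper)$ and $(\Xoperp,\Hoperp,\Yoperp)$; conceptually, part~(1) is forced by the fact that $M$ fixes the line $\C\cdot\mono{1}{1}$ shared by $\ker\Xoperp$ and $\mathrm{image}\,\Xoperp$, and in part~(2) the cancellation of the $\Yoperpn\dhp$ coefficient is exactly the identity that makes the statement go through.
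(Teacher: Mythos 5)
Your proof is correct and follows essentially the same route as the paper: both reduce to the explicit single-factor conjugation formulas, yielding $\OPopern\Xoperpn\OPopern^{-1} = -\frac{x^2-y^2}{2xy}\Xoperpn$ and $\OPopern\Hoperpn\OPopern^{-1} = \Hoperpn + \frac{x^2+y^2}{xy}\Xoperpn$, and then apply $\Xoperpn\dhp = 0$ and $\Hoperpn\dhp = \lambda\dhp$. Your expression $\frac{1}{2xy}\bigl[(x^2+y^2)\Hoper - (x-y)^2\Xoper + (x+y)^2\Yoper\bigr]$ is exactly the paper's $\Hoperp + \frac{x^2+y^2}{xy}\Xoperp$ rewritten via \eqref{eq:XHY_conj}, so the cancellation you perform by hand is just the statement that the $\Xoperpn$-component annihilates $\dhp$ while the $\Hoperpn$-component contributes $\lambda\dhp$.
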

\begin{proof}
Explicit computation gives
\begin{gather}\label{eq:dmutgauss1}
\OPopern\Xoperpn\OPopern^{-1}=-\frac{x^2-y^2}{2xy}\cdot\Xoperpn,\\
\label{eq:dmutgauss2}
\OPopern\Hoperpn\OPopern^{-1}=\Hoperpn+\frac{x^2+y^2}{xy}\cdot\Xoperpn.
\end{gather}
 The first and second results follow directly from \eqref{eq:dmutgauss1}
and \eqref{eq:dmutgauss2}, respectively, since
\begin{align*}
\dhp\in\dharmspace=\ker(\Xoperpn).&\qedhere
\end{align*}
\end{proof}

\begin{cor}\label{cor:dmultgauss} The operators $\Xoperpn$ and $\Hoperpn$
act on $\OPopern\dharmspace$.  The subspace $\OPopern\dharmspaced{d}$
is the intersection of $\ker(\Xoperpn)$ and the $(n-2d)$-eigenspace
of $\Hoperpn+\frac{x^2+y^2}{xy}\Xoperpn$ in $\OPopern\dharmspaced{d}$.
\end{cor}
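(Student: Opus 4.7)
The plan is to leverage the operator identities
$$
\OPopern\Xoperpn\OPopern^{-1}=-\frac{x^2-y^2}{2xy}\Xoperpn, \qquad
\OPopern\Hoperpn\OPopern^{-1}=\Hoperpn+\frac{x^2+y^2}{xy}\Xoperpn
$$
established as identities on all of $\dpolyspace$ in the proof of Lemma~\ref{lem:dmultgauss}. Once these are in hand, the corollary is essentially a bookkeeping exercise; the one thing to keep in mind is that although Lemma~\ref{lem:dmultgauss} is stated only for inputs in $\dharmspace$, the underlying operator identities hold globally on $\dpolyspace$, and it is this global form that drives the corollary.

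For the invariance assertion, I would rewrite the first identity as $\Xoperpn\OPopern=-\frac{x^2-y^2}{2xy}\OPopern\Xoperpn$ and apply it to an arbitrary $\dhp\in\dharmspace=\ker(\Xoperpn)$; this yields $\Xoperpn(\OPopern\dhp)=0$, so $\Xoperpn$ restricts to the zero map on $\OPopern\dharmspace$. Analogously I would rearrange the second identity as $\Hoperpn\OPopern=\OPopern\Hoperpn-\frac{x^2+y^2}{xy}\Xoperpn\OPopern$; on $\dhp\in\dharmspace$ the last term vanishes by what we have just shown, so $\Hoperpn(\OPopern\dhp)=\OPopern(\Hoperpn\dhp)$, which lies in $\OPopern\dharmspace$ because $\Hoperpn$ preserves $\dharmspace$ (as recorded in the paragraph just before Lemma~\ref{lem:espacep}).

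For the characterization of $\OPopern\dharmspaced{d}$, Lemma~\ref{lem:espacep1} tells me that $\Hoperpn$ acts as the scalar $n-2d$ on $\dpolyspaced{d}$, hence in particular on its subspace $\dharmspaced{d}$. For $\dhp\in\dharmspaced{d}$ the two rearranged identities then give
$(\Hoperpn+\tfrac{x^2+y^2}{xy}\Xoperpn)(\OPopern\dhp)=\OPopern\Hoperpn\dhp=(n-2d)\OPopern\dhp$
together with $\Xoperpn(\OPopern\dhp)=0$, which establishes one inclusion.

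Conversely, since $\OPopern$ is invertible (for $x\neq\pm y$), any $w$ in the intersection can be written uniquely as $w=\OPopern\dhp$ with $\dhp\in\dpolyspace$. Using the two operator identities to pull the conditions on $w$ back through $\OPopern^{-1}$ yields $\Xoperpn\dhp=0$ and $\Hoperpn\dhp=(n-2d)\dhp$; the first places $\dhp$ in $\dharmspace$, and Lemma~\ref{lem:espacep1} applied to the graded decomposition $\dhp=\sum_{d'}\dhp_{d'}$ with $\dhp_{d'}\in\dpolyspaced{d'}$ forces the eigenvalue condition to collapse $\dhp$ onto its degree-$d$ component, giving $\dhp\in\dharmspaced{d}$. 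No substantive obstacle arises; the only subtlety, as already flagged, is to recognize that the operator identities from the proof of Lemma~\ref{lem:dmultgauss} are global on $\dpolyspace$ rather than merely valid on $\dharmspace$, so that they may be freely conjugated and rearranged.
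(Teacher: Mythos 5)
Your proof is correct and follows exactly the route the paper intends: the corollary is stated without proof as an immediate consequence of the operator identities \eqref{eq:dmutgauss1} and \eqref{eq:dmutgauss2} from the proof of Lemma~\ref{lem:dmultgauss}, and your argument simply spells out that derivation, including the right observation that those identities hold on all of $\dpolyspace$ (and the use of Lemma~\ref{lem:espacep1} to identify the $\Hoperpn$-eigenspaces with the graded pieces). You in fact prove the substantive version of the second assertion, with ambient space $\OPopern\dpolyspace$ rather than the (presumably typographical) $\OPopern\dharmspaced{d}$ appearing in the statement, which is what is actually needed later in Proposition~\ref{prop:interimfneqhwe}.
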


\subsubsection{Proof of the Generalized MacWilliams Identity}

As a final step en route to Theorem~\ref{thm:hwsfneq}, we
prove an expression analogous to Proposition~\ref{prop:fourier}
for the discrete Fourier transform of the product
of $\Wopern$ and a discrete harmonic polynomial $\dhp\in \dharmspaced{d}$.
\begin{prop}\label{prop:interimfneqhwe}
If $\dhp\in \dharmspaced{d}$, then
\begin{equation}\label{eq:interimfneqhwe}
\OPopern\dhp = \left(-\frac{2xy}{x^2-y^2}\right)^d \dhp.
\end{equation}
\end{prop}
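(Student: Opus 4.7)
My plan is to combine the top-degree identification of Lemma~\ref{lem:prop12c} with the eigenvalue and kernel information from Corollary~\ref{cor:dmultgauss} to force all the lower-degree pieces of $\OPopern \dhp$ to vanish. This parallels Proposition~\ref{prop:fourier} and Theorem~\ref{thm:PGF} in the Euclidean setting, where a general $P \in \polyspaced{d}$ only gave $\hat f = \Goper{1/t} \Phat$ for \emph{some} $\Phat$ of degree at most $d$, and the harmonicity of $P$ was precisely what collapsed $\Phat$ to its leading term.

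First, Lemma~\ref{lem:prop12c} already writes
$$
\OPopern \dhp \;=\; \sum_{d'=0}^{d}\, \htr{\dhp}_{d'},
$$
with each $\htr{\dhp}_{d'}$ a homogeneous piece of degree $d'$ in the $(-1)^{v_j}$ (whose coefficients are rational functions of $x,y$), and with the top piece $\htr{\dhp}_{d} = \bigl(-\tfrac{2xy}{x^{2}-y^{2}}\bigr)^{d}\dhp$ already in hand.  It therefore suffices to prove $\htr{\dhp}_{d'} = 0$ for every $0 \leq d' < d$.  For this I would apply Corollary~\ref{cor:dmultgauss} (which rests on the conjugation identities \eqref{eq:dmutgauss1}--\eqref{eq:dmutgauss2}) to the harmonic $\dhp \in \dharmspaced{d}$: this says both that $\Xoperpn(\OPopern \dhp) = 0$ and that $\OPopern \dhp$ is an $(n-2d)$-eigenvector of $\Hoperpn + \tfrac{x^{2}+y^{2}}{xy}\Xoperpn$.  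Combining the two collapses to the clean statement
$$
\Hoperpn\bigl(\OPopern \dhp\bigr) \;=\; (n-2d)\,\OPopern \dhp.
$$

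Finally, by Lemma~\ref{lem:espacep1}, $\Hoperpn$ acts on each homogeneous piece $\dpolyspaced{d'}$ as the scalar $n-2d'$, and it commutes with multiplication by functions of $x,y$ since it only touches the discrete coordinates.  The eigenvalues $\{n-2d'\}$ are pairwise distinct, so applying $\Hoperpn$ termwise to the expansion above and matching the $(n-2d)$-eigenspace forces $\htr{\dhp}_{d'} = 0$ for every $d' \neq d$, yielding the claim.  I do not anticipate a substantive obstacle: once Lemma~\ref{lem:prop12c} supplies the expansion and Lemma~\ref{lem:dmultgauss} supplies the relevant commutators, the proof reduces to an eigenvalue bookkeeping argument on a graded $\Slalg_2$-module.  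The only conceptual point worth stressing is that both harmonicity ($\Xoperpn \dhp = 0$) and homogeneity ($\Hoperpn \dhp = (n-2d)\dhp$) persist, in transmuted form, under the Fourier-like operator $\OPopern$, and together they pin $\OPopern \dhp$ to the single homogeneous component of degree $d$.
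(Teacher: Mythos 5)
Your proof is correct and follows essentially the same route as the paper: Corollary~\ref{cor:dmultgauss} shows $\OPopern\dhp$ is killed by $\Xoperpn$ and hence lies in the $(n-2d)$-eigenspace of $\Hoperpn$, forcing it into $\dharmspaced{d}$, after which Lemma~\ref{lem:prop12c} identifies it with its top-degree term. Your explicit eigenvalue-separation bookkeeping is just an unpacking of the paper's one-line conclusion that $\OPopern\dhp\in\dharmspaced{d}$.
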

\begin{proof}From Corollary~\ref{cor:dmultgauss}, we see that $\OPopern\dhp$
is in both $\dharmspace$ and (since then $\Hoperpn \dhp = 0$)
the $(n-2d)$-eigenspace of~$\Hoperpn$.
That is, $\OPopern\dhp\in\dharmspaced{d}$.
The result then follows immediately from Lemma~\ref{lem:prop12c}.
\end{proof}

Finally, we obtain the generalized MacWilliams identity by combining
Proposition~\ref{prop:interimfneqhwe} with the discrete Poisson summation
formula (Theorem~\ref{thm:dpsum}).

\begin{proof}[Proof of Theorem~\ref{thm:hwsfneq}]
We obtain the discrete Fourier transform of $\Wopern\dhp$ from
Proposition~\ref{prop:interimfneqhwe}:
\begin{equation}
\label{eq:applypoisson}
\FTopern\bigl(\Wopern\dhp\bigr)
= \left(\frac{-2xy}{x^2-y^2}\right)^d\Woperon\dhp
= \left(\frac{-2xy}{x^2-y^2}\right)^d\cdot
  2^{\inlinefrac{n}{2}}\cdot
  \left(\slmatn{\frac{x+y}{\sqrt{2}}}{0}{0}{\frac{x-y}{\sqrt{2}}}\right)\cdot\dhp.
\end{equation}
The desired formula \eqref{eq:gen_McW_Id} then follows
directly from~\eqref{eq:applypoisson},
upon applying Theorem~\ref{thm:dpsum}.
\end{proof}

\subsubsection*{Remark}
One interesting consequence of Theorem~\ref{thm:hwsfneq} is the fact
that $\inlinefrac{W_{C,\dhp}(x,y)}{(xy)^d}$ is a polynomial,
for any $\dhp\in\dharmspaced{d}$.

\begin{cor}\label{cor:ispoly!}
For $C$ a binary linear code and $\dhp\in \dharmspaced{d}$,
$$
\frac{W_{C,\dhp}(x,y)}{(xy)^d}
$$
is a polynomial in the variables $x,y$.
\end{cor}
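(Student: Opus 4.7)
The plan is to derive this corollary directly from the generalized MacWilliams identity (Theorem \ref{thm:hwsfneq}) by clearing denominators and invoking unique factorization in $\C[x,y]$. First I would rewrite \eqref{eq:gen_McW_Id} in the equivalent form
\begin{equation*}
W_{C,\dhp}(x,y) \cdot (x^2-y^2)^d
= (-xy)^d \cdot \frac{2^{(n/2)+d}}{|C^\dualcode|} \cdot
  W_{C^\dualcode,\dhp}\!\left(\frac{x+y}{\sqrt 2},\frac{x-y}{\sqrt 2}\right),
\end{equation*}
which is a polynomial identity once one checks that every step of the rearrangement is reversible (both sides are a priori rational functions, but the left side is clearly a polynomial in $x,y$, so multiplication through by $(x^2-y^2)^d$ is legitimate).

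Next I would observe that the right-hand side manifestly lies in $(xy)^d \cdot \C[x,y]$: the harmonic weight enumerator $W_{C^\dualcode,\dhp}(u,v)$ is by definition a polynomial in its two arguments, and substituting $u = (x+y)/\sqrt 2$, $v = (x-y)/\sqrt 2$ yields a polynomial in $x$ and $y$, which is then multiplied by the explicit factor $(-xy)^d$.

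Having done this, the corollary reduces to the following purely algebraic statement: if $(xy)^d \cdot F(x,y) = (x^2-y^2)^d \cdot G(x,y)$ for some $F \in \C[x,y]$ and some $G = W_{C,\dhp}$, then $(xy)^d \mid G$. This is where I would invoke the fact that $\C[x,y]$ is a UFD in which $x$, $y$, $x-y$, and $x+y$ are pairwise non-associate irreducibles. Consequently $(xy)^d$ and $(x^2-y^2)^d = (x-y)^d(x+y)^d$ are coprime, so $(xy)^d \mid W_{C,\dhp}(x,y) \cdot (x^2-y^2)^d$ forces $(xy)^d \mid W_{C,\dhp}(x,y)$, giving the claim.

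There is no real obstacle here beyond bookkeeping; the substantive work was already done in proving Theorem \ref{thm:hwsfneq}. The one thing to be careful about is simply that both sides of \eqref{eq:gen_McW_Id} are to be interpreted as rational functions before clearing denominators, and that the polynomial identity derived after clearing is valid in $\C[x,y]$ (not merely as functions on some open set), so that the UFD argument applies. This also explains transparently \emph{why} the factor $\left(-xy/(x^2-y^2)\right)^d$ appears in the MacWilliams identity in the form it does: the $(xy)^d$ in the numerator records exactly the $d$-fold vanishing of $W_{C,\dhp}$ along the coordinate axes $x=0$ and $y=0$.
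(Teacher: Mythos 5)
Your proof is correct and is essentially the paper's own argument: the paper likewise observes that by Theorem~\ref{thm:hwsfneq} the two sides of the identity are rational functions whose denominators divide $(xy)^d$ and $(x^2-y^2)^d$ respectively, and concludes from the coprimality of these two polynomials that the identity holds between polynomials. (The paper also records a second, independent proof later, using the fact that any $\dhp\in\dharmspaced{d}$ is supported on $\inlinecup_{w=d}^{n-d}\hammingsphere_w$, but your route matches the primary one.)
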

\begin{proof}
By Theorem~\ref{thm:hwsfneq},
\begin{equation}
\label{eq:ispoly}
\frac{W_{C,\dhp}(x,y)}{(xy)^d}
=
  \left(-\frac{1}{x^2-y^2}\right)^{d}
  \cdot
  \frac{2^{\frac{n}{2}+d}}{|C^\dualcode|}
  \cdot
  W_{C^\dualcode,\dhp}\left(\frac{x+y}{\sqrt 2},\frac{x-y}{\sqrt 2}\right).
\end{equation}
The left-hand side is a rational function in $x,y$ whose denominator
divides $(xy)^d$, and the right-hand side is a rational function 
whose denominator divides $(x^2-y^2)^d$.
Since $(xy)^d$ and $(x^2-y^2)^d$ are relatively prime,
\eqref{eq:ispoly} is an identity between polynomials in $x$ and~$y$.
\end{proof}
As we see at the end of Section~\ref{sec:assmat}, Corollary~\ref{cor:ispoly!}
also follows directly from the $\Slalg_2$ development of discrete
harmonic polynomials.

\subsection{A Generalization of Gleason's Theorem}

In addition to the generalized MacWilliams identity, Bachoc~\cite{Bachoc:binary}
obtained a harmonic weight enumerator generalization of Gleason's
theorem.  As we will use this result in Section~\ref{sec:assmat},
we state it here.
\begin{theorem}[Bachoc \cite{Bachoc:binary}]
Let $C$ be a Type~II code of length $n$ and let $\dhp\in\dharmspaced{d}$.
 Then, the harmonic weight enumerator $W_{C,\dhp}(x,y)$ is an element
of the principal module $\C[\gleasonphi,\gleasonxi] \gleasonpsi{d}$
for the polynomial algebra $\C[\gleasonphi,\gleasonxi]$,
whose generator is given by
\begin{equation}
\label{eq:psi}
\gleasonpsi{d}\defeq\begin{cases}
1&d\equiv 0\bmod 4,\\
x^3y^3(x^4-y^4)^2(x^8-y^8)(x^8-34x^4y^4+y^8)&d\equiv 1\bmod 4,\\
x^2y^2(x^4-y^4)^2&d\equiv 2\bmod 4,\\
xy(x^8-y^8)(x^8-34x^4y^4+y^8)&d\equiv 3\bmod 4.
\end{cases}
\end{equation}
\label{thm:gleasongen}
\end{theorem}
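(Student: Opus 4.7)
The strategy is to identify $W_{C,\dhp}$ as a relative invariant for the Gleason reflection group $G_{\rm II}$ of Theorem~\ref{thm:Gleason}, and then invoke invariant theory of complex reflection groups.

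I would begin by extracting two transformation identities. Since $C$ is Type~II, every codeword has weight divisible by~$4$, so $W_{C,\dhp}(x, iy) = W_{C,\dhp}(x,y)$. Applying the generalized MacWilliams identity (Theorem~\ref{thm:hwsfneq}) to $C = C^\dualcode$ and using $|C|=2^{n/2}$ yields the functional equation
\[
W_{C,\dhp}(x,y) = \left(-\frac{2xy}{x^2-y^2}\right)^{\!d} W_{C,\dhp}\!\left(\frac{x+y}{\sqrt 2},\, \frac{x-y}{\sqrt 2}\right).
\]
Set $h(x,y) := W_{C,\dhp}(x,y)/(xy)^d$, which is a polynomial by Corollary~\ref{cor:ispoly!}. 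A direct computation from the two identities above shows that $h \circ A = i^{-d} h$ and $h \circ B = (-1)^d h$, where $A = \slmat{1}{0}{0}{i}$ and $B = 2^{-1/2}\slmat{1}{1}{1}{-1}$ generate $G_{\rm II}$. Thus $h$ is a semi-invariant of $G_{\rm II}$ for a linear character $\chi_d$ depending only on $d \bmod 4$.

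Because $G_{\rm II}$ is a complex reflection group with polynomial invariant ring $\C[\gleasonphi, \gleasonxi]$, the Chevalley--Shephard--Todd theorem (extended to relative invariants) implies that the space of polynomial $\chi_d$-semi-invariants is a free rank-one module over $\C[\gleasonphi, \gleasonxi]$, generated by an explicit polynomial $J_d$ given as a product of linear forms defining the reflecting hyperplanes of $G_{\rm II}$, raised to exponents read off from how $\chi_d$ acts on the pointwise stabilizers of those hyperplanes. Hence $h = p \, J_d$ for some $p \in \C[\gleasonphi, \gleasonxi]$, so $W_{C,\dhp} = p \cdot (xy)^d J_d$, and it remains to verify that $(xy)^d J_d$ agrees with $\gleasonpsi{d}$ up to a factor in $\C[\gleasonphi, \gleasonxi]$.

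The main obstacle I expect is this final identification, carried out case by case across the four residues $d \bmod 4$: one must enumerate the reflecting hyperplanes of $G_{\rm II}$, compute the exponents $e_H(\chi_d)$ for each, and match the resulting products to the polynomials in~\eqref{eq:psi}. A convenient route, parallel to the Appendix's treatment of Gleason's theorem itself, is the tower-of-reflection-subgroups approach previewed at the end of Section~\ref{sec:we}: the polynomial $\gleasonpsi{12}$ arises there as the relative invariant for the index-two rotation subgroup of $G_{\rm II}$, and it serves as a natural building block for the $\gleasonpsi{d}$ with $d \not\equiv 0 \bmod 4$, reducing the identification to a short explicit check in each residue class.
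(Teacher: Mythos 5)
The paper offers no proof of this theorem --- it is quoted from Bachoc --- so there is nothing internal to compare your argument against; I judge it on its own terms. Your strategy is the right one, and the first two-thirds of it is correct. The two functional equations are exactly as you state, and for $h = W_{C,\dhp}/(xy)^d$ one gets $h\circ A = i^{-d}h$ and $h\circ B = (-1)^d h$; since $\det A = i$ and $\det B = -1$, this says $h$ is a relative invariant for the character $\chi_d = \det^{-d}$ of $G_{\rm II}$. Stanley's theorem then gives $h \in \C[\gleasonphi,\gleasonxi]\,J_d$ with $J_d = \prod_H \ell_H^{e_H}$, and the hyperplane bookkeeping comes out cleanly: $G_{\rm II}$ has $6$ reflecting hyperplanes of order $4$ with $\prod_H\ell_H = xy(x^4-y^4)$ and $12$ of order $2$ with $\prod_H \ell_H = (x^4+y^4)(x^8-34x^4y^4+y^8)$, so $J_1$ is the Jacobian $\prod_H\ell_H^{m_H-1}$ of degree $30$, $J_2 = \bigl(xy(x^4-y^4)\bigr)^2$ of degree $12$, and $J_3 = \prod_H\ell_H$ of degree $18$ --- and these are precisely $\gleasonpsi{1}$, $\gleasonpsi{2}$, $\gleasonpsi{3}$.

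The step that will fail is your last one: $(xy)^d J_d$ does \emph{not} agree with $\gleasonpsi{d}$ up to a factor in $\C[\gleasonphi,\gleasonxi]$ --- for $d\equiv 2 \bmod 4$, for instance, the putative factor is $(xy)^d$, which is not invariant. But the obstruction lies in the statement as transcribed, not in your argument. Every nonzero homogeneous element of $\C[\gleasonphi,\gleasonxi]\gleasonpsi{d}$ has degree congruent to $\deg\gleasonpsi{d}\bmod 8$ (that is, to $6$, $4$, $2$ for $d\equiv 1,2,3$), whereas $W_{C,\dhp}$ is homogeneous of degree $n\equiv 0\bmod 8$; so the statement read literally would force $W_{C,\dhp}=0$ whenever $d\not\equiv 0\bmod 4$, which is false (take $C = e_8\oplus e_8\oplus e_8$ and a suitable $\dhp\in\dharmspaced{2}$: the weight-$4$ words form a $1$-design but not a $2$-design). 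Bachoc's theorem --- and the version actually invoked later in the proof of Proposition~\ref{prop:extremeassmat} --- asserts that $W_{C,\dhp}(x,y)/(xy)^d$ lies in $\C[\gleasonphi,\gleasonxi]\gleasonpsi{d}$. With that reading your proof is already complete at the end of your second paragraph: $h = W_{C,\dhp}/(xy)^d = p\,J_d = p\,\gleasonpsi{d}$. So drop the final identification, and instead supply the hyperplane data above to pin down $J_d=\gleasonpsi{d}$.
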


The degree-$12$ polynomial $\gleasonpsi{2}$ is a square root of
$\gleasonxi$; thus the harmonic enumerators that can arise for even~$d$\/
are elements of the polynomial ring $\C[\gleasonphi,\gleasonpsi{2}]$,
which is the ring of invariants for a complex reflection group
contained with index~$2$ in $G_{\rm II}$ (see the Appendix).
For odd~$d$, the polynomials $\gleasonpsi{d}$ are more complicated
covariants of~$G_{\rm II}$; we have
$\gleasonpsi{1} = \gleasonpsi{2} \gleasonpsi{3}$ and
$\gleasonpsi{3}^2 = \gleasonpsi{2}\0 (\gleasonphi^3 - 108 \gleasonxi\0)$.

   \section{Zonal Harmonic Polynomials}
      % this is the d_zonal_harmonics_section
\label{sec:d_zonal_harmonics_section}

We now introduce the \earlyterm{zonal harmonic polynomials}, a class
$\dzharmspace$ of discrete harmonic polynomials analogous to the zonal
spherical harmonics
mentioned at the end of Section~\ref{sec:wtf_section}.
Specifically, we fix some $\cfixed\in \F_2^n$
and some $d$ with $0\leq d\leq \wtcfixed$,
and determine the space $\dzharmspaced{d}\subset\dharmspaced{d}$
of degree-$d$ discrete harmonic polynomials invariant under coordinate
permutations fixing $\cfixed$.

\subsection{Preliminaries}
Throughout, we fix $\cfixed\in \F_2^n$.
We denote by $\dzpolyspaced{d} \subset \dpolyspaced{d}$
the space of degree-$d$ discrete homogeneous polynomials
invariant under the group of coordinate permutations fixing $\cfixed$,
and set $\dzharmspaced{d}\defeq \dzpolyspaced{d}\cap \dharmspaced{d}$.
We say that a polynomial in $\dzharmspaced{d}$
is a \defn{zonal harmonic polynomial of degree~$d$}, and we define
the space $\dzharmspace$ of \defn{zonal harmonic polynomials} by
\begin{equation}
\dzharmspace\defeq\bigoplus_{d=0}^{\wtcfixed}\dzharmspaced{d}.
\label{eq:zd0}
\end{equation}

\subsubsection{Generators of $\dzpolyspaced{d}$}

We now fix some $d$\/ with $0\leq d\leq \wtcfixed$ and let
$$
\coordone\defeq\{j\setsep \cfixed_j=1\},\quad
\coordzero\defeq\{j\setsep \cfixed_j=0\}.
$$
Now, we denote by $\invgen{d}{k}(v)$ the degree-$d$ discrete polynomial
\begin{align}
\nonumber\invgen{d}{k}(v)&\defeq\sum_{{\{j_1,\ldots,j_k\}\subseteq
  \coordone}\atop{\{j_{k+1},\ldots, j_{d}\}\subseteq \coordzero}}
  (-1)^{(v_{j_1}+\cdots + v_{j_k})+(v_{j_{k+1}}+\cdots +v_{j_{d}})}
\\&=\sum_{{\{j_1,\ldots, j_k\}\subseteq \coordone}\atop{\{j_{k+1},\ldots,
j_{d}\}\subseteq \coordzero}}(-1)^{v_{j_1}}\cdots (-1)^{v_{j_k}}\cdot
(-1)^{v_{j_{k+1}}}\cdots (-1)^{v_{j_{d}}}\in\dpolyspaced{d}.
\label{eq:meetinvgen}
\end{align}
The sum is nonempty for all $d$ ($0\leq d\leq \wtcfixed$)
since $|\coordone|=\wtcfixed$ and $|\coordzero|=n-\wtcfixed$.

By construction, it is clear that $\invgen{d}{k}\in \dzpolyspaced{d}$.
Conversely, we have the following lemma.
\begin{lemma}\label{lem:polygen}
The polynomials $\left\{\invgen{d}{k}\right\}_{k=0}^{\wtcfixed}$
generate $\dzpolyspaced{d}$.
\end{lemma}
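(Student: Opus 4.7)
The plan is to identify $\dzpolyspaced{d}$ as the invariant subspace of $\dpolyspaced{d}$ under the coordinate-permutation group that fixes $\cfixed$, and then use the standard orbit-sum description of invariants.

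First I would fix notation for a natural basis of $\dpolyspaced{d}$. Any element of $\dpolyspaced{d}$ is a $\C$-linear combination of monomials $m_S(v) := \prod_{j\in S}(-1)^{v_j}$, indexed by the $d$-element subsets $S \subseteq \{1,\dots,n\}$, and these monomials are linearly independent (they are distinct characters of $\F_2^n$). Next I would observe that the stabilizer of $\cfixed$ in the symmetric group $S_n$ is exactly $G := S_{\coordone} \times S_{\coordzero}$, and that $G$ acts on this monomial basis by $\sigma\cdot m_S = m_{\sigma(S)}$. Thus a polynomial $\sum_S c_S m_S \in \dpolyspaced{d}$ lies in $\dzpolyspaced{d}$ if and only if the scalars $c_S$ are constant along each $G$-orbit of $d$-subsets.

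The key combinatorial step is to classify these orbits. Since $G$ acts on $\coordone$ and on $\coordzero$ separately by arbitrary permutations, two $d$-subsets $S, S'$ lie in the same $G$-orbit if and only if $|S \cap \coordone| = |S' \cap \coordone|$; this common value $k$ ranges over the integers with $\max(0, d - (n-\wtcfixed)) \le k \le \min(d, \wtcfixed)$. For each such $k$, the orbit sum $\sum_{S : |S \cap \coordone| = k} m_S$ is precisely the polynomial $\invgen{d}{k}$ as written in~\eqref{eq:meetinvgen}. From this I would conclude that the $G$-invariants are spanned by the orbit sums $\invgen{d}{k}$ as $k$ varies, and that $\invgen{d}{k} = 0$ for indices $k$ outside the admissible range (an empty sum), so no harm is done by letting $k$ run from $0$ to $\wtcfixed$ as the statement does.

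No step here is really an obstacle; the only point worth care is making sure the orbit classification is stated correctly and that one checks independence is not being claimed (only spanning is asserted, and indeed the $\invgen{d}{k}$ with admissible $k$ form a basis while the others are zero). The proof is essentially an application of the standard principle that, for a finite group acting by permutations of a basis of a vector space, the invariant subspace has the orbit sums as a basis.
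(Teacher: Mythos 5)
Your proof is correct and is essentially the paper's argument, just written out in more detail: the paper likewise observes that the multilinear monomials in the $(-1)^{v_j}$ form a basis of $\dpolyspace$ permuted by the stabilizer $S_{\coordone}\times S_{\coordzero}$ of $\cfixed$, so that the invariants are spanned by the orbit sums, which are exactly the $\invgen{d}{k}$. Your extra care about the admissible range of $k$ (with the out-of-range $\invgen{d}{k}$ being empty sums) is a reasonable clarification but not a departure from the paper's route.
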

\begin{proof}
The result follows immediately from the requirement that any
$\dhp\in \dzpolyspaced{d}$ be invariant under all permutations
simultaneously permuting the $\wtcfixed$ nonzero coordinates of $\cfixed$
and the $n-\wtcfixed$ vanishing coordinates in $\cfixed$, together with
the fact that the multilinear monomials in the variables $(-1)^{v_j}$
are a basis for~$\dpolyspace$.
\end{proof}

Additionally, we have a combinatorial formula for $\invgen{d}{k}(v)$.
\begin{prop}\label{prop:combform}
We have
\begin{multline}
\invgen{d}{k}(v) =
\left(
 \sum_{i=0}^k (-1)^i
  \binom{\wt(\isectcode{v}{\cfixed})}{i}
  \binom{\wtcfixed-\wt(\isectcode{v}{\cfixed})}{k-i}
\right)
  \times\\
\left(
 \sum_{i=0}^{d-k}
  \binom{\wt(v)-\wt(\isectcode{v}{\cfixed})}{i}
  \binom{\left(n-\wtcfixed\right)-\left(\wt(v)-\wt(\isectcode{v}{\cfixed})\right)}{d-k-i}
\right).
\label{eq:Qdkv}
\end{multline}
\end{prop}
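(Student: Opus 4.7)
The plan is to verify the formula by directly unpacking the definition of $\invgen{d}{k}(v)$ combinatorially. The key observation is that the defining sum factors as a product of two independent sums, one indexed by subsets of $\coordone$ and one by subsets of $\coordzero$; once factored, each factor can be evaluated by grouping choices according to how many selected indices $j$ satisfy $v_j = 1$.

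First I would rewrite
\[
\invgen{d}{k}(v)
 = \Biggl(\sum_{\substack{S \subseteq \coordone \\ |S| = k}} \prod_{j \in S}(-1)^{v_j}\Biggr)
   \cdot
   \Biggl(\sum_{\substack{T \subseteq \coordzero \\ |T| = d-k}} \prod_{j \in T}(-1)^{v_j}\Biggr),
\]
which is immediate because the two choices $\{j_1,\ldots,j_k\}\subseteq\coordone$ and $\{j_{k+1},\ldots,j_d\}\subseteq\coordzero$ are independent and the monomial $(-1)^{v_{j_1}}\cdots(-1)^{v_{j_d}}$ factors along this partition. So it suffices to evaluate each factor.

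Next I would parameterize each factor by the number $i$ of selected indices at which $v_j=1$. In $\coordone$, the number of positions with $v_j = 1$ is exactly $\wt(\isectcode{v}{\cfixed})$, and the number with $v_j = 0$ is $\wtcfixed - \wt(\isectcode{v}{\cfixed})$; a subset $S$ of size $k$ consisting of $i$ indices of the first type and $k-i$ of the second contributes $(-1)^i$, and the number of such subsets is $\binom{\wt(\isectcode{v}{\cfixed})}{i}\binom{\wtcfixed-\wt(\isectcode{v}{\cfixed})}{k-i}$. Summing over $i$ yields the first factor in~(\ref{eq:Qdkv}). The analogous count on $\coordzero$, where the number of positions with $v_j = 1$ is $\wt(v) - \wt(\isectcode{v}{\cfixed})$ and the number with $v_j = 0$ is $(n-\wtcfixed) - (\wt(v) - \wt(\isectcode{v}{\cfixed}))$, produces the second factor.

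There is no genuine obstacle here — the proposition is a direct combinatorial identity obtained by factoring and then stratifying the sums by the inner-product statistic $\wt(\isectcode{v}{\cfixed})$ and by $\wt(v) - \wt(\isectcode{v}{\cfixed})$, which jointly determine the distribution of $1$'s of $v$ across $\coordone$ and $\coordzero$. The only step requiring care is bookkeeping of signs in the second factor, since each index selected from $\coordzero$ at which $v_j = 1$ again contributes a factor of $-1$ to the monomial; this should be reflected by a $(-1)^i$ in the second sum as well, matching the pattern of the first sum.
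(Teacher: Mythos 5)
Your argument is precisely the evaluation that the paper itself invokes --- its entire proof is the one sentence that the formula ``is immediately obtained from evaluation of the expression \eqref{eq:meetinvgen}'' --- so in method you and the authors coincide: the factorization of the defining sum over $\coordone$ and $\coordzero$, and the stratification of each factor by the number $i$ of selected indices with $v_j=1$, are both correct. The valuable part of your write-up is the closing observation about signs: you are right that the second factor must also carry a $(-1)^i$, and the displayed formula \eqref{eq:Qdkv} omits it. This is not cosmetic: without the sign the second sum is a Vandermonde convolution and collapses to the constant $\binom{n-\wtcfixed}{d-k}$, which would make $\invgen{d}{k}(v)$ depend only on $\wt(\isectcode{v}{\cfixed})$; already for $d=1$, $k=0$, $n=2$, $\cfixed=(1,0)$ the definition gives $\invgen{1}{0}(v)=(-1)^{v_2}$, whereas the printed formula evaluates to the constant $1$. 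So your proof is sound and establishes the corrected identity, in which the second factor reads
$$
\sum_{i=0}^{d-k}(-1)^i\binom{\wt(v)-\wt(\isectcode{v}{\cfixed})}{i}\binom{\left(n-\wtcfixed\right)-\left(\wt(v)-\wt(\isectcode{v}{\cfixed})\right)}{d-k-i};
$$
the discrepancy is a typographical error in the proposition as stated, not a gap in your argument.
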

\noindent The proof of Proposition~\ref{prop:combform} is immediately
obtained from evaluation of the expression \eqref{eq:meetinvgen}
for~$\invgen{d}{k}$.

\subsubsection{The action of $\Xoperpn$ on $\invgen{d}{k}$}
Now, we determine the action of $\Xoperpn$ on the polynomials
$\left\{\invgen{d}{k}\right\}_{k=0}^{\wtcfixed}$.
\begin{lemma}\label{lem:Xonzone}
We have
\begin{equation}
\Xoperpn \invgen{d}{k}=\bigl((n-\wtcfixed)-(d-k-1)\bigr)\invgen{d-1}{k}
 + \bigl(\wtcfixed-(k-1)\bigr)\invgen{d-1}{k-1}.
\label{eq:XQ_lemma}
\end{equation}
\end{lemma}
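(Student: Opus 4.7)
The plan is to unwind the definitions and observe that $\Xoperpn$ acts on the multilinear monomials $(-1)^{v_{j_1}}\cdots(-1)^{v_{j_d}}$ in a very simple way, then to perform a direct bookkeeping argument in the double sum defining $\invgen{d}{k}$.

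First I would compute the action of $\Xoperp$ on the two basis vectors of $\module_1$. Since $\Xoperp = \FToper^{-1}\Xoper\FToper$, and $\FToper$ interchanges the bases $\bigl\{\mono{1}{0},\mono{0}{1}\bigr\}$ and $\bigl\{\mono{1}{1},\mono{1}{-1}\bigr\}$ (up to scalars), the computation already performed in Section~\ref{sec:d_harmonic_poly_section} gives $\Xoperp\mono{1}{-1}=\mono{1}{1}$ and $\Xoperp\mono{1}{1}=0$. In the language of polynomials in the variables $(-1)^{v_j}$, this says that the $j$-th summand $\slmatone{\Xoperp}$ of $\Xoperpn$ replaces the factor $(-1)^{v_j}$ by $1$ in any monomial, and kills any monomial in which $(-1)^{v_j}$ does not appear.

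Next I would apply this observation termwise to the defining expansion~\eqref{eq:meetinvgen} of $\invgen{d}{k}$. For a single summand indexed by subsets $\{j_1,\ldots,j_k\}\subseteq\coordone$ and $\{j_{k+1},\ldots,j_d\}\subseteq\coordzero$, the only nonzero contributions to $\Xoperpn$ come from the $d$ coordinates $j_1,\ldots,j_d$, each giving the degree-$(d-1)$ monomial obtained by deleting one of those factors. Thus $\Xoperpn\invgen{d}{k}$ is a sum over all pairs (subset of $\coordone$, subset of $\coordzero$) of total size $d$ and a further choice of one index to delete; collecting according to whether the deleted index lies in $\coordone$ or $\coordzero$ splits the result into a part that looks like $\invgen{d-1}{k-1}$ and a part that looks like $\invgen{d-1}{k}$.

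The final step is the multiplicity count, which is the main (and only) obstacle, but a routine combinatorial one. A fixed monomial appearing in $\invgen{d-1}{k-1}$ is indexed by subsets $S_1\subseteq\coordone$ of size $k-1$ and $S_0\subseteq\coordzero$ of size $d-k$; it arises from $\Xoperpn\invgen{d}{k}$ whenever we pick a $k$-subset $S_1\cup\{j\}$ with $j\in\coordone\setminus S_1$ and then delete $j$, so its multiplicity is $|\coordone\setminus S_1|=\wtcfixed-(k-1)$. Similarly, a fixed monomial in $\invgen{d-1}{k}$ arises with multiplicity $|\coordzero\setminus S_0|=(n-\wtcfixed)-(d-k-1)$. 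Assembling these counts yields exactly~\eqref{eq:XQ_lemma}, completing the proof.
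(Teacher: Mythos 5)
Your proposal is correct and follows essentially the same route as the paper's proof: first establish that each summand of $\Xoperpn$ deletes one factor $(-1)^{v_{j_\ell}}$ from a multilinear monomial (the paper's equation~\eqref{eq:Xactonzonterm}, which you derive from $\Xoperp\mono{1}{-1}=\mono{1}{1}$ and $\Xoperp\mono{1}{1}=0$), then determine the two coefficients by counting how many monomials of $\invgen{d}{k}$ map onto a fixed monomial of $\invgen{d-1}{k-1}$ or $\invgen{d-1}{k}$. Your multiplicity counts $\wtcfixed-(k-1)$ and $(n-\wtcfixed)-(d-k-1)$ agree with the paper's, and you spell out the bookkeeping slightly more explicitly than the paper does.
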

\begin{proof}First, we observe that
\begin{equation}\label{eq:Xactonzonterm}
\Xoperpn\cdot \left((-1)^{v_{j_1}+\cdots +v_{j_{d}}}\right)
= \sum_{\ell=1}^d(-1)^{v_{j_0}+v_{j_1}+\cdots
 + v_{j_{\ell-1}}+v_{j_{\ell+1}}+\cdots +v_{j_{d}}+v_{j_{d+1}}},
\end{equation}
where we have used the convention that $v_{j_0}=0=v_{j_{d+1}}$.\footnote{
  To avoid having to adopt this convention, we could have used the
  slightly more standard notation
  $\sum_{\ell=1}^d(-1)^{v_{j_1} + \cdots + \widehat{v_{j_{\ell}}}+\cdots +v_{j_{d}}}$.
  We opt not to use this notation because it conflicts with our
  usage of $\htr{\cdot}$ for the discrete Fourier transform.
  }
It then follows from \eqref{eq:Xactonzonterm} that
$$
\Xoperpn \invgen{d}{k}=\konstant_{k}\cdot\invgen{d-1}{k}+\konstant_{k-1}
 \cdot \invgen{d-1}{k-1}
$$ for constants $\konstant_{k}, \konstant_{k-1}\in\Z$.
To see that
$$
\konstant_{k-1}=\wtcfixed-(k-1),
$$
we observe that each monomial term in $\invgen{d-1}{k}$ can
arise from $\wtcfixed-(k-1)$ different monomial terms in $\invgen{d}{k}$.
Likewise, we obtain
\begin{align*}\konstant_{k}=(n-\wtcfixed)-(d-k-1).&\qedhere\end{align*}
\end{proof}

\subsection{Determination of the Zonal Harmonic Polynomials}

We now combine Lemma~\ref{lem:polygen} and Lemma~\ref{lem:Xonzone}
to characterize $\dzharmspaced{d}$.
\begin{prop}\label{prop:allzonals}
If $\dhp\in \dzharmspaced{d}$, then $\dhp = \konstant_0\cdot
\dzoharmd{d}{\cfixed}$ for some constant $\konstant_0\in\C$,
where
\begin{equation}
\dzoharmd{d}{\cfixed}(v)\defeq
 \sum_{k=0}^d(-1)^{k}
  \left(\prod_{\ell=0}^{k-1}
   \frac{(n-\wtcfixed)-(d-\ell-1)}{\wtcfixed-\ell}\right)
   \invgen{d}{k}(v).
\label{eq:Qprop}
\end{equation}
\end{prop}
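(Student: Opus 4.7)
My plan is to reduce the problem to a first-order linear recurrence on coefficients and solve it explicitly. By Lemma~\ref{lem:polygen}, any $\dhp \in \dzpolyspaced{d}$ admits an expansion $\dhp = \sum_{k=0}^{d} a_k \invgen{d}{k}$. I will determine which coefficient sequences $(a_k)$ make $\dhp$ harmonic (i.e., satisfy $\Xoperpn \dhp = 0$), and show that the only such sequences are scalar multiples of the one defining $\dzoharmd{d}{\cfixed}$.

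Applying $\Xoperpn$ term by term using Lemma~\ref{lem:Xonzone} and reindexing the sum yields
\[
\Xoperpn \dhp = \sum_{j=0}^{d-1}\Bigl[\,a_j\bigl((n-\wtcfixed)-(d-j-1)\bigr) + a_{j+1}(\wtcfixed - j)\,\Bigr]\invgen{d-1}{j},
\]
where the boundary contributions involving $\invgen{d-1}{-1}$ and $\invgen{d-1}{d}$ vanish by convention. Assuming that the polynomials $\{\invgen{d-1}{j}\}_{j=0}^{d-1}$ are linearly independent in $\dzpolyspaced{d-1}$, the condition $\Xoperpn \dhp = 0$ is equivalent to the first-order recurrence
\[
a_{j+1} = -a_j \cdot \frac{(n-\wtcfixed)-(d-j-1)}{\wtcfixed - j}, \qquad j = 0, 1, \ldots, d-1.
\]
The hypothesis $d \leq \wtcfixed$ ensures $\wtcfixed - j > 0$ for all relevant $j$, so iterating from an arbitrary $a_0 \in \C$ gives exactly the product formula appearing in~\eqref{eq:Qprop}, establishing both uniqueness and the stated expression for $\dzoharmd{d}{\cfixed}$.

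The principal obstacle is the linear independence of $\{\invgen{d-1}{j}\}_{j=0}^{d-1}$. I plan to establish this via Proposition~\ref{prop:combform}, which expresses $\invgen{d-1}{j}(v)$ as the product of a polynomial of degree exactly $j$ in $\wt(\isectcode{v}{\cfixed})$ and a polynomial of degree exactly $d-1-j$ in $\wt(v) - \wt(\isectcode{v}{\cfixed})$. As $v$ ranges over $\F_2^n$, the pair $(\wt(\isectcode{v}{\cfixed}),\,\wt(v)-\wt(\isectcode{v}{\cfixed}))$ takes every value in $\{0,\ldots,\wtcfixed\}\times\{0,\ldots,n-\wtcfixed\}$, and a standard bivariate Vandermonde argument applied to the distinct bidegrees $(j, d-1-j)$ forces any vanishing $\C$-linear combination of the $\invgen{d-1}{j}$ to have all coefficients zero. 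This closes the argument.
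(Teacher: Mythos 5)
Your proof is correct and follows essentially the same route as the paper's: expand $\dhp$ in the $\invgen{d}{k}$ via Lemma~\ref{lem:polygen}, apply Lemma~\ref{lem:Xonzone}, and compare coefficients to obtain and solve the first-order recurrence $\konstant_{j+1} = -\konstant_j\bigl((n-\wtcfixed)-(d-j-1)\bigr)/(\wtcfixed-j)$. The linear-independence point you flag (which the paper leaves implicit in ``comparing coefficients'') is genuine but is settled far more directly than by your Vandermonde argument via Proposition~\ref{prop:combform}: for distinct $j$ the polynomials $\invgen{d-1}{j}$ are sums over disjoint sets of multilinear monomials in the variables $(-1)^{v_i}$, and these monomials form a basis of $\dpolyspace$.
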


\begin{proof} We consider some $\dhp\in \dzharmspaced{d}=\dzpolyspaced{d}\cap
\dharmspaced{d}$.  By Lemma~\ref{lem:polygen}, there exist constants
$\{\konstant_k\}_{k=0}^{\wtcfixed}\subset \C$ such that
$$
\dhp=\sum_{k=0}^{\wtcfixed}\konstant_k\cdot \invgen{d}{k}.
$$
Since $\dhp\in \dharmspaced{d}$, we have
\begin{align*}
0&=\Xoperpn\dhp=\Xoperpn\left(\sum_{k=0}^{\wtcfixed}\konstant_k\cdot
\invgen{d}{k}\right)=\sum_{k=0}^{\wtcfixed}\konstant_k
  \cdot \Xoperpn\invgen{d}{k}\\
&=\sum_{k=0}^{\wtcfixed}\konstant_k \cdot
 \bigl(((n-\wtcfixed)-(d-k-1)\bigr)\invgen{d-1}{k}
  + \bigl(\wtcfixed-(k-1))\invgen{d-1}{k-1}\bigr)
\\
& = \sum_{k=0}^{\wtcfixed}
 \Bigl(\konstant_k\bigl((n-\wtcfixed)-(d-k-1)\bigr)
 + \konstant_{k+1} (\wtcfixed-(k))\Bigr)\invgen{d-1}{k}.
\end{align*}
(The penultimate equality follows from Lemma~\ref{lem:Xonzone}.)
By comparing coefficients, we then obtain
$$
\konstant_{k+1}
= -\frac{(n-\wtcfixed)-(d-k-1)}{\wtcfixed-k} \, \konstant_{k}
$$
for each $k$ ($0\leq k\leq \wtcfixed-1$); the result follows.
\end{proof}
\begin{cor}
For each $d$ ($0\leq d\leq \wtcfixed$), we have $\dim(\dzharmspaced{d})=1$.
\end{cor}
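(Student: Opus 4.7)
By Proposition~\ref{prop:allzonals}, every $\dhp \in \dzharmspaced{d}$ is a scalar multiple of the single polynomial $\dzoharmd{d}{\cfixed}$ exhibited in~(\ref{eq:Qprop}), so the bound $\dim(\dzharmspaced{d}) \leq 1$ is already in hand. My plan is to establish the reverse inequality by verifying that $\dzoharmd{d}{\cfixed}$ itself is a nonzero element of $\dzharmspaced{d}$; its linear span will then witness $\dim(\dzharmspaced{d}) = 1$.

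Membership in $\dzpolyspaced{d}$ is immediate, since $\dzoharmd{d}{\cfixed}$ is a $\C$-linear combination of the invariants $\invgen{d}{k}$. For harmonicity, abbreviate the coefficients from~(\ref{eq:Qprop}) as $\konstant_k = (-1)^k \prod_{\ell=0}^{k-1}\bigl((n-\wtcfixed)-(d-\ell-1)\bigr)/(\wtcfixed-\ell)$. Applying Lemma~\ref{lem:Xonzone} term by term yields
\begin{equation*}
\Xoperpn \dzoharmd{d}{\cfixed}
 = \sum_{k} \Bigl( \konstant_k \bigl((n-\wtcfixed)-(d-k-1)\bigr)
   + \konstant_{k+1}(\wtcfixed - k) \Bigr) \invgen{d-1}{k},
\end{equation*}
and the $\konstant_k$ were engineered to satisfy exactly the recursion $\konstant_{k+1}(\wtcfixed - k) = -\konstant_k\bigl((n-\wtcfixed)-(d-k-1)\bigr)$ that was derived in the proof of Proposition~\ref{prop:allzonals}; hence every bracket vanishes and $\dzoharmd{d}{\cfixed} \in \dharmspaced{d}$.

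The only genuine content is therefore nonvanishing of $\dzoharmd{d}{\cfixed}$. The polynomials $\invgen{d}{k}$ are manifestly linearly independent in $\dpolyspaced{d}$, since each multilinear monomial $(-1)^{v_{j_1}} \cdots (-1)^{v_{j_d}}$ with exactly $k$ of its indices lying in $\coordone$ appears as a summand of $\invgen{d}{k}$ and in no other $\invgen{d}{k'}$. Assuming WLOG that $\wtcfixed \leq n/2$ (equivalently, $d \leq \wtcfixed \leq n - \wtcfixed$), the term at $k=0$ has coefficient $\konstant_0 = 1$ and involves the nonzero polynomial $\invgen{d}{0}$, which is the sum of the $\binom{n-\wtcfixed}{d} \geq 1$ multilinear monomials supported on $\coordzero$; linear independence then forces $\dzoharmd{d}{\cfixed} \neq 0$. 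The one mild subtlety in the plan is dispensing with the hypothesis $\wtcfixed \leq n/2$: the involution $\cfixed \leftrightarrow \onevec - \cfixed$ swaps $\coordone$ with $\coordzero$ but preserves both $\dzpolyspaced{d}$ and $\dharmspaced{d}$ (and hence their intersection $\dzharmspaced{d}$), so the case $\wtcfixed > n/2$ reduces to the case above.
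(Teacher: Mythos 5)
Your main argument is sound and supplies exactly what the paper leaves unsaid: the Corollary is stated with no proof, and Proposition~\ref{prop:allzonals} by itself yields only the upper bound $\dim(\dzharmspaced{d})\leq 1$. You correctly identify that the real content is the verification that $\dzoharmd{d}{\cfixed}$ is a nonzero element of $\dzharmspaced{d}$: invariance is by construction, harmonicity holds because the coefficients $\konstant_k$ of~\eqref{eq:Qprop} satisfy precisely the recursion extracted from Lemma~\ref{lem:Xonzone}, and nonvanishing holds because the multilinear monomials supported on $\coordzero$ occur in $\dzoharmd{d}{\cfixed}$ with coefficient $\konstant_0=1$ and in no $\invgen{d}{k}$ with $k>0$. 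All of this is correct --- \emph{provided} such monomials exist, i.e.\ provided $d\leq n-\wtcfixed$, so that $\invgen{d}{0}\neq 0$.

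The closing reduction, however, does not dispose of the remaining case. The involution $\cfixed\leftrightarrow\onevec-\cfixed$ does preserve $\dzpolyspaced{d}$, $\dharmspaced{d}$, and hence $\dzharmspaced{d}$, but it also replaces the range of the statement: applying your main argument to $\onevec-\cfixed$ proves $\dim(\dzharmspaced{d})=1$ only for $0\leq d\leq n-\wtcfixed$, which is exactly the range you had already handled, and leaves $n-\wtcfixed<d\leq\wtcfixed$ untouched. No patch can cover that range, because the statement is false there: take $\cfixed=\onevec$, so that $\coordzero=\emptyset$ and $\dzpolyspaced{d}$ is spanned by $\invgen{d}{d}$ alone; then Lemma~\ref{lem:Xonzone} gives $\Xoperpn\invgen{d}{d}=(n-d+1)\invgen{d-1}{d-1}\neq 0$, so $\dzharmspaced{d}=\{0\}$ for every $d\geq 1$ (and for $d>n/2$ one sees this even more quickly from Corollary~\ref{cor:d>n/2}). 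The same degeneration occurs whenever $n-\wtcfixed<d\leq\wtcfixed$: the harmonicity recursion, started at the first index $k$ with $\invgen{d}{k}\neq 0$, forces all the $\konstant_k$ to vanish. So the Corollary carries the implicit hypothesis $d\leq n-\wtcfixed$ --- the unproved assertion just after~\eqref{eq:meetinvgen} that every $\invgen{d}{k}$ is a nonempty sum --- which is harmless in the paper's applications since there $d$ is always small. Under that hypothesis your proof is complete; your final ``WLOG'' sentence should be replaced by that hypothesis rather than by the complementation argument.
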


   \section{$t$-Designs and Extremal Type~II Codes}
      \label{sec:Assmus-Mattson}%this is the assmat_section

\label{sec:assmat}

A \defn{$t$-$(n,w,\lambda)$-design} is a (possibly empty\footnote{
  Again we allow $\somedesign=\emptyset$, which is a
  \hbox{$t$-$(n,w,0)$-design} for all~$t$ and~$w$.
  As with spherical designs,
  for most applications only nonempty $\somedesign$ are of interest,
  but allowing empty designs simplifies the statements of
  the results relating codes with combinatorial designs.
  })
collection $\somedesign$ of distinct $w$-element subsets of
$\{1,\ldots, n\}$ with the property that
$|\{S'\in \somedesign\setsep S\subseteq S'\}|=\lambda$
for every $S\subset \{1,\ldots, n\}$ with $|S|=t$.
This generalizes the notion of a \defn{Steiner system},
which is a \hbox{$t$-$(n,w,1)$} design.
For example, the codewords of weight $4$ in the
extended Hamming code form a \hbox{$3$-$(8,4,1)$-design},
and the codewords of weight~$12$ in the extended binary Golay code
form a \hbox{$5$-$(24,12,48)$} design.  We shall see that these are
special cases of behavior common to all extremal Type~II codes.
When $n$, $w$, and $\lambda$ are undetermined or clear from context,
we omit the qualifier ``$(n,w,\lambda)$'' and simply refer to a
$t$-$(n,w,\lambda)$-design as a \defn{$t$-design}.
(See \cite{CvL} for more about $t$-designs, their uses and
their relations with error-correcting codes.)

\subsection{An Equivalent Characterization of $t$-designs}
Each $S'\in \somedesign$ may be represented by its
\defn{indicator vector} $(c_1,\ldots, c_n)$,
in which $c_j=1$ if and only if $j\in S'$.
Thus, a \hbox{$t$-$(n,w,\lambda)$-design $\somedesign$}
corresponds to a subset of the \defn{Hamming sphere of radius $w$},
$$
\hammingsphere_w\defeq\{v\in\F_2^n\setsep \wt(v)=w\}.
$$
We henceforth treat this representation of $\somedesign$ as
completely equivalent to the setwise representation of~$\somedesign$,
using the relevant terminology interchangeably.

We now introduce the following equivalent characterization of $t$-designs.

\begin{prop}\label{prop:equivprop}
A set $\somedesign\subseteq \hammingsphere_w$ is a $t$-design if
and only if
$$
\sum_{v\in \somedesign}\dhp(v)=0
$$
for all $\dhp\in
\inlinecup_{d=1}^t\dharmspaced{d}$.
\end{prop}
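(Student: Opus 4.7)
I will parallel the spherical design criterion (Lemma~\ref{lemma:design_sph}) as closely as possible. First, the combinatorial $t$-design property on $\somedesign \subseteq \hammingsphere_w$ is equivalent to the linearized condition
\begin{equation*}
\sum_{v\in \somedesign} P(v) = |\somedesign|\cdot A_w(P) \qquad\text{for every } P\in \dpolyspaced{d},\ d\le t,
\end{equation*}
where $A_w(P) := |\hammingsphere_w|^{-1}\sum_{v\in \hammingsphere_w} P(v)$ is the uniform average on the Hamming sphere; via $v_j = (1-(-1)^{v_j})/2$ and linearity this is simply the standard fact that a $t$-design is also an $s$-design for every $s\le t$ with the expected derived parameters.

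The key auxiliary result, analogous to Lemma~\ref{lemma:A(sph)=0}, is that $A_w$ annihilates every positive-degree discrete harmonic polynomial. Since $A_w$ is $S_n$-invariant and $\dharmspaced{d}$ is $S_n$-stable (because $\Xoperpn$ commutes with coordinate permutations), $A_w(P) = A_w(\mathrm{Sym}(P))$, so it suffices to prove $(\dharmspaced{d})^{S_n} = \{0\}$ for $d\ge 1$. The $S_n$-invariants in $\dpolyspaced{d}$ are spanned by $e_d := \sum_{|S|=d}\prod_{j\in S}(-1)^{v_j}$, and a direct tensor calculation gives $\Yoperpn e_{d-1} = d\cdot e_d$. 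Thus $e_d$ lies entirely in the complement $\Yoperpn\dpolyspaced{d-1}$ of $\dharmspaced{d}$ inside the decomposition $\dpolyspaced{d} = \dharmspaced{d}\oplus \Yoperpn\dpolyspaced{d-1}$ of Proposition~\ref{prop:dharmdecomp}(2), so no nonzero harmonic polynomial of positive degree is $S_n$-invariant. The forward direction of the proposition is then immediate from these two observations.

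For the converse, I induct on $d\le t$ to prove $\sum_{v\in \somedesign} P(v) = |\somedesign|\,A_w(P)$ for every $P\in \dpolyspaced{d}$, the case $d=0$ being trivial. For $d\ge 1$, decompose $P = P^h + \Yoperpn R$ via Proposition~\ref{prop:dharmdecomp}(2), with $P^h\in \dharmspaced{d}$ and $R\in \dpolyspaced{d-1}$; the $P^h$-contribution vanishes on both sides by the hypothesis and the preceding paragraph. To handle $\Yoperpn R$, I use the algebraic identity
\begin{equation*}
(\Xoperpn+\Yoperpn)\,Q = e_1\cdot Q, \qquad e_1 := \sum_{j=1}^n (-1)^{v_j},
\end{equation*}
which follows from $\Xoperp+\Yoperp = \Hoper$ (visible from the formulas~\eqref{eq:XHY_conj}) together with the observation that $\Hoper$ acts on $\module_1$ as multiplication by $(-1)^{v}$. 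Because $e_1$ takes the constant value $n-2w$ on $\hammingsphere_w$, this identity specializes to $(\Yoperpn R)(v) = (n-2w)R(v) - (\Xoperpn R)(v)$ for $v\in \hammingsphere_w$, and likewise $A_w(\Yoperpn R) = (n-2w)A_w(R) - A_w(\Xoperpn R)$. Since $R\in \dpolyspaced{d-1}$ and $\Xoperpn R \in \dpolyspaced{d-2}$, the inductive hypothesis applies to both, and the two identities combine to give $\sum_{v\in \somedesign}(\Yoperpn R)(v) = |\somedesign|\,A_w(\Yoperpn R)$.

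The main obstacle is precisely this converse step: an arbitrary element of $\Yoperpn\dpolyspaced{d-1}$ is not harmonic, so the hypothesis does not apply directly. In the spherical proof the analogous reduction is trivial because $\sF|_{\Sigma_\nu} = \nu$; here the identity $\Xoperpn+\Yoperpn = e_1\cdot$ plays the analogous role, trading the non-harmonic $\Yoperpn$-direction against strictly lower degrees on $\hammingsphere_w$ and closing the induction.
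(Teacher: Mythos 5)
Your proof is correct, and it reaches the result by a genuinely different route from the paper's. Both arguments reduce to the same structural difficulty: after splitting $\dpolyspaced{d} = \dharmspaced{d}\oplus\Yoperpn\dpolyspaced{d-1}$ one must control the non-harmonic summand on the Hamming sphere. The paper does this by projecting to the $(n-2w)$-eigenspace of $\Hopern$ (Lemma~\ref{lem:multchar}) and computing explicitly, inside the irreducible module $\module_{n-2d}$ realized as binary forms in $\upp{0},\upp{1}$, that $\proj{n-2w}{(\Yoperpn)^k\dhp}$ is a scalar multiple of $\proj{n-2w}{\dhp}$ \thinspace---\thinspace the exact analogue of $\sF^kP\vert_{\Sigma_\nu}=\nu^k P\vert_{\Sigma_\nu}$ in the spherical proof. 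You instead exploit the operator identity $\Xoperpn+\Yoperpn=\Hopern$, i.e.\ multiplication by $e_1=\sum_j(-1)^{v_j}$, which is the constant $n-2w$ on $\hammingsphere_w$; this trades $\Yoperpn R$ on the sphere for $(n-2w)R-\Xoperpn R$, whose terms have strictly lower degree, and closes a strengthened induction over all of $\dpolyspaced{d}$ rather than just the harmonic parts. Your preliminary step \thinspace---\thinspace showing $(\dharmspaced{d})^{S_n}=\{0\}$ via $\Yoperpn e_{d-1}=d\,e_d$ and the $S_n$-equivariance of the decomposition, hence $A_w(\dharmspaced{d})=0$ for $d\geq 1$ \thinspace---\thinspace is a clean discrete analogue of Lemma~\ref{lemma:A(sph)=0} that the paper leaves implicit. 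What your route buys is a closer parallel to Lemmas~\ref{lemma:A(sph)=0} and~\ref{lemma:design_sph} and no explicit computation in the realization of $\module_{n-2d}$; what the paper's buys is the explicit constant relating $\proj{n-2w}{(\Yoperpn)^k\dhp}$ to $\proj{n-2w}{\dhp}$, noted in its closing remark to be independent of $\dhp$. Two shared, minor caveats: both arguments invoke Proposition~\ref{prop:dharmdecomp}(2), which is stated only for $d\leq n/2$, and both tacitly use the standard fact that a $t$-design is an $s$-design for $s\leq t$ when passing from the combinatorial definition to the linearized condition on all of $\bigoplus_{d\leq t}\dpolyspaced{d}$ (the paper states the resulting characterization separately as Corollary~\ref{cor:equivprop2}).
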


Proposition~\ref{prop:equivprop} is equivalent to Theorem~7 of
Delsarte~\cite{Delsarte:Hahn}.
Our development of $\dharmspace$ leads to a new proof of this result,
which we present below.  In Section~\ref{sec:assmatforreal}, we apply
Proposition~\ref{prop:equivprop} to prove a special case of the
Assmus\thmnamesep Mattson theorem~\cite{assmusmattson}.

Throughout this section, we write $\charf_{X}$ for the characteristic
function of the set $X$, and recall that $\Hopern$ denotes
the action of $\Hoper$ on $\module_1^{\otimes n}$,
\begin{equation}
\Hopern\defeq\sum\slmatone{\Hoper}.
\label{eq:Hopern}
\end{equation}
We begin with a lemma regarding projections of functions
$\dhp\in \dpolyspace$ to the Hamming sphere $\hammingsphere_w$.

\begin{lemma}\label{lem:multchar}
For $\dhp\in \dpolyspace$, we have
$\charf_{\hammingsphere_w} \dhp=\proj{n-2w}{\dhp}$,
where $\proj{n-2w}{\dhp}$ is the projection of $\dhp$ to the
$n-2w$ eigenspace of the action of $\Hopern$ on $\module_1^{\otimes n}$.
\end{lemma}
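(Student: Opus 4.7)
The plan is to identify the eigenspaces of $\Hopern$ very concretely using the delta-function basis, and then recognize multiplication by $\charf_{\hammingsphere_w}$ as the projection onto the appropriate eigenspace.

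First I would recall that, under the identification of functions on $\F_2^n$ with $\module_1^{\otimes n}$, the delta function $\charf_{\{v\}}$ at a point $v=(v_1,\ldots,v_n)\in\F_2^n$ corresponds to the pure tensor $\bigotimes_{j=1}^n e_{v_j}$, where $e_0=\mono{1}{0}$ and $e_1=\mono{0}{1}$. A direct computation gives $\Hoper e_0 = e_0$ and $\Hoper e_1 = -e_1$, so each $e_{v_j}$ is an eigenvector of $\Hoper$ with eigenvalue $1-2v_j$.

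Next I would apply the definition $\Hopern=\sum\slmatone{\Hoper}$: since $\Hopern$ acts as $\Hoper$ on the $j$-th factor and as the identity on the others, it acts on the pure tensor $\bigotimes_j e_{v_j}$ with eigenvalue
\begin{equation*}
\sum_{j=1}^n (1-2v_j) \;=\; n - 2\wt(v).
\end{equation*}
Thus each $\charf_{\{v\}}$ is a $\Hopern$-eigenvector, the full set $\{\charf_{\{v\}}\}_{v\in\F_2^n}$ is a basis of $\dpolyspace=\module_1^{\otimes n}$ consisting of eigenvectors, and in particular $\Hopern$ is diagonalizable with eigenvalues in $\{n, n-2, \ldots, -n\}$. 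Grouping the basis vectors by eigenvalue, the $(n-2w)$-eigenspace of $\Hopern$ is exactly the span of $\{\charf_{\{v\}} \setsep \wt(v)=w\}$, i.e.\ the space of functions supported on $\hammingsphere_w$.

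Finally I would expand an arbitrary $\dhp\in\dpolyspace$ in the delta-function basis as $\dhp=\sum_{v\in\F_2^n}\dhp(v)\,\charf_{\{v\}}$. The projection $\proj{n-2w}{\dhp}$ onto the $(n-2w)$-eigenspace simply retains the terms with $\wt(v)=w$, giving
\begin{equation*}
\proj{n-2w}{\dhp} \;=\; \sum_{\wt(v)=w}\dhp(v)\,\charf_{\{v\}} \;=\; \charf_{\hammingsphere_w}\dhp,
\end{equation*}
which is the desired identity. There is no real obstacle here: the whole lemma reduces to the observation that the natural eigenbasis of $\Hopern$ is the delta-function basis, with eigenvalues tracking the Hamming weight; the only thing to be careful about is aligning the two descriptions of the tensor factors $e_{0}=\mono{1}{0}$, $e_{1}=\mono{0}{1}$ so that the eigenvalue computation $1-2v_j$ comes out with the correct sign.
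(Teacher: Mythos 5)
Your proof is correct and is essentially the paper's argument, just written out in full: the paper's one-line proof simply notes that the $1$- and $(-1)$-eigenspaces of $\Hoper$ are spanned by $\mono{1}{0}$ and $\mono{0}{1}$ respectively, which is exactly your observation that the delta-function basis diagonalizes $\Hopern$ with eigenvalue $n-2\wt(v)$. Your sign bookkeeping ($e_{v_j}$ has eigenvalue $1-2v_j$) matches the paper's conventions, so there is nothing to correct.
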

\begin{proof}
This is immediate because the $1$- and $(-1)$-eigenspaces of $\Hoper$
are respectively spanned by $\left\{\mono{1}{0}\right\}$ and
$\left\{\mono{0}{1}\right\}$.
\end{proof}

We now demonstrate Proposition~\ref{prop:equivprop}.

\begin{proof}[Proof of Proposition~\ref{prop:equivprop}]
We denote by $\dhpkspace$ the subset of $\module_1^{\otimes n}$ consisting
of tensor products of $t$ copies of $\mono{0}{1}$ or $\mono{1}{0}$
and $n-t$ copies of $\mono{1}{1}$.   It is clear that $\dhpkspace$
spans $\dpolyspaced{d}$ for any $d$ ($0\leq d\leq t$).
Now, the set $\somedesign$ is a $t$-design if and only if, for all
$\dhpk\in\dhpkspace$,
$$
\vert\hammingsphere_w\vert \inprodch{\charf_{\somedesign}}{\dhpk}
= \vert\somedesign\vert \inprodch{\charf_{\hammingsphere_w}}{\dhpk},
$$
where $\vert\cdot\vert$ is the cardinality function
and $\inprodch{\cdot}{\cdot}$ is the inner product.
It therefore suffices to show that the set
% of restrictions $\{\restr{\dhpk}{\hammingsphere_w}\setsep\dhpk\in \dhpkspace\}$
$\{ \charf_{\hammingsphere_w} \dhpk \setsep\dhpk\in \dhpkspace\}$
is spanned~by
$$
\bigcup_{d=0}^t\{\charf_{\hammingsphere_w}
\setsep\dhp \in \dharmspaced{d}\}.
$$

By the second part of Proposition~\ref{prop:dharmdecomp},
any $\dhpk\in \dhpkspace$ may be written in the form
$$
\dhpk=\sum_{j=0}^t(\Yoperpn)^j \dhp_j,
$$
with $\dhp_j\in \bigoplus_{d=0}^{t-j}\dharmspaced{d}$.
By Lemma~\ref{lem:multchar} and the hypothesis, it then only remains
to demonstrate that $\proj{n-2w}{(\Yoperpn)^j \dhp_j}$ and $\proj{n-2w}{\dhp_j}$
are related by a constant factor: for each $j=0,\ldots, t$, we have
\begin{equation}
	 \proj{n-2w}{(\Yoperpn)^j \dhp_j}=\konstant\cdot \proj{n-2w}{\dhp_j}
	\label{eq:thekonstant}
\end{equation}
for some constant $\konstant$ depending on both $j$ and $t$.

Now, given any $\dhp\in \dharmspaced{d}$, we see by the third part
of Proposition~\ref{prop:dharmdecomp} that the polynomials~$(\Yoperpn)^k\dhp$
($0\leq k\leq n-2d$) span an irreducible representation of $\Slalg_2$
which is isomorphic to~$\module_{n-2d}$.  We may regard this representation
as $(n-2d)$-th homogeneous part of the polynomial algebra
$\C[\upp{0},\upp{1}]$ with generators $\upp{0}, \upp{1}$
and with actions of $\Xoperp,\Hoperp,\Yoperp$ respectively given~by
\begin{equation}
\label{eq:XHY'_dif}
\up{0}\frac{\partial}{\partial \up{1}},\quad
\left(\up{0}\frac{\partial}{\partial \up{0}}
 -\up{1}\frac{\partial}{\partial \up{1}}\right),
\quad \up{1}\frac{\partial}{\partial \up{0}},
\end{equation}
where $\up{0}=\upp{0}+\upp{1}$ and $\up{1}=\upp{0}-\upp{1}$.
With this identification, we may take $\dhp=(\up{0})^{n-2d}$, as
$$
\dhp\in\ker\bigl(\Xoperpn:\dharmspaced{d}\to\dharmspaced{d-1}\bigr).
$$
We now show that $\proj{n-2w}{(\Yoperpn)^k \dhp}$ and $\proj{n-2w}{\dhp}$
are related by a constant factor for each $k$ ($0\leq k\leq n-2d$);
the desired expression~\eqref{eq:thekonstant} follows.
We observe that $\Hoper$ acts as
\begin{equation}
\label{eq:H_dif}
\upp{0}\frac{\partial}{\partial \upp{0}}
-\upp{1}\frac{\partial}{\partial \upp{1}}.
\end{equation}
Therefore, $\proj{n-2w}{\dhp}=\proj{n-2w}{(\upp{0}+\upp{1})^{n-2d}}$
equals $\binom{n-2d}{w-d} \upp{0}^{n-(d+w)}\upp{1}^{w-d}$.
To see this, note that
$\proj{n-2w}{(\upp{0}+\upp{1})^{n-2d}}
= \binom{n-2d}{b_1} \upp{0}^{\konstant_0}\upp{1}^{\konstant_1}
$
with $\konstant_0+\konstant_1=n-2d$ and $\konstant_0-\konstant_1=n-2w$.
 (The latter statement follows from the definition of $\proj{n-2w}{\cdot}$.)
 Likewise,
$$
\proj{n-2w}{(\Yoperpn)^k \dhp}
= \proj{n-2w}{(\Yoperpn)^k (\upp{0}+\upp{1})^{n-2d}}
$$
is the $\upp{0}^{n-(d+w)}\upp{1}^{w-d}$ component of
$(\Yoperpn)^k(\upp{0}+\upp{1})^{n-2d}$.  Since this
component is equal to
$$
\upp{0}^{n-(d+w)}\upp{1}^{w-d}=\proj{n-2w}{\dhp}
$$
up to a constant factor, we are done.
\end{proof}

\subsubsection*{Remarks}

The constant relating $\proj{n-2w}{(\Yoperpn)^k \dhp}$ and $\proj{n-2w}{\dhp}$
in the proof of Proposition~\ref{prop:equivprop} was obtained directly
from the identification of $\bigl\{(\Yoperpn)^k\dhp\bigr\}_{k=0}^{n-2d}$
with $\module_{n-2d}$.  Consequently, this constant is independent
of the choice of $\dhp\in\dharmspaced{d}$.

Proposition~\ref{prop:equivprop} leads to another equivalent characterization
of $t$-designs which makes the analogy between $t$-designs and spherical
$t$-designs explicit.  We have the following corollary, which is
equivalent to Theorem~6 of Delsarte~\cite{Delsarte:Hahn}.
\begin{cor}\label{cor:equivprop2}
A set $\somedesign\subseteq \hammingsphere_w$ is a $t$-design if
and only if
\begin{equation}
\label{eq:alttdes}
\sum_{v\in \somedesign}\dhp(v)
= \frac{\vert\somedesign\vert}{\vert\hammingsphere_w\vert}
  \sum_{v\in \hammingsphere_w}\dhp(v)
\end{equation}
for all $\dhp\in \inlinecup_{d=0}^t\dpolyspaced{d}$.
\end{cor}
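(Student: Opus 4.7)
The plan is to derive Corollary~\ref{cor:equivprop2} from Proposition~\ref{prop:equivprop} by combining the $\Slalg_2$-decomposition of $\dpolyspaced{d}$ with the observation that $\hammingsphere_w$ itself is a trivial $t$-design. Two easy preliminary facts drive the argument: (a)~for $t \leq w$ the full sphere $\hammingsphere_w$ is a $t$-$(n,w,\binom{n-t}{w-t})$ design by symmetry, so applying Proposition~\ref{prop:equivprop} to $\somedesign = \hammingsphere_w$ gives $\sum_{v \in \hammingsphere_w} \dhp(v) = 0$ for every $\dhp \in \dharmspaced{d}$ with $1 \leq d \leq t$; and (b)~equation~\eqref{eq:alttdes} is linear in $\dhp$ and tautologically true for constant $\dhp$.

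The ``if'' direction is then immediate. Assuming~\eqref{eq:alttdes} for every $\dhp \in \bigcup_{d=0}^t \dpolyspaced{d}$, I would apply it in particular to harmonic $\dhp$ of positive degree at most~$t$: the right-hand side vanishes by~(a), hence so does the left-hand side, so the hypothesis of Proposition~\ref{prop:equivprop} holds and $\somedesign$ is a $t$-design.

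For the ``only if'' direction I would fix a $t$-design $\somedesign$ and an arbitrary $\dhp \in \dpolyspaced{d}$ with $d \leq t$. Proposition~\ref{prop:dharmdecomp}(2) decomposes $\dhp = \sum_{j=0}^d (\Yoperpn)^j \dhp_j$ with $\dhp_j \in \dharmspaced{d-j}$, and by linearity it suffices to check~\eqref{eq:alttdes} for a single summand $(\Yoperpn)^j \dhp_j$. The computation~\eqref{eq:thekonstant} established inside the proof of Proposition~\ref{prop:equivprop} supplies a constant $\konstant$ (depending only on $j,d-j,w,n$) with
\[
\proj{n-2w}{(\Yoperpn)^j \dhp_j} = \konstant \cdot \proj{n-2w}{\dhp_j};
\]
combined with Lemma~\ref{lem:multchar}, this forces $(\Yoperpn)^j \dhp_j$ and $\konstant \cdot \dhp_j$ to agree \emph{pointwise} on $\hammingsphere_w$. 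Since $\somedesign \subseteq \hammingsphere_w$, both sums in~\eqref{eq:alttdes} for $(\Yoperpn)^j \dhp_j$ reduce to $\konstant$ times the analogous sums for $\dhp_j$, and it remains only to verify~\eqref{eq:alttdes} for the harmonic polynomial $\dhp_j$.

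That final verification splits into two trivial cases: if $d - j = 0$ then $\dhp_j$ is a constant and~\eqref{eq:alttdes} reduces to preliminary~(b); if $d - j \geq 1$ then Proposition~\ref{prop:equivprop} applied to $\somedesign$ and preliminary~(a) applied to $\hammingsphere_w$ together make both sides of~\eqref{eq:alttdes} vanish. The main obstacle in the whole argument is purely one of bookkeeping: one must use that the proportionality constant $\konstant$ relating $(\Yoperpn)^j \dhp_j$ to $\dhp_j$ is genuinely the same on both sides of~\eqref{eq:alttdes}, which is automatic because it arises from a pointwise identity on $\hammingsphere_w$ rather than from any averaging.
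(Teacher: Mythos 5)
Your proof is correct and follows essentially the same route as the paper's (which compresses it to one line): reduce to the harmonic case via Proposition~\ref{prop:dharmdecomp}(2) and the pointwise proportionality on $\hammingsphere_w$ established in the proof of Proposition~\ref{prop:equivprop}, handle constants trivially, and invoke Proposition~\ref{prop:equivprop} for the nonconstant harmonic pieces. Your explicit observation that $\hammingsphere_w$ is itself a trivial $t$-design, which kills the right-hand side for nonconstant harmonic $\dhp$, is a clean way to make precise what the paper leaves implicit.
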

\begin{proof}As~\eqref{eq:alttdes} is immediate when $\dhp$ is constant,
the result follows directly from Proposition~\ref{prop:equivprop}
and the second part of Proposition~\ref{prop:dharmdecomp}.
\end{proof}

Finally, we note that the proof of Proposition~\ref{prop:equivprop}
shows that each $\dhp\in\dharmspaced{d}$ is supported on $\inlinecup_{w=d}^{n-d}\hammingsphere_w$.
 This fact leads to a second proof of Corollary~\ref{cor:ispoly!}.
\begin{proof}[Alternate Proof of Corollary~\ref{cor:ispoly!}]
As $\dhp\in \dharmspaced{d}$ is supported on $\inlinecup_{w=d}^{n-d}\hammingsphere_w$,
we know that
$$
W_{C,\dhp}(x,y)
= \sum_{w=0}^{n}\left(\sum_{c\in\codeshell{w}{C}}\dhp(c)\right)x^{n-w}y^{w}
= \sum_{w=d}^{n-d}\left(\sum_{c\in\codeshell{w}{C}}\dhp(c)\right)x^{n-w}y^{w}.
$$
The result then follows immediately.
\end{proof}

\subsection{The Extremal Type~II Code Case of the Assmus\thmnamesep
Mattson Theorem}\label{sec:assmatforreal}

To illustrate the power of Proposition~\ref{prop:equivprop},
we now prove the Assmus\thmnamesep Mattson theorem~\cite{assmusmattson}
in the important special case of an \emph{extremal Type~II code},
that is, a binary linear code~$C$\/ whose minimal (nonzero) weight
$$
\minwt(C) \defeq \min \{ \wt(c) \setsep c \in C, \; c \neq 0 \}
$$
attains the upper bound $4 \lfloor n/24 \rfloor + 4$
derived by Mallows and Sloane~\cite{MallowsSloane} from Gleason's
theorem for Type~II codes.

For $n \equiv 0 \bmod 8$, we define $\cutoff{n}$ by
\begin{equation}
\label{eq:defcut}
\cutoff{n}\defeq \begin{cases}
5&n\equiv 0\bmod 24,\\
3&n\equiv 8 \bmod 24,\\
1&n\equiv 16 \bmod 24.\\
\end{cases}
\end{equation}

\begin{theorem}\label{thm:extremeassmat}
If $C$ is an extremal Type~II code of length $n$,
then $\codeshell{w}{C}$ is a $t$-design for each $t \leq \cutoff{n}$
and any~$w$.
\end{theorem}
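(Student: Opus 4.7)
The plan is to reduce the assertion, via Proposition~\ref{prop:equivprop}, to showing that $W_{C,Q}\equiv 0$ for every $Q \in \dharmspaced{d}$ with $1 \leq d \leq \cutoff{n}$. The coefficient of $x^{n-w}y^w$ in $W_{C,Q}$ is precisely $\sum_{c \in \codeshell{w}{C}} Q(c)$, so this vanishing delivers the design property at every weight $w$ simultaneously.

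Next I would apply Bachoc's generalization of Gleason's theorem (Theorem~\ref{thm:gleasongen}) to write $W_{C,Q} = \gleasonpsi{d} \cdot f(\gleasonphi,\gleasonxi)$, where $f$ is homogeneous of weighted degree $n - \deg(\gleasonpsi{d})$ in generators $\gleasonphi,\gleasonxi$ of weights $8$ and $24$. A congruence check disposes of most cases without any input from extremality: the degrees $\deg(\gleasonpsi{d}) \in \{0, 30, 12, 18\}$ for $d \bmod 4 \in \{0,1,2,3\}$ are congruent respectively to $\{0, 6, 4, 2\} \bmod 8$, while $n \equiv 0 \bmod 8$. Hence $8a+24b = n-\deg(\gleasonpsi{d})$ has nonnegative integer solutions only when $4 \mid d$. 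Combined with $1 \leq d \leq \cutoff{n} \leq 5$, this reduces the problem to the single case $d = 4$, which in turn forces $\cutoff{n} = 5$ and $n \equiv 0 \bmod 24$.

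For the remaining case I set $n = 24m$. The weighted-degree-$n$ piece of $\C[\gleasonphi,\gleasonxi]$ has basis $\{\gleasonphi^{3(m-b)}\gleasonxi^b : 0 \leq b \leq m\}$, of dimension $m+1$. Since $C$ is extremal, its only codeword of weight below $4m+4$ is the zero codeword; and by the remark at the end of Section~\ref{sec:assmat}, any $Q \in \dharmspaced{4}$ vanishes on $\hammingsphere_0$. Hence the coefficient of $x^{n-w}y^w$ in $W_{C,Q}$ vanishes for each $w \in \{0, 4, 8, \ldots, 4m\}$, giving $m+1$ linear conditions on the $m+1$ unknown coefficients of~$f$.

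The hard part will be verifying that these $m+1$ conditions are linearly independent, which I expect to do by checking that they are triangular in the chosen basis. Using $\gleasonxi = x^4 y^4(x^4-y^4)^4$ and $\gleasonphi = x^8 + 14 x^4 y^4 + y^8$, the lowest-$y$ monomial of $\gleasonphi^{3(m-b)}\gleasonxi^b$ is $x^{n-4b}y^{4b}$, appearing with coefficient~$1$; so the $(m+1)\times(m+1)$ matrix recording the coefficient of $y^{4k}$ in $\gleasonphi^{3(m-b)}\gleasonxi^b$ (with $k$ as row index and $b$ as column index) is lower triangular with unit diagonal, hence invertible. The $m+1$ extremality conditions therefore force $f \equiv 0$, so $W_{C,Q} \equiv 0$, completing the proof.
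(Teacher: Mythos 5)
Your reduction via Proposition~\ref{prop:equivprop} and your final triangular-system argument are fine, but the congruence step in the middle is a genuine gap, and it is where almost all of the content of the theorem is supposed to live. You have read Theorem~\ref{thm:gleasongen} as asserting $W_{C,\dhp}=\gleasonpsi{d}\cdot f(\gleasonphi,\gleasonxi)$ with $f$ homogeneous of degree $n-\deg(\gleasonpsi{d})$. The correct form of Bachoc's theorem (the one the paper actually uses, via Corollary~\ref{cor:ispoly!}) is that $W_{C,\dhp}/(xy)^d$ lies in $\C[\gleasonphi,\gleasonxi]\,\gleasonpsi{d}$, so that $f$ has degree $n-2d-\deg(\gleasonpsi{d})$. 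Since $2d+\deg(\gleasonpsi{d})\equiv 0\bmod 8$ for every residue of $d$ mod~$4$ (namely $0+0$, $2+30$, $4+12$, $6+18$), the congruence obstruction you rely on never occurs, and no case is eliminated ``for free.'' That this must be so is visible from the conclusion your step would yield: it would prove $W_{C,\dhp}\equiv 0$ for \emph{every} Type~II code and every harmonic $\dhp$ of degree $d\not\equiv 0\bmod 4$, hence (by Proposition~\ref{prop:equivprop}) that every shell of every Type~II code is a $3$-design. This is false: for $C=e_8\oplus e_8$ of length $16$, the $28$ weight-$4$ words would have to cover each of the $\binom{16}{2}=120$ pairs $28\cdot 6/120=1.4$ times, so $\codeshell{4}{C}$ is not a $2$-design and some $\dhp\in\dharmspaced{2}$ has $W_{C,\dhp}\neq 0$. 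Extremality is therefore needed in every degree $1\leq d\leq \cutoff{n}$, not only at $d=4$, and your argument as written proves only the $d=4$ case.

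The paper's proof (Proposition~\ref{prop:extremeassmat}) supplies the missing input uniformly in $d$: extremality forces the $y$-valuation of $W_{C,\dhp}$ to be at least $\minwt(C)=4\lfloor n/24\rfloor+4$, which in turn forces $f$ to be divisible by $\gleasonxi^{(\minwt(C)-d-\konstant_d)/4}$ (where $\konstant_d$ is the $y$-valuation of $\gleasonpsi{d}$); a degree count then shows the remaining cofactor has negative degree, hence vanishes. Your $d=4$ analysis is really a disguised version of this valuation argument --- the triangularity of the matrix of lowest-$y$ coefficients of $\gleasonphi^{3(m-b)}\gleasonxi^{b}$ is exactly the statement that the $y$-valuation conditions successively kill the coefficients of $\gleasonxi^{0},\gleasonxi^{1},\ldots$ --- and it does adapt to all the other degrees once you work with $W_{C,\dhp}/(xy)^d$ in the correct module $\C[\gleasonphi,\gleasonxi]\,\gleasonpsi{d}$ and count basis elements against vanishing coefficients there. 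To repair the proof, replace the congruence step by that adaptation (or by the valuation/degree count directly) for each $d\in\{1,\ldots,\cutoff{n}\}$.
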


By Proposition~\ref{prop:equivprop}, this theorem follows quickly
from the following result, which is slightly more general and is
a coding-theoretic analog of the $r>0$ part of Theorem~\ref{thm:sph_t_des}.

\begin{prop}\label{prop:extremeassmat}
If $C$ is an extremal Type~II code of length~$n$, then for any $w$ and
any choices of $d\in\{1,\ldots, \cutoff{n}\}\cup\{\cutoff{n}+2\}$
and $Q\in \dharmspaced{d}$, we have
$$
\sum_{c\in \codeshell{w}{C}}Q(c)=0.
$$
\end{prop}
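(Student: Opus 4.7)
The plan is to prove the stronger statement $W_{C,Q} = 0$ as a polynomial, which implies the proposition because $\sum_{c \in \codeshell{w}{C}} Q(c)$ is exactly the coefficient of $x^{n-w}y^w$ in $W_{C,Q}$. By Bachoc's Gleason-type theorem (Theorem~\ref{thm:gleasongen}) we may write $W_{C,Q} = R \cdot \gleasonpsi{d}$ with $R \in \C[\gleasonphi, \gleasonxi]$ homogeneous of degree $n - \deg \gleasonpsi{d}$. I will split the argument into two essentially disjoint cases according to the residue of $d$ modulo~$4$.

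For $d \not\equiv 0 \pmod 4$, a pure degree count suffices. From~\eqref{eq:psi} one reads off $\deg \gleasonpsi{d} \in \{30, 12, 18\}$ for $d \equiv 1, 2, 3 \pmod 4$ respectively, and none of these is a multiple of~$8$. Since $n \equiv 0 \pmod 8$, the degree $\deg R = n - \deg \gleasonpsi{d}$ is likewise not a multiple of~$8$. But the generators $\gleasonphi, \gleasonxi$ of $\C[\gleasonphi, \gleasonxi]$ have degrees $8$ and $24$, so every nonzero homogeneous element of this polynomial ring has degree a nonnegative multiple of~$8$; hence $R = 0$ and $W_{C,Q} = 0$.

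For $d \equiv 0 \pmod 4$, inspection of the range $\{1, \ldots, \cutoff{n}\} \cup \{\cutoff{n}+2\}$ with $\cutoff{n} \in \{1, 3, 5\}$ shows that the only possibility is $d = 4$ with $\cutoff{n} = 5$, i.e., $n \equiv 0 \pmod{24}$. Writing $n = 24M$, we have $\gleasonpsi{4} = 1$ and $R = W_{C,Q}$ is homogeneous of degree $24M$. The extremality of $C$ together with the fact that $Q(0) = 0$ (which holds because any $Q \in \dharmspaced{d}$ with $d > 0$ is supported on $\bigcup_{w=d}^{n-d} \hammingsphere_w$, as noted at the end of Section~\ref{sec:assmat}) shows that $\sum_{\wt(c)=w} Q(c) = 0$ for every $w$ with $0 \leq w < 4M + 4$. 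Hence $R = W_{C,Q}$ has $y$-valuation at least $4M + 4 = 4(M+1)$ as a polynomial in $x, y$.

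The key ingredient is then a divisibility lemma: any $R \in \C[\gleasonphi, \gleasonxi]$ of $y$-valuation at least $4k$ (as a polynomial in $x, y$) satisfies $\gleasonxi^k \mid R$ inside $\C[\gleasonphi, \gleasonxi]$. I would prove this by induction on~$k$, the inductive step substituting $y = 0$: since $\gleasonphi|_{y=0} = x^8$ and $\gleasonxi|_{y=0} = 0$, expanding $R = \sum_i S_i(\gleasonphi)\,\gleasonxi^i$ gives $R|_{y=0} = S_0(x^8)$, which must vanish, forcing $S_0 = 0$ and hence $\gleasonxi \mid R$. Applying the lemma with $k = M + 1$ yields $\gleasonxi^{M+1} \mid R$, but $\deg \gleasonxi^{M+1} = 24(M+1) > 24M = \deg R$, forcing $R = 0$. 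The main obstacle is cleanly isolating this divisibility lemma; once it is in hand the rest is a short combination of Bachoc-Gleason, extremality, and a single degree comparison, directly parallel in spirit to Lemma~\ref{lemma:thetaLP=0}, with divisibility by $\gleasonxi$ here playing the role of divisibility by $\Delta$ there.
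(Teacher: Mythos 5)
Your treatment of the case $d\equiv 0\bmod 4$ is sound and is essentially the paper's own argument: extremality forces a large $y$-valuation, the substitution $y=0$ forces divisibility by a power of $\gleasonxi$, and a degree count kills the polynomial. The case $d\not\equiv 0\bmod 4$, however, has a genuine gap. Your parity-of-degree argument there uses no property of $C$ beyond its being Type~II, so it would prove that $W_{C,Q}=0$ for \emph{every} Type~II code $C$ and every $Q\in\dharmspaced{d}$ with $d\not\equiv 0\bmod 4$; by Proposition~\ref{prop:equivprop} this would make every shell of every Type~II code a \hbox{$3$-design}. That conclusion is false: for $C=e_8\oplus e_8$ of length $16$, the shell $\codeshell{4}{C}$ (the $28$ tetrads) is a \hbox{$1$-design} but not a \hbox{$2$-design} --- a pair of coordinates inside one $e_8$ block lies in $3$ tetrads while a pair straddling the two blocks lies in none --- so some $Q\in\dharmspaced{2}$ has $\sum_{c\in\codeshell{4}{C}}Q(c)\neq 0$, i.e.\ $W_{C,Q}\neq 0$ with $d=2\equiv 2\bmod 4$. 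An argument that proves too much must be exploiting a faulty hypothesis, and here the culprit is a too-literal reading of Theorem~\ref{thm:gleasongen}.

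The intended statement (Bachoc's actual theorem, and the form the paper itself invokes when it writes that $W_{C,\dhp}(x,y)/(xy)^d$ is of the form $\gleasonxi^{(\minwt(C)-d-\konstant_d)/4}\cdot f$) asserts membership of $W_{C,Q}(x,y)/(xy)^d$, not of $W_{C,Q}$ itself, in $\C[\gleasonphi,\gleasonxi]\gleasonpsi{d}$. With that reading one has $\deg R = n-2d-\deg\gleasonpsi{d}$, and since $2d+\deg\gleasonpsi{d}$ equals $2d$, $2d+30$, $2d+12$, or $2d+18$ for $d\equiv 0,1,2,3\bmod 4$, each of which is $\equiv 0\bmod 8$, the degree of $R$ is always a multiple of~$8$; the parity obstruction you rely on never occurs, and no shortcut avoiding extremality is available. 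The repair is to run your $d\equiv 0\bmod 4$ argument uniformly over all four residue classes: extremality gives $y$-valuation of $W_{C,Q}$ at least $\minwt(C)=4\lfloor n/24\rfloor+4$, your $y=0$ divisibility lemma then forces $R$ to be divisible by a power of $\gleasonxi$ whose degree, added to $2d+\deg\gleasonpsi{d}$, exceeds $n$ exactly when $d\in\{1,\ldots,\cutoff{n}\}\cup\{\cutoff{n}+2\}$. That is precisely the computation the paper performs.
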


Proposition~\ref{prop:extremeassmat} was originally proven by Calderbank
and Delsarte~\cite{CalderbankDelsarte}.  Here, we demonstrate how
Proposition~\ref{prop:extremeassmat} follows quickly from Theorem~\ref{thm:gleasongen}.
This approach is due to Bachoc~\cite{Bachoc:binary}. Our exposition
of this argument slightly expands that of Bachoc~\cite{Bachoc:binary},
which demonstrated only four cases of the result.

\begin{proof}[Proof of Proposition~\ref{prop:extremeassmat}]
We let $d\in \{1,\ldots, \cutoff{n}\}\cup\{\cutoff{n}+2\}$ and $\dhp\in\dharmspaced{d}$.
 Then, we consider the harmonic weight enumerator $W_{C,\dhp}(x,y)$.
 By Theorems~\ref{thm:hwsfneq} and~\ref{thm:gleasongen}, we see that
$\inlinefrac{W_{C,\dhp}(x,y)}{(xy)^d}$ is of the form
$\gleasonxi^{(\minwt(C)-d-\konstant_d)/4} \cdot f$,
where $\konstant_d$ equals the valuation at $y$ of $\gleasonpsi{d}$.
This factor arises because the valuation at $y$ of $W_{C,\dhp}(x,y)$
is at least $\minwt(C)$.

We see that if $W_{C,\dhp}(x,y)$ is nonzero, then it has degree equal to
\begin{equation}
\label{eq:thelast}(n\bmod 24)+4d-24
\end{equation}
if $d\equiv 0\bmod 2$.  Similarly, $f$ has degree
\begin{equation}
\label{eq:thelast2}(n\bmod 24)+4d-36
\end{equation}
if $d\equiv 1\bmod 2$.   Since~\eqref{eq:thelast} and~\eqref{eq:thelast2}
are always negative for $d\in \{1,\ldots, \cutoff{n}\}\cup\{\cutoff{n}+2\}$,
we must have $f\equiv 0$, whence
\begin{align*}
\sum_{w=0}^n\left(\sum_{c\in \codeshell{w}{C}}Q(c)\right)x^{n-w}y^w
= W_{C,\dhp}(x,y)\equiv 0.&\qedhere
\end{align*}
\end{proof}

We note the following special case of Proposition~\ref{prop:equivprop}
which is relevant to our proofs of configuration results in
Section~\ref{sec:ozeki_code_section}.

\begin{cor}\label{cor:extremeassmat}
If $C$ is an extremal Type~II code of length $n$ and $w>0$, then we have
$$
\sum_{c\in \codeshell{w}{C}}\dzoharmd{t}{\cfixed}(c)=0
$$
for any $t\in\{1,\ldots, \cutoff{n}\}\cup\{\cutoff{n}+2\}$.
\end{cor}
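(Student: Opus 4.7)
The plan is to derive this corollary as an immediate specialization of Proposition~\ref{prop:extremeassmat}. The key observation is that the zonal harmonic polynomial $\dzoharmd{t}{\cfixed}$ constructed in Proposition~\ref{prop:allzonals} lies by that proposition in $\dzharmspaced{t}=\dzpolyspaced{t}\cap\dharmspaced{t}$, and in particular belongs to $\dharmspaced{t}$.

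First I would cite Proposition~\ref{prop:allzonals} to place $\dzoharmd{t}{\cfixed}$ in $\dharmspaced{t}$. Since $t\in\{1,\ldots,\cutoff{n}\}\cup\{\cutoff{n}+2\}$, I would then apply Proposition~\ref{prop:extremeassmat} with $d=t$ and $Q=\dzoharmd{t}{\cfixed}$ to obtain
$$
\sum_{c\in\codeshell{w}{C}}\dzoharmd{t}{\cfixed}(c)=0
$$
for every weight~$w$, and in particular for the hypothesized $w>0$.

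I do not anticipate any substantive obstacle, as the entire analytic work has already been carried out in Proposition~\ref{prop:extremeassmat} (via Theorems~\ref{thm:hwsfneq} and~\ref{thm:gleasongen}); all that remains is to plug in the specific discrete harmonic polynomial $\dzoharmd{t}{\cfixed}$. I note in passing that the hypothesis $w>0$ is slightly stronger than strictly necessary: any $\dhp\in\dharmspaced{d}$ with $d>0$ satisfies $\dhp(0)=0$, as one checks by collecting the monomial coefficients of $\dhp$ in the basis $\{\prod_{j\in S}(-1)^{v_j}\}$ according to the linear constraints imposed by $\Xoperpn\dhp=0$, so the $w=0$ instance reduces to a trivial identity. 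The phrasing $w>0$ is presumably retained because the intended application in Section~\ref{sec:ozeki_code_section} concerns nonempty shells of positive weight.
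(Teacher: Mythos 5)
Your proposal is correct and is exactly the paper's (implicit) argument: the corollary is stated without proof immediately after Proposition~\ref{prop:extremeassmat}, and the intended derivation is precisely the specialization $Q=\dzoharmd{t}{\cfixed}\in\dzharmspaced{t}\subseteq\dharmspaced{t}$ that you give. Citing Proposition~\ref{prop:extremeassmat} directly (rather than routing through the $t$-design characterization of Proposition~\ref{prop:equivprop}, as the paper's surrounding prose loosely suggests) is the right choice, since the $t=\cutoff{n}+2$ case is not covered by Theorem~\ref{thm:extremeassmat}.
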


\subsubsection*{Remarks}
As Bachoc~\cite{Bachoc:binary} illustrates,
it is possible to prove the full Assmus\thmnamesep Mattson theorem
with a harmonic weight enumerator argument similar to that used in
the proof of  Proposition~\ref{prop:extremeassmat}.  We have focused
on the case of an extremal Type~II code because the full force
of Corollary~\ref{cor:extremeassmat} is required in
Section~\ref{sec:ozeki_code_section}.

    \section{The Koch Condition on Type~II Codes of Length $24$}
    %this is the Niemeier_codes_section

\label{sec:Niemeier_codes}

\subsection{Tetrad Systems}
\label{subsec:tetrads}

For any code~$C$\/ and integer~$w$,
define $\codeshell{w}{C}$ to be the subset of~$C$
consisting of codewords of weight $w$,
and define $\gencodeshell{w}{C}$ to be the linear subcode of~$C$
generated by $\codeshell{w}{C}$.  (This notation is analogous to
that of Ozeki~\cite{Ozeki:48} for lattices.)

For a doubly even code $C\subset \F_2^n$,
the set $C_4$ is called the \emph{tetrad system} of~$C$.
In analogy with the theory of root systems for lattices,
the code $\gencodeshell{4}{C}$ generated by $C_4$ is called the
\emph{tetrad subcode} of $C$, and if $\gencodeshell{4}{C}=C$
then $C$ is called a \textit{tetrad code}.
The irreducible tetrad codes are exactly
\begin{itemize}
  \item the codes $d_{2k}$ ($k \geq 2$), consisting of
   all words $c \in \F_2^{2k}$ of doubly even weight such that
   $c_{2j-1}=c_{2j}$ for each $j=1,2,\ldots,k$\/;
  \item the $[7,3,4]$ dual Hamming code, called $e_7$ in this context; and
  \item the $[8,4,4]$ extended Hamming code, here called $e_8$
\end{itemize}
(see~\cite{Koch87}).  We use the names $d_{2k}$, $e_7$, $e_8$
because the Construction~A lattices $L_{d_{2k}}$, $L_{e_7}$, and $L_{e_8}$
are isomorphic with the root lattices $D_{2k}$, $E_7$, and $E_8$ respectively.

Analogous to the Coxeter number of an irreducible root system,
we define the \emph{tetrad number} $\eta(C)$ of an irreducible
tetrad code~$C$\/ of length~$m$ to be $|C_4|/m$.
A quick computation shows that each of the $m$ coordinates of~$C$\/
takes the value~$1$ on exactly $4\eta(C)$ words in $C_4$, and that
$\eta(d_{2k}) = (k-1)/4$ for each~$k$,
while $\eta(e_7) = 1$ and $\eta(e_8) = 7/4$.

\subsection{Koch's Tetrad System Condition}
Through appeal to the condition of Venkov~\cite{Venkov:24} restricting
the possible root systems of Type~II lattices of rank~$24$,
Koch~\cite{Koch87} obtained a condition on the tetrad systems of Type~II
codes of length $24$.  Specifically, he showed the following result.

\begin{prop}\label{prop:Niemeiertetradsystems}
If $C$ is a Type~II code of length $24$, then $C$ has
one of the following nine tetrad systems:
\begin{equation}
\label{eq:koch_list}
\emptyset,\quad
6d_4,\quad 4d_6,\quad 3d_8,\quad 2d_{12},\quad
d_{24},\quad 2e_7+d_{10},\quad 3e_8,\quad e_8+d_{16}.
\end{equation}
\end{prop}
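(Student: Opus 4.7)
The plan is to mimic Venkov's treatment of Niemeier lattices, substituting harmonic weight enumerators for weighted theta functions. The key input is Bachoc's generalized Gleason theorem (Theorem~\ref{thm:gleasongen}): it will force $\codeshell{4}{C}$ to be a combinatorial $1$-design, and this in turn will force all tetrad numbers of the irreducible components of $\gencodeshell{4}{C}$ to a single common value, reducing the claim to a short finite enumeration.

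The first step is to show that $\codeshell{4}{C}$ is a $1$-design. Applied with $d=1$ to any $Q \in \dharmspaced{1}$, Theorem~\ref{thm:gleasongen} places $W_{C,Q}(x,y)$ in $\C[\gleasonphi,\gleasonxi]\cdot\gleasonpsi{1}$. But $\gleasonpsi{1}$ has degree~$30$, while $W_{C,Q}$ is homogeneous of degree~$24$, so $W_{C,Q}$ must vanish identically. In particular $\sum_{c\in\codeshell{4}{C}} Q(c) = 0$ for every degree-$1$ discrete harmonic polynomial, which is exactly the $1$-design criterion of Proposition~\ref{prop:equivprop}.

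The second step extracts the combinatorial consequence. Decompose the tetrad subcode as $\gencodeshell{4}{C} = \bigoplus_i C^{(i)}$, with each $C^{(i)}$ irreducible of length $m_i$ and tetrad number $\eta_i$; by the classification in Section~\ref{subsec:tetrads}, each $C^{(i)}$ is one of $d_{2k}$, $e_7$, or $e_8$, and the supports of distinct components are disjoint. The $1$-design parameter $\lambda_j := |\{c\in\codeshell{4}{C} : c_j=1\}|$ equals $4\eta_i$ for $j$ in the support of $C^{(i)}$ and equals $0$ for $j$ outside the combined support of the $C^{(i)}$. Constancy of $\lambda_j$ therefore forces either $\codeshell{4}{C}=\emptyset$, or else the supports of the $C^{(i)}$ together exhaust $\{1,\ldots,24\}$ (so $\sum_i m_i = 24$) and all the $\eta_i$ equal one common value $\eta$.

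The third step is a short bookkeeping exercise. Using $\eta(d_{2k})=(k-1)/4$, $\eta(e_7)=1$, and $\eta(e_8)=7/4$, one enumerates multisets of irreducible tetrad codes with total length $24$ and a single value of $\eta$ shared by all components. A short case analysis by the value of $\eta$ produces exactly the eight nonempty systems $6d_4$ ($\eta=1/4$), $4d_6$ ($\eta=1/2$), $3d_8$ ($\eta=3/4$), $2e_7+d_{10}$ ($\eta=1$), $2d_{12}$ ($\eta=5/4$), $3e_8$ and $e_8+d_{16}$ ($\eta=7/4$), and $d_{24}$ ($\eta=11/4$); every other admissible $\eta$ fails the length constraint, either because the corresponding $d_{2k}$ alone has length not dividing $24$ (as for $\eta\in\{3/2,\,2,\,9/4,\,5/2\}$) or because every relevant irreducible component already exceeds length $24$. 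Together with the empty case, this is precisely the list in~\eqref{eq:koch_list}. There is no serious mathematical obstacle; the delicate point is only the enumeration, which is finite and routine.
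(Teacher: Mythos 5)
Your proposal is correct and follows essentially the same route as the paper: the paper likewise deduces the vanishing of $W_{C,Q}$ for degree-$1$ discrete harmonic $Q$ (via the degree count against $\gleasonpsi{1}$), extracts the constancy of $|\{c\in\tetrads{C}\setsep c_j=1\}|$ and hence of the tetrad numbers (its Lemma~\ref{lem:irredcomps}), and finishes with the same finite enumeration. The only cosmetic difference is that you route the first step through the $1$-design criterion of Proposition~\ref{prop:equivprop}, whereas the paper computes directly with the specific zonal polynomial $\dhpdegone{j}{24}$ and reads off the $x^{20}y^4$ coefficient.
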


Koch recovered this condition from the Niemeier~\cite{Niemeier:24}
classification of Type~II lattices of rank~$24$ via Construction~A.
The condition is also a consequence
of the classification of Type~II codes of length~$24$ given by Pless
and Sloane~\cite{PlessSloane1975}.

\subsection{A Purely Coding-Theoretic Proof of Koch's Condition}\label{sec:codthykoch}

Here, we present our proof~\cite{Elkies+Kominers:24} of
Proposition~\ref{prop:Niemeiertetradsystems}
using the theory of harmonic weight enumerators.
This argument is closely analogous to that of Venkov~\cite{Venkov:24}
for the corresponding criterion on root systems of Type~II lattices
of rank~$24$.  We thus begin with a coding-theoretic analog of
\cite[Proposition~1]{Venkov:24}.

\begin{lemma}\label{lem:irredcomps}
If $C$ is a Type~II code of length $24$, then \begin{itemize}
\item either $\tetrads{C}=\emptyset$ or for each $j$  ($1\leq j\leq 24$)
 there exists $c\in \tetrads{C}$ such that $c_j = 1$, and
\item each irreducible component of $\tetradsyst{C}$ has tetrad number
 equal to $\inlinefrac{|\tetrads{C}|}{24}$.
\end{itemize}
\end{lemma}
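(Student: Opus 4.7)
The plan is to mirror Venkov's design-theoretic argument for Type~II lattices of rank~$24$ in the coding setting, using the harmonic weight enumerator machinery developed above in place of weighted theta functions. The crucial input is that for any Type~II code $C$ of length $24$ and any $\dhp\in\dharmspaced{1}$, the harmonic weight enumerator $W_{C,\dhp}(x,y)$ vanishes identically. Indeed, by Bachoc's generalization of Gleason's theorem (Theorem~\ref{thm:gleasongen}), $W_{C,\dhp}(x,y)$ lies in the principal module $\C[\gleasonphi,\gleasonxi]\,\gleasonpsi{1}$, every nonzero element of which has degree at least $\deg(\gleasonpsi{1})=6+8+8+8=30$; but $W_{C,\dhp}(x,y)$ is homogeneous of degree $n=24$, so the only possibility is $W_{C,\dhp}\equiv 0$.

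With this vanishing in hand, Proposition~\ref{prop:equivprop} implies that every shell $\codeshell{w}{C}$ is a $1$-design; equivalently, for each $w$ every coordinate $j\in\{1,\ldots,24\}$ is a $1$ in the same number of words of $\codeshell{w}{C}$. Specializing to $w=4$ and double-counting coordinate-tetrad incidences gives $4\,|\tetrads{C}|=24\lambda$, so $\lambda=|\tetrads{C}|/6$. Thus if $\tetrads{C}\neq\emptyset$, then $\lambda>0$, and every coordinate lies in the support of at least one tetrad, which establishes the first bullet.

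For the second bullet, decompose $\tetradsyst{C}=\bigoplus_i C^{(i)}$ into irreducible tetrad codes supported on pairwise disjoint coordinate sets of lengths $m_i$ (by the first bullet, these sets exhaust all $24$ coordinates whenever $\tetrads{C}\neq\emptyset$; the case $\tetrads{C}=\emptyset$ is vacuous). Because each element of $\tetrads{C}$ is supported entirely within one component, counting coordinate-tetrad incidences inside $C^{(i)}$ in two ways and using the $1$-design property yields
\begin{equation*}
4\,|\codeshell{4}{C^{(i)}}|\;=\;m_i\cdot\frac{|\tetrads{C}|}{6},
\end{equation*}
whence $\eta(C^{(i)})=|\codeshell{4}{C^{(i)}}|/m_i=|\tetrads{C}|/24$ for every~$i$, uniformly across the components.

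The main conceptual step is identifying $\deg(\gleasonpsi{1})=30>24$ as the observation that forces degree-$1$ harmonic weight enumerators to vanish for \emph{every} Type~II code of length~$24$ (not merely the extremal ones); once that is in hand, everything else is a clean translation of Venkov's $1$-design argument into coding-theoretic language, with the irreducible components of $\tetradsyst{C}$ playing the role of irreducible components of a root system.
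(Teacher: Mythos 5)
Your proof is correct and follows essentially the same route as the paper's: both hinge on the vanishing of $W_{C,\dhp}$ for every $\dhp\in\dharmspaced{1}$, forced by $\deg(\gleasonpsi{1})=30$ exceeding the degree available in length~$24$, and then extract the count $|\{c\in\tetrads{C}\setsep c_j=1\}|=|\tetrads{C}|/6$ for each coordinate~$j$. The only (cosmetic) difference is that you package this count as the $1$-design property of $\codeshell{4}{C}$ via Proposition~\ref{prop:equivprop} and double-count incidences, whereas the paper reads it off directly as the $x^{20}y^4$ coefficient of $W_{C,\dhpdegone{j}{24}}$; both then finish identically by the observation that each coordinate of an irreducible tetrad component of length $m$ lies in exactly $4\eta$ tetrads.
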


\begin{proof}[Proof of Lemma \ref{lem:irredcomps}]
For each $j$ ($1\leq j\leq n$), we denote by $\dhpdegone{j}{n}$ the
discrete harmonic polynomial defined~by
\begin{equation}
\dhpdegone{j}{n}(v) \defeq
n\cdot (-1)^{v_j}-\sum_{k=1}^n(-1)^{v_k}\in \dharmspaced{1}.
\label{eq:Q1}
\end{equation}
As in the proof of Proposition~\ref{prop:extremeassmat}, we see that
the harmonic weight enumerator
\begin{equation}
\label{eq:Nmhwe}
W_{C,\dhpdegone{j}{24}}(x,y)
= \sum_{w=0}^{24}
 \left(\sum_{c\in \codeshell{w}{C}}\dhpdegone{j}{24}(c)\right)x^{24-w}y^w
\end{equation}
vanishes for each $j$ ($1\leq j\leq 24$).
We then obtain
\begin{equation}
\label{eq:sum=0w}
\sum_{c\in \tetrads{C}}(8-48c_j)=0
\end{equation}
for each $j$ ($1\leq j\leq 24$),
since the left-hand side of \eqref{eq:sum=0w}
is the $x^{20} y^4$ coefficient of the discrete Fourier transform
of~\eqref{eq:Nmhwe}.  Reorganizing \eqref{eq:sum=0w} shows that
\begin{equation}
\label{eq:tetradcond}
\left| \{c \in \tetrads{C} \setsep c_j = 1 \} \right|
 = \inlinefrac{|\tetrads{C}|}{6}.
\end{equation}
The first part of the lemma then follows.  In the case that $\tetrads{C}\neq
\emptyset$, we also obtain from \eqref{eq:tetradcond} that each irreducible
component of $\tetradsyst{C}$ has tetrad number
$\inlinefrac{\frac{1}{4}|\tetrads{C}|}{6} = \inlinefrac{|\tetrads{C}|}{24}$.
\end{proof}

\subsubsection*{Remark} Since the discrete harmonic polynomial
$\dhpdegone{j}{n}$ has degree $1$ and is invariant under the
coordinate permutations that fix $j$, it is proportional to
the zonal harmonic polynomial $\dzoharmd{1}{\cfixed}$ where
$\cfixed$ is the \hbox{$j$-th} unit vector.

\begin{proof}[Proof of Proposition~\ref{prop:Niemeiertetradsystems}]
As noted in Section~\ref{subsec:tetrads}, there is at most one
tetrad system
with tetrad number $\eta$ for each $\eta\not\in\{1,\inlinefrac{7}{4}\}$,
while for each $\eta \in\{1,\inlinefrac{7}{4}\}$ there are exactly
two tetrad systems with tetrad number $\eta$, with
$\coxcode(d_{10})=\coxcode(e_7) = 1$ and
$\coxcode(d_{16})=\coxcode(e_8) = \inlinefrac{7}{4}$.

Now, Lemma~\ref{lem:irredcomps} implies that if $\tetrads{C}\neq \emptyset$,
then
either $\tetrads{C}$ consists of $\mu$ copies of the tetrad system
$d_{2k}$ for some $\mu$
and $k>1$ such that $\mu\cdot 2k = 24$, or it has one of the following
two forms:
\samepage{
\begin{itemize}
\item $\delta_{10} d_{10} + \varepsilon_7 e_7$,
 with $\varepsilon_7 > 0$ and $10 \delta_{10} + 7 \varepsilon_7 = 24$, or
\item $\delta_{16} d_{16} + \varepsilon_8 e_8$,
 with $\varepsilon_8 > 0$ and  $16 \delta_{16} + 8 \varepsilon_8 = 24$.
\end{itemize}
}
The resulting tetrad systems are precisely
the eight nonempty systems listed in (\ref{eq:koch_list}).
\end{proof}

   \section{Configurations of Extremal Type~II Codes}
      %this is the ozeki_code_section
\label{sec:ozeki_code_section}

Let $C$ be an extremal Type~II code of length $n=8,24,32,
48, 56, 72$, or~$96$.  Set $\minCoz{} = \minwt(C)$, so that
$\minCoz{} = 4, 8, 8, 12, 12, 16$, or $20$ respectively.
We prove that $C$ is generated by $\codeshell{\minCoz{}}{C}$.
Our approach uses the harmonic weight enumerator machinery
developed in Section~\ref{sec:hwe},
following the approach used for lattices in
\cite{Venkov:32}, \cite{Ozeki:32}, \cite{Ozeki:48}, and
\cite{Kominers:56+72+96}.

First, we present a few brief preliminaries.
For any $\cfixed\in \F_2^n$ and any $j$ ($0\leq j\leq n$),
we denote by $\inprodcountc{j}{\cfixed}$ the value
\begin{equation}
\inprodcountc{j}{\cfixed} \defeq
\left|\left\{c\in \gencodeshell{\minCoz{}}{C}
\setsep
\wt(\isectcode{c}{\cfixed})=j\right\}\right|.
\label{eq:N_j}
\end{equation}
For $c\in C^\dualcode$, we must have
$\inprodcountc{j}{c}=0$ for all odd~$j$.

\begin{lemma}\label{lem:toobig}
If $\ccfixed$ is a minimal-weight representative of the class $[\ccfixed]\in
C/\gencodeshell{\minCoz{n}}{C}$ and $c\in \latshell{\minCoz{n}}{C}$,
we have the inequality
$$
\wt(\isectcode{c}{\ccfixed})\leq \frac{\minCoz{n}}{2}.
$$
\end{lemma}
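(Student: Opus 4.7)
The plan is to prove this inequality by directly exploiting the defining minimality property of~$\ccfixed$ as a coset representative, combined with the standard inclusion/exclusion identity for Hamming weights of sums in~$\F_2^n$.

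First I would observe that, since $c \in \latshell{\minCoz{n}}{C}$, by the very definition of $\gencodeshell{\minCoz{n}}{C}$ as the subcode generated by minimal-weight words, we have $c \in \gencodeshell{\minCoz{n}}{C}$. Consequently $\ccfixed + c$ lies in the same coset as $\ccfixed$ modulo $\gencodeshell{\minCoz{n}}{C}$. The minimality assumption on $\ccfixed$ within its coset then gives the key inequality
\begin{equation*}
\wt(\ccfixed + c) \;\geq\; \wt(\ccfixed).
\end{equation*}

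Next I would invoke the elementary identity for binary weights, namely $\wt(a+b) = \wt(a) + \wt(b) - 2\wt(\isectcode{a}{b})$, valid for any $a,b \in \F_2^n$ (each coordinate where both $a$ and $b$ equal~$1$ contributes~$0$ to $a+b$ but had contributed $2$ to $\wt(a)+\wt(b)$). Applying this with $a = \ccfixed$ and $b = c$ yields
\begin{equation*}
\wt(\ccfixed) + \wt(c) - 2\wt(\isectcode{c}{\ccfixed}) \;\geq\; \wt(\ccfixed),
\end{equation*}
which simplifies to $2\wt(\isectcode{c}{\ccfixed}) \leq \wt(c) = \minCoz{n}$, giving the desired bound.

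This argument is essentially a one-step reduction, so I do not expect any real obstacle; the only points requiring a sentence of justification are that $c$ genuinely lies in $\gencodeshell{\minCoz{n}}{C}$ (immediate from the definition) and that the weight identity above holds over~$\F_2$ (a direct coordinate count). The lemma is the coding-theoretic analog of the standard step used in the lattice configuration arguments of \cite{Venkov:32,Ozeki:32,Ozeki:48,Kominers:56+72+96}, where a minimal representative $\dot v$ of a coset in $L_0^*/L_0$ satisfies $|\inprodl{v}{\dot v}| \leq \inprodl{v}{v}/2$ for $v$ a minimal vector — here the role of the inner product is played by the coordinate intersection $\wt(\isectcode{\cdot}{\cdot})$.
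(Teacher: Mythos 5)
Your proof is correct and is essentially the paper's own argument: both rest on the identity $\wt(\ccfixed+c)=\wt(\ccfixed)+\wt(c)-2\wt(\isectcode{c}{\ccfixed})$ together with the minimality of $\ccfixed$ in its coset modulo $\gencodeshell{\minCoz{n}}{C}$ (the paper merely phrases it as a contradiction rather than a direct inequality). Your explicit remark that $c\in\latshell{\minCoz{n}}{C}$ lies in $\gencodeshell{\minCoz{n}}{C}$, so that $\ccfixed+c\in[\ccfixed]$, is the one step the paper leaves implicit, and it is correct.
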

\begin{proof}
This follows quickly, because if
$\wt(\isectcode{c}{\ccfixed})>\inlinefrac{\minCoz{n}}{2}$,
then $[\ccfixed]$ contains a codeword $c + \ccfixed$ of weight
$$
\wt(c + \ccfixed)=\wt(c)+\wt(\ccfixed)-2\wt(\isectcode{c}{\ccfixed})<\wt(\ccfixed).
$$
This contradicts the minimality of $\ccfixed$ in $[\ccfixed]$.
\end{proof}

We now prove our configuration result for Type~II codes of lengths
$n=48$ and~$72$.  The corresponding results for the remaining values of~$n$
are presented in~\cite{Elkies+Kominers:configuration}
and~\cite{Kominers:thesis}.

\begin{theorem}\label{thm:48+72c}
If $C$ is an extremal Type~II code of length $n=48$ or~$72$, then
$$C=\gencodeshell{\minCoz{n}}{C}.$$
\end{theorem}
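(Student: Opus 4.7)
The plan is to argue by contradiction. Suppose $C_0 := \gencodeshell{\minCoz{n}}{C}$ is strictly contained in $C$, and choose a coset $[\ccfixed] \in C/C_0$ represented by a minimal-weight $\ccfixed \in C \setminus C_0$, with $w_0 := \wt(\ccfixed)$. Since every codeword of weight $\minCoz{n}$ already lies in $C_0$, we must have $w_0 > \minCoz{n}$. Lemma~\ref{lem:toobig} then gives, for each $c \in \codeshell{\minCoz{n}}{C}$,
$$
\wt(\isectcode{c}{\ccfixed}) \leq \minCoz{n}/2,
$$
and the self-duality of $C$ forces $\inprodc{c}{\ccfixed} = 0$ in $\F_2$, so $\wt(\isectcode{c}{\ccfixed})$ is even. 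Thus the intersection weight $j := \wt(\isectcode{c}{\ccfixed})$ takes at most four values $\{0,2,4,6\}$ for $n=48$ and at most five values $\{0,2,4,6,8\}$ for $n=72$. I would then introduce the unknowns
$$
N_j := \bigl|\{c \in \codeshell{\minCoz{n}}{C} \setsep \wt(\isectcode{c}{\ccfixed})=j\}\bigr|.
$$

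Next, I would extract linear equations in the $N_j$ from Corollary~\ref{cor:extremeassmat}. For both $n=48$ and $n=72$, $n \equiv 0 \bmod 24$, so $\cutoff{n}=5$, and the corollary yields
$$
\sum_{c \in \codeshell{\minCoz{n}}{C}} \dzoharmd{t}{\ccfixed}(c) = 0
\qquad (t \in \{1,2,3,4,5,7\}).
$$
By Propositions~\ref{prop:combform} and~\ref{prop:allzonals}, the value $\dzoharmd{t}{\ccfixed}(c)$ for $c$ of weight $\minCoz{n}$ depends only on $j$, giving a concrete rational function $P_t(j;w_0)$ whose coefficients I can read off from (\ref{eq:Qprop}) and (\ref{eq:Qdkv}). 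Regrouping the sum by $j$ converts each vanishing into a linear relation $\sum_j N_j P_t(j;w_0) = 0$. Together with the trivial count $\sum_j N_j = |\codeshell{\minCoz{n}}{C}|$ — an explicit constant read off the extremal weight enumerator of length $n$ — this gives seven linear equations in at most four (for $n=48$) or five (for $n=72$) unknowns $N_j$, an overdetermined system.

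The remainder of the plan is the explicit calculation. For $n=48$, I would invert the four equations at $t=0,1,2,3$ to express each $N_j$ as an affine function of $w_0$ and of the known total $|\codeshell{12}{C}|$, and then use the three remaining equations at $t=4,5,7$ to derive a contradiction; for $n=72$, I would analogously invert at $t=0,1,2,3,4$ to solve for the five $N_j$, and use the equations at $t=5,7$. The target is to show that for every admissible $w_0$ (a multiple of $4$ strictly greater than $\minCoz{n}$) either the overdetermined system becomes inconsistent or one of the $N_j$ is forced to be negative, yielding the desired contradiction. This mirrors in spirit the lattice arguments of~\cite{Ozeki:48} and~\cite{Kominers:56+72+96}.

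The main obstacle will be the bookkeeping in this last step: evaluating the entries $P_t(j;w_0)$ in closed form, inverting a concrete $4 \times 4$ or $5 \times 5$ matrix with entries depending polynomially on $w_0$, and verifying across the (finite) list of admissible $w_0$ that the overdetermining equations or the positivity constraints fail. The conceptual work is already done: the role that the $t\tfrac12$-design condition plays in the lattice proofs is taken over here by the additional degree-$(\cutoff{n}+2)$ vanishing in Corollary~\ref{cor:extremeassmat} — exactly the ingredient flagged in the concluding remarks of Section~\ref{sec:wtf_section} — and the zonal harmonic polynomial $\dzoharmd{t}{\ccfixed}$ furnishes the coding analogue of the zonal spherical harmonic invoked there.
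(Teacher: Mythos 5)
Your proposal is correct and follows essentially the same route as the paper: the same contradiction setup via a minimal-weight coset representative, the same unknowns $N_j$ constrained by Lemma~\ref{lem:toobig} and self-duality, and the same overdetermined linear system built from Corollary~\ref{cor:extremeassmat} (crucially including the degree-$(\cutoff{n}+2)$ vanishing) together with Proposition~\ref{prop:combform}. The paper executes the final step by exhibiting the determinant of the augmented system as an explicit rational function of $s=\wt(\ccfixed)$ with no integer roots, which is one concrete realization of the inconsistency you describe.
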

\begin{proof}
We consider the equivalence classes of $C/\gencodeshell{\minCoz{n}}{C}$
and assume for the sake of contradiction that there is some class
$[\ccfixed]\in C/\gencodeshell{\minCoz{n}}{C}$
with minimal-weight representative
$\ccfixed$ for which $\wt(\ccfixed)=s>\minCoz{n}$.

As $C$ is self-dual, we have
$\inprodcountc{j}{c}=0$ for all odd~$j$.
Additionally, by Lemma~\ref{lem:toobig},
we must have $\inprodcountc{2j'}{\ccfixed}=0$ for
$j' > \inlinefrac{\minCoz{n}}{4}$.
We now develop a system of equations in the
$$
\frac{\minCoz{n}}{4}+1
$$
variables $\inprodcountc{0}{\ccfixed},\inprodcountc{2}{\ccfixed},\ldots,
\inprodcountc{\inlinefrac{\minCoz{n}}{2}}{\ccfixed}$.
One such equation is
\begin{equation}
\label{eq:codesumup}
\inprodcountc{0}{\ccfixed}+\inprodcountc{2}{\ccfixed}+\cdots+
\inprodcountc{\inlinefrac{\minCoz{n}}{2}}{\ccfixed}=\wecoeff{\minCoz{n}}{C};
\end{equation}
Corollary~\ref{cor:extremeassmat} with $\cfixed = \ccfixed$
yields $\cutoff{n}+1$ more.  This yields a system of
$$
\cutoff{n}+2 >  \frac{\minCoz{n}}{4}+1
$$
equations in the variables $\inprodcountc{2j'}{\ccfixed}$
($0\leq j'\leq \inlinefrac{\minCoz{n}}{4}$).

For $n=48, 72$, the (extended) determinants of these inhomogeneous
systems are
\begin{gather}\label{eq:firstdetc1}
 2^{26}3^{5}5^{2}7^{1}11^{2}23^{2}43^{1}47^{1}\left(\frac{ 11 s^3-396
s^2+4906 s-20736}{(s-3) (s-2)^2 (s-1)^3 s^3}\right),\\
2^{42}3^{5}5^{2}7^{2}11^{2}13^{1}17^{3}23^{2}67^{2}71^{1}\left(\frac{
39 s^4-2600 s^3+67410 s^2-800440 s+3650496}{(s-4) (s-3)^2 (s-2)^3 (s-1)^4
s^4}\right),\label{eq:lastdetc1}
\end{gather}respectively\footnote{These determinants were computed
using the formula of Proposition~\ref{prop:combform}.  We omit the
equations obtained from the zonal spherical harmonic polynomials of
the largest degrees when there are more than $\frac{\minCoz{n}}{4}+2$
equations obtained by this method.}; these determinants must vanish,
as they are derived from overdetermined systems.
Since equations~\eqref{eq:firstdetc1} and \eqref{eq:lastdetc1}
have no integer roots $s$, we have reached a contradiction.
\end{proof}

\bibliographystyle{amsalpha}
\bibliography{bibliography/references}

   \appendix
   \section{Proof of Gleason's Theorems for Binary Codes}
      % this is the [Gleason] appendix

Let $G_{\rm I}$ be the subgroup of $\GL_2(\C)$ generated by
$\slmat{1}{0}{0}{-1}$ and $2^{-1/2}\slmat{1}{1}{1}{-1}$,
and let $G_{\rm II}$ be the subgroup of $\GL_2(\C)$ generated by
$\slmat{1}{0}{0}{i}$ and $2^{-1/2}\slmat{1}{1}{1}{-1}$.
We have seen, using \eqref{eq:GII} for the second generator,
that if $C$\/ is a binary code of Type~I
(respectively Type~II) then its weight enumerator $W_C$
is contained in the subring of $\C[x,y]$
invariant under linear substitutions with matrices in $G_{\rm I}$
(resp.\ $G_{\rm II}$).  Here we show that the $G_{\rm I}$ invariants
are generated by $x^2+y^2$ and
$\gleasondelta := x^2 y^2 (x^2-y^2)^2$, and the
$G_{\rm II}$ invariants
are generated by $\gleasonphi =  W_{e_8}(x,y) = x^8+14x^4y^4+y^8$
and $\gleasonxi = x^4y^4(x^4-y^4)^4$.  Note that these are consistent
with $G_{\rm I} \subset G_{\rm II}$
because $\gleasonphi = (x^2+y^2)^4 - 4 \gleasondelta$.

We first show that $G_{\rm I}$, and thus also $G_{\rm II}$, contains
the signed permutation subgroup of $\GL_2(\C)$,
which is isomorphic with the eight-element dihedral group
and is generated by $\slmat{1}{0}{0}{-1}$ and $\slmat{0}{1}{1}{0}$.
Indeed\footnote{
  In the coding context we could also show directly that $W_C$ is invariant
  under $\slmat{0}{1}{1}{0}$, that is, that $W_C(x,y) = W_C(y,x)$.
  Any binary linear code~$C$\/ satisfies $W_C(x,y) = W_C(y,x)$
  if and only if $C$ contains the \hbox{all-1s} vector~$\onevec$:
  in the forward direction, the number of weight~$n$ codewords is
  $W_C(0,1)$, while $W_C(1,0) = 1$ always; in the reverse direction,
  translation by $\onevec$ gives for each~$w$ a bijection between the 
  codewords of weight~$w$ and the codewords of weight $n-w$.
  But we noted already that a self-dual code,
  whether of Type~I or Type~II, contains~$\onevec$.
  }
$G_{\rm I} \ni \slmat{1}{0}{0}{-1} = \slmat{1}{0}{0}{i}^2$,
and we calculate that $\slmat{0}{1}{1}{0}$ is the conjugate of
$\slmat{1}{0}{0}{-1}$ by $2^{-1/2}\slmat{1}{1}{1}{-1}$.
Clearly a polynomial in $x,y$ is invariant under the four matrices
$\slmat{\pm1}{0}{0}{\pm1}$ if and only if it is a polynomial in
$x^2$ and~$y^2$.  To be invariant also under $\slmat{0}{1}{1}{0}$
it must be a symmetric polynomial in $x^2$ and~$y^2$.
Thus the invariants under this dihedral group are the polynomials in
$x^2+y^2$ and~$x^2 y^2$.

We can already find the $G_{\rm I}$-invariant subgroup.
Since the involution $2^{-1/2}\slmat{1}{1}{1}{-1}$ fixes $x^2+y^2$ and takes
$x^2 y^2$ to $(x^2-y^2)^2/4$, it follows that the weight enumerator
of a Type~I code is a polynomial in $x^2+y^2$,
$x^2 y^2 + (x^2-y^2)^2/4$, and $x^2 y^2 (x^2-y^2)^2/4$.
Using the identity $x^2 y^2 + (x^2-y^2)^2/4 = (x^2+y^2)^2/4$,
we dispense with the second of those three generators, and
recover Gleason's theorem for self-dual binary codes~$C$\/
(whether of Type~I or Type~II): the weight enumerator of such a code is
a polynomial in $x^2+y^2$ and $\gleasondelta$.
% Note that this is consistent with Theorem~\ref{thm:Gleason} because
% $\gleasonphi = (x^2+y^2)^4 - 4 \gleasondelta$ and
% $\gleasonxi = (x^2+y^2)^4 \gleasondelta^2$.

To find instead to $G_{\rm II}$ invariants,
we next adjoin the matrix $i \slmat{1}{0}{0}{1}$.  We first show
that this matrix is contained in the scalar subgroup of~$G_{\rm II}$.
We claim that the scalars in~$G_{\rm I}$ are the $8$-th roots of unity.
Any scalar matrix $\mu\slmat{1}{0}{0}{1}$ has determinant $\mu^2$,
and our generators of~$G_{\rm II}$ have determinants $i$ and $-1$, so
$\mu\slmat{1}{0}{0}{1} \in G_{\rm II}$ implies $\mu^8 = 1$.
All such $\mu$ appear because $G_{\rm II}$ contains
\begin{equation}
\left(2^{-1/2}\slmat{1}{1}{1}{-1} \slmat{1}{0}{0}{i}\right)^3
= 2^{-3/2} (2+2i) \slmat{1}{0}{0}{1}
= e^{\pi i / 4} \slmat{1}{0}{0}{1}.
\label{eq:3cycle}
\end{equation}
(The invariance of $W_C$ under $e^{\pi i / 4} \slmat{1}{0}{0}{1}$
already shows that $8 \mid n$.)
In particular $G_{\rm II}$ contains~$i\slmat{1}{0}{0}{1}$.  This
transformation fixes $x^2 y^2$ and takes $x^2+y^2$ to $-(x^2+y^2)$.
Hence the polynomials invariant under the signed permutation group
and $i\slmat{1}{0}{0}{1}$ are precisely the polynomials in
$x^2 y^2$ and $(x^2+y^2)^2$.

Let $Q_1 = (x^2+y^2)^2$, $Q_2 = -4 x^2 y^2$, and
$Q_3 = -(Q_1+Q_2) = - (x^2-y^2)^2$.  We next find elements of~$G_{\rm
II}$
that permute the $Q_j$.  One is
$\varsigma := e^{-3 \pi i / 4} \cdot 2^{-1/2}\slmat{1}{1}{1}{-1} \cdot \slmat{1}{0}{0}{i}$,
which is a \hbox{$3$-cycle} contained in~$G_{\rm II}$ by~\eqref{eq:3cycle}.
We calculate that $\varsigma$ permutes the $Q_j$ cyclically.
The other is the diagonal matrix
$e^{\pi i / 4} \slmat{1}{0}{0}{i}$, which takes $Q_2$ to itself and
$Q_1,Q_3$ to each other.  Thus the subring of $\C[x,y]$
invariant under the subgroup of~$G_{\rm II}$ generated by
signed permutations, $i\slmat{1}{0}{0}{1}$, $\varsigma$, and
$e^{\pi i / 4} \slmat{1}{0}{0}{i}$ consists of the polynomials in
$Q_1,Q_2,Q_3$ invariant under arbitrary permutations.
Since the three $Q_j$ are independent but for the relation
$Q_1 + Q_2 + Q_3 = 0$, the
invariant subring is generated by their elementary symmetric functions
of degrees $2$ and~$3$.  We calculate that these are
$$
Q_1 Q_2 + Q_3 Q_1 + Q_2 Q_3 = -\gleasonphi
\quad {\rm and}\quad
% Q_1 Q_2 Q_3 = 4 \gleasonpsi{12},
Q_1 Q_2 Q_3 = 4 \gleasonpsi{2},
$$
where $\gleasonpsi{2} := x^2 y^2 (x^4-y^4)^2$ is
the \hbox{degree-$12$} invariant of~\eqref{eq:psi}.
Thus the invariant subring is $\C[\gleasonphi,\gleasonpsi{2}]$.
Finally the scalar $e^{\pi i / 4}$ fixes $\gleasonphi$ and takes
$\gleasonpsi{2}$ to $-\gleasonpsi{2}$, so the subring of
$\C[\gleasonphi,\gleasonpsi{2}]$ invariant under $e^{\pi i / 4}$
is $\C[\gleasonphi,\gleasonpsi{2}^2]$.
Since $\gleasonpsi{2}^2 = \gleasonxi$, this proves that
any \hbox{$G_{\rm II}$-invariant} polynomial is contained in
is $\C[\gleasonphi,\gleasonxi]$.

While we proved only that $\C[\gleasonphi,\gleasonxi]$ contains
the invariant subring $\C[x,y]^{G_{\rm II}}$, we readily conclude that
$\C[\gleasonphi,\gleasonxi] = \C[x,y]^{G_{\rm II}}$ by verifying that
both $\gleasonphi$ and $\gleasonxi$ are invariant under~$G_{\rm II}$.
This can be checked either by direct computation of the action of
our generators $2^{-1/2}\slmat{1}{1}{1}{-1}$ and $\slmat{1}{0}{0}{i}$,
or by finding Type~II codes $C_n$ of length $n=8$ and $n=24$
such that $W_{C_8} = \gleasonphi$ and
$W_{C_{24}} = \gleasonphi^3 + \alpha \gleasonxi$
for some $\alpha \neq 0$.  We take for $C_8$ the extended Hamming code,
and for $C_{24}$ the extended Golay code or any of the other
indecomposable Type~II codes of length~$24$.

\end{document}